\newtheorem{theorem}{Theorem}[section]
\newtheorem{lemma}[theorem]{Lemma}
\newtheorem{corollary}[theorem]{Corollary}
\newtheorem{fact}[theorem]{Fact}
\newtheorem{proposition}[theorem]{Proposition}
\theoremstyle{definition}
\newtheorem{example}[theorem]{Example}
\newtheorem{remark}[theorem]{Remark}
\newtheorem{definition}[theorem]{Definition}
\numberwithin{equation}{section}
\DeclareMathOperator{\Aut}{Aut}
\DeclareMathOperator{\Hom}{Hom}
\DeclareMathOperator{\dcl}{dcl}
\DeclareMathOperator{\tp}{tp}
\DeclareMathOperator{\id}{id}
\DeclareMathOperator{\eq}{eq}
\DeclareMathOperator{\Th}{Th}
\DeclareMathOperator{\dom}{dom}
\DeclareMathOperator{\ran}{ran}
\title[Groupoid-cover correspondence]{Functoriality and uniformity in Hrushovski's groupoid-cover correspondence}
\author{Levon Haykazyan}
\address{Levon Haykazyan\\
University of Waterloo\\
Department of Pure Mathematics\\
200 University Avenue West\\
Waterloo, Ontario \  N2L 3G1\\
Canada}
\email{lhaykazyan@uwaterloo.ca}
\author{Rahim Moosa}
\address{Rahim Moosa\\
University of Waterloo\\
Department of Pure Mathematics\\
200 University Avenue West\\
Waterloo, Ontario \  N2L 3G1\\
Canada}
\email{rmoosa@uwaterloo.ca}
\date{\today}
\begin{document}

\begin{abstract}
The correspondence between definable connected groupoids in a theory $T$ and internal generalised imaginary sorts of $T$, established by Hrushovski in [``Groupoids, imaginaries and internal covers," {\em Turkish Journal of Mathematics}, 2012], is here extended in two ways:
First, it is shown that the correspondence is in fact an equivalence of categories, with respect to appropriate notions of morphism.
Secondly, the equivalence of categories is shown to vary uniformly in definable families, with respect to an appropriate relativisation of these categories.
Some elaboration on Hrushovki's original constructions are also included.
\end{abstract}

\maketitle

\setcounter{tocdepth}{1}
\tableofcontents

\section {Introduction}

\noindent
The notion of a definable set $X$ being {\em internal} to another definable set $Y$ has been of central significance in geometric stability theory, as part of the study of the fine structure of definable sets.
Roughly speaking, it means that $X$ is definably isomorphic to a definable set in $Y^{\eq}$, though it is crucial that the isomorphism may involve more parameters than those required to define $X$ and $Y$.
For example, in differentially closed fields ($\operatorname{DCF}_0$), we can take $Y$ to be the constant field and $X$ to be the set of solutions to the algebraic differential equation $x'=1$.
Then $X$ is definably isomorphic to $Y$ after we name any one solution to the equation.
This dependence on additional parameters is controlled by the {\em liason} or {\em binding group}, an interpretable group acting definably on $X$ and agreeing with the action of the group of automorphisms of the universe that fix $Y$ pointwise.
The existence and importance of the liason group was recognised already by Zilber~\cite{zilber} in the late nineteen seventies.
Poizat~\cite{poizat} realised that this theory of definable automorphism groups, when applied to $\operatorname{DCF}_0$, could be used to recover Kolchin's differential Galois theory.
Hrushovski gave the modern and full development of the subject, first in the context of stable theories and later in complete generality; in~\cite{udigeneral} the liason group is constructed from internality assuming only that $Y$ is {\em stably embedded}, i.e., that the full induced structure agrees with the $0$-definable induced structure on $Y$.

In~\cite{hrushovski}, Hrushovski addresses the question of how to describe $X$ from the point of view of the induced structure on $Y$.
The liason group lives naturally and without additional parameters in $(X, Y)^{\eq}$, but does not in general have a canonical manifestation in $Y^{\eq}$.
Hrushovski introduces  {\em liason groupoids}, establishing a bijective correspondence (up to a natural notion of equivalence) between definable connected groupoids in a theory $T$ and {\em internal covers} of $T$; i.e., expansions of $T$ that are internal to $T$ and in which $T$ is stably embedded.
In the above notation, $T$ is the theory of $Y^{\eq}$ and the internal cover is the theory of $(X,Y)^{\eq}$.
So, what is effected by Hrushovski's correspondence is an encoding of internal covers of $T$ as definable connected groupoids in $T$, in very much the same way that the sorts of $T^{\operatorname{eq}}$ are encoded in $T$ as definable equivalence relations.

In fact there is a small mistake in~\cite{hrushovski} so that the correspondence does not encompass all definable connected groupoids but only those satisfying the additional technical condition of {\em finite faithfulness}.\footnote{This mistake in~\cite[Theorem~3.2]{hrushovski} does not invalidate any of the other theorems of that paper, as they relate to contexts where finite faithfulness is automatic; namely when the theory is stable or when the automorphism groups in the groupoid are finite.}
This is explained and corrected in Section~\ref{section-covers-gpd} below,  where we also present a brief description of Hrushovski's constructions, elaborating on a few aspects.

This paper extends Hrushovski's work in two ways.
First, we show that his correspondence comes from an equivalence of categories.
There is a more or less natural way to make internal covers into a category: a morphism between two internal covers should be a function that is definable in a common cover.
This is made precise in~$\S$\ref{subsectcovmor}, but for example, a definable function between definable sets in $\operatorname{DCF}_0$ that are internal to the constants will give rise to a morphism between the corresponding internal covers of $\operatorname{ACF}_0$.
With this notion of morphism, isomorphic internal covers are precisely those that are equivalent in Hrushovski's sense.
On the other hand, it is not so clear what we should be regarding as morphisms between definable groupoids.
The answer, given in~$\S$\ref{subsectgpdmor}, takes a somewhat unexpected form.
In any case, we are able to show, in the rest of Section~\ref{section-functorial}, that Hrushovski's correspondence is an equivalence of these categories.

Secondly, we pursue a suggestion in~\cite{hrushovski} and address the question of uniformity in definable families.
Allowing connected groupoids to vary in a definable family amounts to dropping the assumption of connectedness, and allowing internal covers to vary definably amounts to replacing internality with {\em $1$-analysability}.
We prove that Hrushovski's constructions -- of the liason groupoid associated to an internal cover and of the internal cover associated to a finitely faithful definable connected groupoid -- do vary uniformly with the parameters, but only under the (necessary) additional assumption that there is no significant interaction amongst the varying internal covers (called {\em independence of fibres} below).
This is done in Section~\ref{section-uniformity}.

\bigskip
\section{Preliminaries on covers}
\label{coversection}
\noindent
We review in this brief section the notion of a cover of a theory.
Details on the various model-theoretic notions that come up can be found in~\cite{tentziegler}.

Suppose $M$ is a (multi-sorted) structure.
Throughout this paper, {\em definable} means definable with parameters, unless stated otherwise.
A $0$-definable set $X$ is said to be {\em stably embedded in $M$} if for every $n>0$, every definable subset of $X^n$ is definable with parameters from $X$.
A {\em stably embedded reduct} of $M$ is a reduct $N$ with the property that every subset of a finite cartesian product of sorts in $N$ that is definable in $M$ is definable in $N$.
So $X$ is stably embedded in $M$ if and only if the structure induced on $X$ by the $0$-definable sets of $M$ is a stably embedded reduct of $M$.
Here are some other useful characterisations; see the Appendix of~\cite{zoe-udi} for details.

\begin{fact}
\label{seset}
Suppose $M$ is sufficiently saturated and $X$ is a $0$-definable set in $M$.
The following are equivalent:
\begin{itemize}
\item[(i)]
$X$ is stably embedded.
\item[(ii)]
For any tuple $a$ of small length from $M$ there is a small subset $B\subseteq X$ such that $\tp(a/B)\vdash\tp(a/X)$.
\item[(iii)]
If $a$ and $b$ are tuples from $M$ of small length with $\tp(a/X)=\tp(b/X)$ then there exists $\sigma\in\Aut(M)$ that fixes $X$ pointwise and such that $\sigma(a)=b$.
\end{itemize}
\end{fact}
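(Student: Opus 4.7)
The plan is to establish the cycle (i) $\Rightarrow$ (ii) $\Rightarrow$ (iii) $\Rightarrow$ (i).

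For (i) $\Rightarrow$ (ii), I would enumerate the $L$-formulas $\varphi(x, y_1,\dots,y_k)$ with $x$ of the sort of $a$ and $y_1,\dots,y_k$ from the sorts of $X$. For each such $\varphi$, stable embeddedness yields a formula $\psi_\varphi$ and a tuple $c_\varphi$ from $X$ such that $\varphi(a,\bar y)$ and $\psi_\varphi(\bar y, c_\varphi)$ carve out the same subset of $X^k$. Setting $B = \bigcup_\varphi c_\varphi$, which is small because there are only small-many $L$-formulas, the sentence
\[
\forall y_1,\dots,y_k \in X\,\bigl(\varphi(x,\bar y) \liff \psi_\varphi(\bar y, c_\varphi)\bigr)
\]
lies in $\tp(a/B)$. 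Hence any $a' \models \tp(a/B)$ agrees with $a$ on every instance $\varphi(\cdot,\bar c)$ with $\bar c$ from $X$, and varying $\varphi$ gives $\tp(a/B) \vdash \tp(a/X)$.

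For (ii) $\Rightarrow$ (iii), I would construct $\sigma \in \Aut(M)$ fixing $X$ pointwise with $\sigma(a) = b$ by a transfinite back-and-forth. At each stage, maintain a partial elementary map $f$ extending $\id_X \cup \{(a,b)\}$ with $\tp(\bar a/X) = \tp(f(\bar a)/X)$, where $\bar a$ enumerates $\dom f$. To adjoin a new $c \in M$ to $\dom f$: apply (ii) to the tuple $c\bar a$ to obtain a small $B \subseteq X$ with $\tp(c\bar a/B) \vdash \tp(c\bar a/X)$; since $f$ fixes $B$ pointwise, $\tp(\bar a/B) = \tp(f(\bar a)/B)$, so the saturation of $M$ yields $d \in M$ with $\tp(d\,f(\bar a)/B) = \tp(c\bar a/B)$. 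Then (ii) upgrades this equality over $B$ to equality over $X$, so $f \cup \{(c,d)\}$ is the required extension; a symmetric step adjoins elements to $\ran f$.

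For (iii) $\Rightarrow$ (i), let $Y = \varphi(a, X^n) \subseteq X^n$ be $M$-definable (with parameters absorbed into $a$). For any $a'$ realising $\tp(a/X)$, the automorphism supplied by (iii) fixes $X^n$ pointwise, so $\varphi(a', X^n) = Y$. Writing $p = \tp(a/X)$, the partial type
\[
p(x) \cup p(x') \cup \bigl\{\exists \bar y \in X^n\,\neg(\varphi(x,\bar y) \liff \varphi(x',\bar y))\bigr\}
\]
is therefore inconsistent, and compactness provides $\theta(x) \in p$ (over finitely many parameters from $X$) such that $\theta(x) \wedge \theta(x')$ forces $\varphi(x, \bar y)$ and $\varphi(x', \bar y)$ to agree throughout $X^n$. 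The set $\{\bar y \in X^n : \exists x\,(\theta(x) \wedge \varphi(x,\bar y))\}$ is then $X$-definable and equals $Y$.

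The principal technical nuisance lies in the back-and-forth step of (ii) $\Rightarrow$ (iii): the small $B$ arising at each stage must remain strictly below the saturation cardinal of $M$, which requires standard transfinite bookkeeping and the interpretation of ``sufficiently saturated'' as strongly $\kappa$-homogeneous for suitably large $\kappa$.
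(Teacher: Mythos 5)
Your arguments for (i)~$\Rightarrow$~(ii) and (ii)~$\Rightarrow$~(iii) are correct and standard; note for comparison that the paper does not prove this Fact at all but cites the appendix of Chatzidakis--Hrushovski, so the proposal has to stand on its own. One small caveat on (ii)~$\Rightarrow$~(iii): what the back-and-forth really requires is that $M$ be saturated in its own cardinality, so that along a transfinite enumeration of $M$ of length $|M|$ every initial segment of the accumulated domain (beyond $X$) stays small; strong $\kappa$-homogeneity is not the right substitute, since the partial maps you extend all contain $\id_X$ and $X$ is large. This is the usual monster-model convention and is harmless.

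The genuine gap is in (iii)~$\Rightarrow$~(i), at the step asserting that $p(x)\cup p(x')\cup\{\exists \bar y\in X^n\,\neg(\varphi(x,\bar y)\leftrightarrow\varphi(x',\bar y))\}$ ``is therefore inconsistent.'' What you have actually shown is that this partial type has no realization \emph{in $M$}. That is far weaker than inconsistency: the type has parameter set $X$, which has the same cardinality as $M$, so saturation does not convert non-realization in $M$ into genuine inconsistency, and compactness may only be applied to an inconsistent type. Worse, the only consequence of (iii) that your argument extracts --- that any two realizations in $M$ of $p=\tp(a/X)$ have the same $\varphi$-trace on $X(M)^n$ --- is a tautology, true in every structure: whether $\varphi(x,\bar c)$ holds for $\bar c\in X^n$ is literally part of the type over $X$. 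So (iii) is never used in any essential way, and if the argument were valid it would prove that \emph{every} $0$-definable set in every saturated structure is stably embedded. This is false: in the saturated random bipartite graph $(S,X,E)$, with $a\in S$ and $\varphi=E$, every finite fragment of your type is satisfiable in $M$ itself (by the extension axioms one finds $x,x'\in S$ agreeing with $a$ on the finitely many named points of $X$ but disagreeing at some other point of $X(M)$), so the type is consistent, even though all realizations of $p$ in $M$ have equal traces and $E(a,X)$ is not $X$-definable. Your compactness step, run there, would manufacture an $X$-definition of $E(a,X)$ --- a contradiction.

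The constructive remark is that your compactness argument is exactly right once the parameter set is small: replacing $p$ by $\tp(a/B)$, where $B\subseteq X$ is small with $\tp(a/B)\vdash\tp(a/X)$, the same reasoning gives a formula $\theta\in\tp(a/B)$ and hence an $X$-definition of $Y$; in other words you have proved (ii)~$\Rightarrow$~(i), with no automorphisms needed. So the proposal establishes (i)~$\Rightarrow$~(ii)~$\Rightarrow$~(iii) and (ii)~$\Rightarrow$~(i), but the cycle is not closed: the genuinely hard content of the Fact is a route from (iii) back to (i) or (ii), which must use the existence of automorphisms of all of $M$ in an essential way, and this is precisely what the cited Chatzidakis--Hrushovski appendix supplies.
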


\begin{fact}
\label{sereduct}
Suppose $M$ is sufficiently saturated and $N$ is a reduct of $M$.
The following are equivalent:
\begin{itemize}
\item[(i)]
$N$ is stably embedded.
\item[(ii)]
The restriction map $\Aut(M)\to\Aut(N)$ is surjective.
We denote the kernel of this map by $\Aut(M/N)$.
\end{itemize}
\end{fact}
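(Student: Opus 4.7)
The plan is to reduce both directions to the basic principle that, in a sufficiently saturated structure, equality of types corresponds to existence of an automorphism. Let $X$ denote the union of the sorts of $N$, a $0$-definable set in $M$, and observe that $N$ is as saturated as $M$ (a reduct has fewer formulas, hence fewer types). The bridging principle is
\[
(\star)\quad \text{for tuples } a, b \text{ from } X,\quad \tp_N(a)=\tp_N(b) \iff \tp_M(a)=\tp_M(b),
\]
together with its parameterized version over tuples from $X$. The ``$\Leftarrow$'' direction is automatic since $N$-formulas are $M$-formulas; the content is the converse, which is equivalent to condition (i) via the standard duality between definable sets and type classes: an $M$-definable subset $D \subseteq X^n$ is $N$-definable if and only if it is a union of $N$-type classes, which happens precisely when $N$-types determine $M$-types on $X$.

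For (i) $\Rightarrow$ (ii): given $\tau \in \Aut(N)$, enumerate $X$ as $\bar x$ and set $\bar y = \tau(\bar x)$. Then $\tp_N(\bar x) = \tp_N(\bar y)$, so by $(\star)$ also $\tp_M(\bar x) = \tp_M(\bar y)$. Strong homogeneity of $M$ yields $\sigma \in \Aut(M)$ with $\sigma(\bar x) = \bar y$, that is, $\sigma|_X = \tau$.

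For (ii) $\Rightarrow$ (i): to establish $(\star)$, suppose $\tp_N(a) = \tp_N(b)$ for tuples $a, b$ from $X$. By saturation of $N$, there is $\tau \in \Aut(N)$ with $\tau(a) = b$; lifting via (ii) to $\sigma \in \Aut(M)$ gives $\sigma(a) = b$, whence $\tp_M(a) = \tp_M(b)$. The parameterized version runs identically over tuples of $X$-parameters, and the absorption of $M$-parameters lying outside $X$ is controlled by stable embeddedness of $X$ as a set, as encoded in Fact~\ref{seset}.

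The main obstacle is the large cardinality of $X$ in the forward direction: the enumeration $\bar x$ has length as large as $|M|$, so invoking strong homogeneity to extend the partial $M$-elementary map $\tau$ to a full automorphism $\sigma$ requires $M$ to be saturated and homogeneous in a cardinal strictly above $|X|$. This is the intended meaning of ``sufficiently saturated'' here, and it is the sole nontrivial ingredient beyond the dictionary between types, invariant sets, and automorphism orbits.
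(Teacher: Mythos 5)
The paper does not actually prove Fact~\ref{sereduct}; it quotes it from the Appendix of~\cite{zoe-udi}. So your proposal has to stand on its own, and it does not: both directions break down, and the root cause is the same, namely that your bridging principle $(\star)$ is \emph{not} equivalent to stable embeddedness. For finite tuples, $(\star)$ says (by compactness, via the type-space argument you allude to) exactly that every $M$-$0$-definable relation on $X$-tuples is $0$-definable in $N$, i.e.\ that $N$ carries the \emph{full induced structure} from $M$. That is a condition about $0$-definability, whereas condition (i) is about definability with \emph{arbitrary} parameters from $M$; the two are independent. Concretely: let $M$ be an infinite set with one named constant $c$ and $N$ its reduct to pure equality. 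Then (i) holds (every $M$-definable set becomes $N$-definable once $c$ is allowed as a parameter), but $(\star)$ fails, since $\tp_N(c)=\tp_N(b)$ for every $b$ while $\tp_M(c)\neq\tp_M(b)$. (This example also shows that (ii) fails while (i) holds, so the Fact as literally stated tacitly assumes that $N$ carries the induced structure -- this is how it is formulated in~\cite{zoe-udi}; your $(\star)$ is that tacit hypothesis, not a reformulation of (i).) In the opposite direction, in a saturated model of DLO with a dense-codense predicate $P$, the structure induced on $P$ is pure DLO, so $(\star)$ holds automatically, yet $P$ is not stably embedded: for $a\notin P$ the trace $\{x\in P:x<a\}$ is not definable with parameters from $P$.

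The second, independently fatal, problem is the homogeneity step in your (i)~$\Rightarrow$~(ii) argument. You concede it needs $M$ to be saturated and homogeneous in a cardinal strictly above $|X|$; but $X$ is a union of sorts of $M$, and $\kappa$-saturation forces every infinite sort to have size at least $\kappa$, so in every nontrivial case $|X|=|M|$, and no structure is saturated or homogeneous above its own cardinality (``sufficiently saturated'' here means saturated in its own cardinality). Moreover the step is not merely unjustified but false, and your argument proves too much: it uses (i) only through $(\star)$, and $(\star)$ holds automatically whenever $N$ is the induced structure, so if the argument were valid it would show that \emph{every} automorphism of the induced structure on \emph{any} $0$-definable set lifts to $\Aut(M)$ -- contradicted by the dense-codense pair, where any $\tau\in\Aut(P,<)$ preserves all $M$-$0$-definable relations on $P$ (by quantifier elimination these reduce to $<$ and $=$), so the enumerations $\bar x$ of $P$ and $\tau(\bar x)$ have equal $M$-types, yet not every such $\tau$ extends. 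Equality of types of tuples of full length $|M|$ simply does not yield automorphisms. This is precisely where stable embeddedness must enter: its role, in the form of Fact~\ref{seset}(ii), is to replace types over the large set $X$ by types over \emph{small} subsets of $X$, which is what makes a back-and-forth construction of an extension $\sigma\supseteq\tau$ possible; your proof never invokes (i) in that form. The same omission undoes your (ii)~$\Rightarrow$~(i) direction: deriving $(\star)$ from (ii) is fine, but $(\star)$ only controls parameters from $X$, and your appeal to Fact~\ref{seset} to ``absorb'' parameters lying outside $X$ is circular, since establishing one of the equivalent conditions of that Fact (for instance that tuples with equal types over $X$ are $\Aut(M/X)$-conjugate) from (ii) is exactly the nontrivial content. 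The proof in the Appendix of~\cite{zoe-udi} routes both directions through the small-basis condition~\ref{seset}(ii) and back-and-forth constructions; any correct proof must do something of that kind.
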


That we have worked with $0$-definable sets is not so important.
If $A\subset M$ is a small set of parameters, and $X$ is $A$-definable, then $X$ is said to be {\em stably embedded over $A$} if it is stably embedded in $M_A$, the structure obtained by naming the elements of $A$ as new constants.

\begin{definition}[Hrushovski~\cite{hrushovski}]
Let $T$ be a complete multi-sorted first order theory with elimination of
imaginaries in a language $L$.
Consider an expansion $L'$ of $L$.
Let $T'$ be a complete $L'$-theory that extends $T$.
We say that $T'$ is a {\em cover} of~$T$ if in every model of~$T'$ the reduct to $L$ is stably embedded.
\end{definition}

Note that if $L'$ does not involve new sorts then any cover of $T$ in $L'$ is a definitional expansion of $T$.
So we are really interested in expansions involving new sorts.

As is the tradition in model theory, we fix a monster model $\mathbb U$ of $T$
and work inside $\mathbb U$. Accordingly we call an $L'$-expansion $\mathbb U'$
a {\em cover} of $\mathbb U$ if it is saturated and $\mathbb U$ is stably
embedded in $\mathbb U'$.
This is an abuse of notation as two such expansions $\mathbb U_1$ and $\mathbb U_2$ of $\mathbb U$ may be elementarily equivalent and therefore determine the same cover of $T$.
But by saturation this would mean there is an isomorphism $\sigma:\mathbb U_1\to\mathbb U_2$.
Stable embeddability implies,  by Fact~\ref{sereduct}(ii), that $\sigma|_{\mathbb U}$ extends to an automorphism $\alpha$ of $\mathbb U_2$, and so $\alpha^{-1}\sigma$ yields an isomorphism that is the identity on $\mathbb U$.
That is,  $\mathbb U_1$ and $\mathbb U_2$  give the same cover of $T$ if and only if they are isomorphic over $\mathbb U$.

In later sections we will want to consider unions of covers.
Suppose $\mathbb U_1=(\mathbb U,(S_{1j})_{j\in J_1},\dots)$ and $\mathbb U_2=(\mathbb U,(S_{2j})_{j\in J_2},\dots)$ are covers of $\mathbb U$, where $(S_{ij})_{j\in J_i}$ are the new sorts of $\mathbb U_i$ for $i=1,2$.
Then  $\mathbb U_1\cup\mathbb U_2$ is by definition the expansion of $\mathbb U$ that has new sorts $(S_{ij}:i=1,2, j\in J_i)$ and whose structure is as dictated by $\mathbb U_1$ and $\mathbb U_2$ on the appropriate sorts.
The language in which this structure is considered is $L$ together with the disjoint union of $L_1\setminus L$ and $L_2\setminus L$. 

\begin{lemma}
\label{union}
If $\mathbb U_1$ and $\mathbb U_2$ are covers of $\mathbb U$ then $\mathbb U_1\cup\mathbb U_2$ is a cover of $\mathbb U_1$ and $\mathbb U_2$, and $\operatorname{Th}(\mathbb U_1\cup\mathbb U_2)=\operatorname{Th}(\mathbb U_1)\cup \operatorname{Th}(\mathbb U_2)$.
\end{lemma}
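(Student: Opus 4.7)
The plan is to verify the two assertions in the statement by leveraging the automorphism characterisation of stable embedding given in Fact~\ref{sereduct}(ii). First I would address the saturation subtlety: the literal union $\mathbb U_1\cup\mathbb U_2$ need not be as saturated as each $\mathbb U_i$, so I would pass to a sufficiently saturated elementary extension, noting that stable embedding of a reduct is preserved under elementary extensions. Once this is done, the real content of the lemma lies in establishing stable embedding of $\mathbb U_1$ (and symmetrically $\mathbb U_2$) inside the combined structure.

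For stable embedding, I would apply Fact~\ref{sereduct}(ii) directly: given any $\sigma_1\in\Aut(\mathbb U_1)$, its restriction $\sigma_1|_{\mathbb U}$ is an automorphism of $\mathbb U$, which by stable embeddedness of $\mathbb U$ in $\mathbb U_2$ lifts to some $\sigma_2\in\Aut(\mathbb U_2)$. The combined map $\sigma_1\cup\sigma_2$ is well defined on $\mathbb U_1\cup\mathbb U_2$ because the two lifts agree on $\mathbb U$. The key observation making this into an automorphism is the language convention emphasised just before the lemma: since $L_1\setminus L$ and $L_2\setminus L$ are taken disjointly and no basic symbol involves sorts from both the $\mathbb U_1$-side and the $\mathbb U_2$-side simultaneously (except via $\mathbb U$), preservation of $L_i\setminus L$-symbols by $\sigma_i$ and of $L$-symbols by either $\sigma_i$ suffices.

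For the theory equality, one inclusion is trivial since $\mathbb U_1$ and $\mathbb U_2$ are reducts of $\mathbb U_1\cup\mathbb U_2$. The other amounts to showing $\Th(\mathbb U_1)\cup\Th(\mathbb U_2)$ is complete, for which I would give a back-and-forth. Let $M,N$ be sufficiently saturated models of the common theory. Their $L_i$-reducts are saturated models of the complete theory $\Th(\mathbb U_i)$, hence isomorphic via some $\phi_i:M|_{L_i}\to N|_{L_i}$. The restrictions $\phi_1|_{\mathbb U}$ and $\phi_2|_{\mathbb U}$ need not agree, but $(\phi_1|_{\mathbb U})\circ(\phi_2|_{\mathbb U})^{-1}$ is an automorphism of $N|_L$, which by stable embeddedness of $N|_L$ in $N|_{L_2}$ lifts to some $\tau\in\Aut(N|_{L_2})$; replacing $\phi_2$ by $\tau\circ\phi_2$ makes the two restrictions coincide. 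The merged map $\phi_1\cup\phi_2$ is then an isomorphism in the union language, again by the absence of cross-language symbols.

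The main obstacle, and the genuinely new ingredient over the preceding facts, is exactly this language-level observation that $\sigma_1\cup\sigma_2$ and $\phi_1\cup\phi_2$ are honest isomorphisms: everything else is bookkeeping with saturation and routine applications of Fact~\ref{sereduct}.
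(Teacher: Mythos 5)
Your core technique---merging the two maps after correcting one of them by a lift, via Fact~\ref{sereduct}(ii), of an automorphism of $\mathbb U$---is the right one, and your treatment of the theory equality and of automorphism extension matches the paper's. But there is a genuine gap at the outset: you assert that the literal union $\mathbb U_1\cup\mathbb U_2$ ``need not be as saturated as each $\mathbb U_i$'' and propose to sidestep this by passing to a saturated elementary extension. That dodge is not available. In this paper a cover of $\mathbb U_i$ at the level of monster models is \emph{by definition} a saturated expansion in which $\mathbb U_i$ is stably embedded, so saturation of the union itself is part of what the lemma asserts; establishing properties of some elementary extension proves a statement about different structures. Worse, your stable-embeddedness step silently needs that saturation anyway: the implication (ii)$\Rightarrow$(i) of Fact~\ref{sereduct} is stated for sufficiently saturated ambient structures and fails without that hypothesis (a rigid structure with an undefinable predicate on a reduct sort satisfies (ii) vacuously but not (i)). So knowing that every $\sigma_1\in\Aut(\mathbb U_1)$ extends to $\mathbb U_1\cup\mathbb U_2$ yields stable embeddedness of $\mathbb U_1$ only after the union is known to be saturated.

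The irony is that your own merging argument closes the gap, and this is exactly what the paper does. Take a saturated model $\mathbb V$ of $\Th(\mathbb U_1)\cup\Th(\mathbb U_2)$ of the same cardinality as the $\mathbb U_i$ (such a model exists under the paper's monster-model conventions); its reducts $\mathbb V_1,\mathbb V_2$ are saturated models of the complete theories $\Th(\mathbb U_1),\Th(\mathbb U_2)$, so there are isomorphisms $\sigma_i:\mathbb U_i\to\mathbb V_i$. After replacing $\sigma_2$ by $\sigma_2\alpha$, where $\alpha\in\Aut(\mathbb U_2)$ lifts $(\sigma_2|_{\mathbb U})^{-1}\sigma_1|_{\mathbb U}$, the map $\sigma_1\cup\sigma_2$ is an isomorphism from $\mathbb U_1\cup\mathbb U_2$ onto $\mathbb V$. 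Hence the union is itself saturated, and the same single isomorphism delivers the theory equality. With that step restored---applied to the union rather than only to a pair of abstract saturated models---the rest of your argument goes through as written.
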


\begin{proof}
A saturated (in the same cardinality as $\mathbb U_1,\mathbb U_2$) model of $\operatorname{Th}(\mathbb U_1)\cup \operatorname{Th}(\mathbb U_2)$ will be of the form $\mathbb V_1\cup\mathbb V_2$ with $\mathbb V_i$ a saturated model of $\operatorname{Th}(\mathbb U_i)$.
We therefore have isomorphisms $\sigma_i:\mathbb U_i\to\mathbb V_i$.
Now $(\sigma_2|_{\mathbb U})^{-1}\sigma_1|_{\mathbb U}\in\Aut(\mathbb U)$, and as $\mathbb U$ is stably embedded in $\mathbb U_2$ we can extend it to an automorphism $\alpha$ of $\mathbb U_2$ by Fact~\ref{sereduct}(ii).
Replacing $\sigma_2$ by $\sigma_2\alpha$, we may therefore assume that $\sigma_1$ and $\sigma_2$ agree on $\mathbb U$.
So $\sigma_1\cup\sigma_2$ is an isomorphism from $\mathbb U_1\cup\mathbb U_2$ to $\mathbb V_1\cup\mathbb V_2$.
This shows that $\mathbb U_1\cup\mathbb U_2$ is saturated and $\operatorname{Th}(\mathbb U_1\cup\mathbb U_2)=\operatorname{Th}(\mathbb U_1)\cup \operatorname{Th}(\mathbb U_2)$.

It remains to show that each $\mathbb U_i$ is stably embedded in $\mathbb U_1\cup\mathbb U_2$.
Suppose $\sigma\in\Aut(\mathbb U_1)$.
Then $\sigma|_{\mathbb U}\in\Aut(\mathbb U)$ and hence extends to an automorphism $\sigma'$ of $\mathbb U_2$.
Then $\sigma\cup\sigma'\in\Aut(\mathbb U_1\cup\mathbb U_2).$
A similar argument shows that every automorphism of $\mathbb U_2$ extends to $\mathbb U_1\cup\mathbb U_2$.
\end{proof}

\bigskip
\section{Internal covers and definable groupoids}
\label{section-covers-gpd}

\noindent
This section is primarily a survey of the correspondence that Hrushovski established in the first few sections of~\cite{hrushovski} between definable connected groupoids and internal covers.
We take this opportunity, however, to correct an error in the published statement and proof of that correspondence.
We have also streamlined some of the arguments and elaborated on some of the ideas.
Our presentation is largely self-contained, and we have made an effort to be explicit about where we have diverged from Hrushovski's notations and conventions.

We will focus on covers which involve only a finite increase in the language.

\begin{definition}
A cover $T'$ of $T$ is said to be in a language that is {\em finitely generated} over $L$ if there exists a sublanguage $L'_\circ\subseteq L'$ such that $L_\circ'\setminus L$ is finite and every $L'$-formula is equivalent modulo $T'$ to an $L'_\circ$-formula.
\end{definition}

We will be interested in {\em internal} covers $\mathbb U'$ of $\mathbb U$.
Since $T$ eliminates imaginaries this can be characterised by saying that each new sort $S$ of $\mathbb U'$ is definably isomorphic (over possibly additional parameters) with a definable set in $\mathbb U$.

Our primary focus is on internal covers that involve only one new sort.

\begin{definition}[Hrushovski~\cite{hrushovski}]
An {\em internal generalised imaginary sort} of $\mathbb U$ is an internal cover $\mathbb U'$ with only one new sort $S$, such that the language of $\mathbb U'$ is finitely generated over the language of $\mathbb U$.
\end{definition}

There is a canonical example of such internal covers.

\begin{example}
Let $\mathbb U=(G,\cdot,e)$ be a group and $S$ a principal homogeneous space for $G$ with the action denoted by $\rho:G\times S\to S$.
Then $\mathbb U' = (G,S,\cdot,e,\rho)$ is an internal cover with new sort $S$ witnessed by fixing any $a\in S$ and considering the definable bijection $\rho_a:G\to S$ given by $g\mapsto\rho(g,a)$.

If we fix $a\in S$ then we have another, $a$-definable, action of $G$ on $S$: each $g\in G$ gives rise to the definable bijection $\alpha_g:S\to S$ given by $\alpha_g(s)= \rho_a((\rho_a)^{-1}(s)g^{-1})$.
Note that $\alpha_g \cup \id_G$ preserves $\rho$ and so is an automorphism of $\mathbb U'$.
The correspondence $g \mapsto \alpha_g\cup \id_G$ is an isomorphism between $G$ and $\Aut(\mathbb U'/\mathbb U)$, as groups acting on $S$.
\end{example}

As is well known, this phenomenon occurs under very broad conditions: if $\mathbb U'$ is an internal generalised imaginary sort then $\Aut(\mathbb U'/\mathbb U)$ together with its action on $S$ is interpretable in $\mathbb U'$.
It is called the liason group.
However, in general we need parameters to realise this group in $\mathbb U$.
(In the above example parameters were not needed to define the group, just to define the action).
The parameter-free construction of Hrushovski~\cite{hrushovski} gives instead a definable liason {\em groupoid} in $\mathbb U$.
Moreover, this groupoid captures precisely the internal cover $\mathbb U'$.

A groupoid $\mathcal G$ is a category where all morphisms are invertible. It is
called {\em connected} if there is a morphism between any two objects. If $a, b$
are objects in $\mathcal G$, then we denote by $\Hom_{\mathcal G}(a, b)$ the set
of morphisms from $a$ to $b$ and also denote $\Aut_{\mathcal G}(a) =
\Hom_{\mathcal G}(a, a)$. If $\mathcal G$ is clear, we may skip it in the subscript.
Note that if $\mathcal G$ is connected then all these automorphism groups are isomorphic.

A {\em definable groupoid} consists of a definable family $(O_i)_{i \in I}$ of definable sets and
a definable family $(f_m)_{m \in M}$ of definable bijections between them, closed under composition and taking inverses.
It is worth being more explicit here.
There are definable sets~$I$, which encodes the set of objects, and surjective definable $M\to I\times I$, which encodes the set of morphisms.
There is a definable set $O$ and a surjective definable map $\pi : O \to I$
such that $O_i = \pi^{-1}(i)$, and a definable subset $f \subseteq M\times_{I\times I}(O \times O)$ such that $f_m = \{(x, y) \in O^2 : (m,x, y) \in f\}$ is the graph of a bijection
between $O_i$ and $O_j$ where $m$ lives over $(i,j)$.  Further we require that
distinct $m \in M$ define distinct bijections. The groupoid structure on $(I,
M)$ (i.e. the composition of two morphism, or the domain or the range of a
morphism) is then definable from this data.
We denote by $M(i,j)$ the fibre of $M\to I\times I$ above $(i,j)$, so that $(f_m)_{m\in M(i,j)}$ are the morphisms from $O_i$ to $O_j$.

\begin{remark}
Our terminology is slightly different from that of Hrushovski~\cite{hrushovski}. There the
structure $(I, M)$ (together with functions of partial composition, domain and
range) is called a definable groupoid. Then he considers a faithful definable
functor from $(I, M)$ to the category of definable sets and definable functions.
This gives rise to $(O_i, f_m)_{i \in I, m \in M}$ which he called a {\em
concrete definable groupoid}.
So all our definable groupoids are concrete.
This has the slightly awkward consequence that whereas for Hrushovski a definable groupoid with one object is nothing more than a definable group, for us it is a definable group action.
\end{remark}

We present below Hrushovski's theorem~\cite[Proposition~2.5]{hrushovski} that associates a liason groupoid to an internal cover.
We include a proof here mostly for the sake of completeness; ours differs superficially from Hrushovski's in that we do not go via $*$-definable groupoids but rather use finite-generatedness of the language to construct directly a definable groupoid.

\begin{theorem}[Hrushovski]
\label{groupoid-construction}
Let $\mathbb U' = (\mathbb U, S,\dots)$ be an internal generalised imaginary sort of~$\mathbb U$.
There is a $0$-definable
connected groupoid $\mathcal G'$ in $(\mathbb U')^{\eq}$ such that:
\begin{itemize}
\item The objects of $\mathcal G'$ are definable sets in $\mathbb U$ together
with $S$.\\
In particular, the restriction $\mathcal G$ of $\mathcal G'$ to $\mathbb U$ -- i.e., the full
subcategory generated by all objects excepts $S$ -- is a $0$-definable connected groupoid
in $\mathbb U$.
It is called the {\em liason groupoid} of $\mathbb U'$.
\item $\Aut_{\mathcal G'}(S)$, together with its action on $S$, is isomorphic to $\Aut(\mathbb U'/\mathbb U)$.
\end{itemize}
\end{theorem}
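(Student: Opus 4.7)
The plan is to build $\mathcal{G}'$ directly from a single $L'$-formula parameterising bijections between $\mathbb{U}$-definable sets and $S$, and then use stable embeddedness to push the combinatorial data into $\mathbb{U}$. Finite generation of $L'/L$ is essential; without it one only obtains Hrushovski's intermediate $*$-definable groupoid.

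First, using internality together with finite generation, I would fix a single $L'$-formula $\theta(x, y, \bar w)$ and a $0$-definable set $W$ in $\mathbb{U}'$ such that for each $\bar w \in W$, $\theta(x, y, \bar w)$ defines the graph of a bijection $\theta_{\bar w} : D_{\bar w} \to S$ with $D_{\bar w} \subseteq \mathbb{U}^n$. I would take $\mathcal{G}'$ to have as objects the canonical codes $\lceil D_{\bar w}\rceil \in \mathbb{U}^{\eq} = \mathbb{U}$ (available by elimination of imaginaries in $T$) together with one extra object $\ast$ corresponding to $S$, and as morphisms the $\theta_{\bar w}$, $\theta_{\bar w}^{-1}$, and all compositions $\theta_{\bar w_2}^{-1} \circ \theta_{\bar w_1}$. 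Connectedness is automatic, since every object is linked to $\ast$ through some $\theta_{\bar w}$.

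The key technical point is that the restriction $\mathcal{G}$ is $0$-definable in $\mathbb{U}$. The object set is a $\mathbb{U}'$-definable subset of $\mathbb{U}$, hence $\mathbb{U}$-definable by stable embeddedness. Each morphism $\theta_{\bar w_2}^{-1} \circ \theta_{\bar w_1} : D_{\bar w_1} \to D_{\bar w_2}$ has its graph inside $\mathbb{U}^{2n}$; being $\mathbb{U}'$-definable it is therefore $\mathbb{U}$-definable by stable embeddedness, and a standard compactness argument produces finitely many $L$-formulas $\psi_1, \ldots, \psi_k$ uniformly defining all such graphs as their $\mathbb{U}$-parameters vary. Collecting the valid parameter tuples gives the morphism set of $\mathcal{G}$ as a $0$-definable family in $\mathbb{U}$.

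Finally I would identify $\Aut_{\mathcal{G}'}(S)$ with $\Aut(\mathbb{U}'/\mathbb{U})$ as groups acting on $S$. For $\sigma \in \Aut(\mathbb{U}'/\mathbb{U})$ and any $\bar w \in W$, one has $D_{\sigma(\bar w)} = D_{\bar w}$ (since $\sigma$ fixes $\mathbb{U}$) and the short calculation $\sigma(\theta_{\bar w}(d)) = \theta_{\sigma(\bar w)}(d)$ for $d \in D_{\bar w}$ gives $\sigma|_S = \theta_{\sigma(\bar w)} \circ \theta_{\bar w}^{-1}$, placing $\sigma|_S$ in $\Aut_{\mathcal{G}'}(S)$; injectivity of $\sigma \mapsto \sigma|_S$ follows because $\sigma$ is determined on the sorts $\mathbb{U} \cup S$. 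For surjectivity, every morphism $S \to S$ in $\mathcal{G}'$ takes the form $\theta_{\bar w_2} \circ \theta_{\bar w_1}^{-1}$ with $\bar w_1, \bar w_2 \in W$ satisfying $D_{\bar w_1} = D_{\bar w_2}$; one must verify, from the construction of $W$, that such pairs are necessarily $\mathbb{U}$-conjugate, at which point Fact~\ref{seset}(iii) produces the required $\sigma \in \Aut(\mathbb{U}'/\mathbb{U})$. This verification, together with the $0$-definability of $\mathcal{G}$ in $\mathbb{U}$, is the main obstacle, and requires a careful simultaneous use of finite generation, stable embeddedness, and compactness.
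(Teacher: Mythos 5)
Your overall architecture mirrors the paper's proof (a single formula parameterising bijections onto $S$, stable embeddedness to push the induced groupoid into $\mathbb U$, and identification of $\Hom(S,S)$ with $\Aut(\mathbb U'/\mathbb U)$), but there is a genuine gap at the very first step, and it is not a deferrable "verification": you never say how to construct the $0$-definable parameter set $W$, and everything hinges on that choice. If $W$ is taken naively -- say, all tuples $\bar w$ for which $\theta(x,y,\bar w)$ defines a bijection from a $\mathbb U$-definable set onto $S$ -- then the property you need at the end (that $\bar w_1,\bar w_2\in W$ with $D_{\bar w_1}=D_{\bar w_2}$ are conjugate under $\Aut(\mathbb U'/\mathbb U)$) is simply false, and with it the second bullet of the theorem. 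Concretely, in the canonical torsor example $\mathbb U'=(G,S,\cdot,e,\rho)$, both $g\mapsto g\cdot a$ and $g\mapsto g^{-1}\cdot a$ are definable bijections $G\to S$, but their composite $g\cdot a\mapsto g^{-1}\cdot a$ does not commute with the left $G$-action and so is not the restriction of any element of $\Aut(\mathbb U'/\mathbb U)$ (unless every element of $G$ has order at most $2$). So $\Hom_{\mathcal G'}(S,S)$ would strictly contain $\Aut(\mathbb U'/\mathbb U)$.

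What is missing is precisely the mechanism the paper uses to cut $W$ down, and it is the mathematical heart of the theorem. Starting from one bijection $f_c:S\to O_b$, stable embeddedness of $\mathbb U$ (Fact~\ref{seset}(ii)) gives a \emph{small} set $B\subseteq\mathbb U$ with $\tp(c/B)\vdash\tp(c/\mathbb U)$; consequently any two realisations $d,e$ of $p=\tp(c/B)$ are conjugate by an element of $\Aut(\mathbb U'/\mathbb U)$, so $f_d^{-1}\circ f_e\cup\id_{\mathbb U}$ is elementary. Finite generation of the language is then used to express ``$f_x^{-1}\circ f_y\cup\id_{\mathbb U}$ is elementary'' by a single formula $\theta(x,y)$, and compactness applied to $p(x)\cup p(y)\vdash\theta(x,y)$ replaces the type $p$ by a formula $\phi(x,b)$ with $\phi(x,b)\wedge\phi(y,b)\vdash\theta(x,y)$ (and a further formula $\psi(z)\in\tp(b)$ controlling the admissible object parameters). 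The resulting definable family $\{(c',b'):\models\psi(b')\wedge\phi(c',b')\}$ is the correct $W$, and only for this $W$ does your final conjugacy claim hold. Since your proposal invokes ``the construction of $W$'' to justify that claim while never providing such a construction, the proof as written is circular at its key point; the three ingredients you list (finite generation, stable embeddedness, compactness) must be deployed in exactly this way \emph{before} the groupoid is defined, not afterwards.
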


\begin{proof}
Since $\mathbb U$ eliminates imaginaries, there is a definable
subset $O_b$ in $\mathbb U$ and a definable bijection $f_c : S \to O_b$, where $b \in \mathbb U$ and $c \in \mathbb U'$ are the parameters used in the respective definitions.
Given $(b',c')$ we can consider the corresponding formulas with parameters $(b',c')$ in place of $(b,c)$; and when this again gives rise to a definable function we will write it as $f_{c'}:S\to O_{b'}$.

Since $\mathbb U$ is stably embedded, there is a small set $B\subseteq\mathbb U$  -- which we may assume contains $b$ -- such that
$\tp(c/B) \vdash \tp(c/\mathbb U)$. Let $p(x) = \tp(c/B)$. If $d, e \models p$,
then there is an automorphism $\sigma \in \Aut(\mathbb U'/\mathbb U)$ such that
$\sigma(e) = d$. Hence for any $a \in S$ we have
$$f_e(a) = \sigma(f_e(a)) = f_{\sigma(e)}(\sigma(a)) = f_d(\sigma(a)).$$
Thus $f_d^{-1} \circ f_e = \sigma|_S$ and so $f_d^{-1} \circ f_e \cup
\id_{\mathbb U}$ is elementary. But since the language of $\mathbb U'$ is finitely generated over that of $\mathbb U$, there is a formula $\theta(d, e)$ asserting this. Moreover, we have shown that $p(x)
\cup p(y) \vdash \theta(x, y)$. Hence by compactness there is a formula
$\phi(x, g) \in p(x)$ such that
$$\phi(x, g) \land \phi(y, g) \vdash \theta(x, y).$$
By extending $b$ and $g$ we may assume that $b = g$. Again by compactness there is a
formula $\psi(z) \in \tp(b)$ such that
$$\psi(z) \vdash \forall x, y (\phi(x, z) \land \phi(y, z) \to \theta(x, y))$$

Now as objects of $\mathcal G'$ we take $S$ together with $O_{b'}$ where $b'$
realises $\psi(z)$. The morphisms from $S$ to $O_{b'}$ are $f_{c'}$ where $c'$
realises $\phi(x, b')$. Note that if $f_{c'}, f_{c''} \in \Hom(S, O_{b'})$, then
$\theta(c', c'')$ holds and therefore $f_{c'}^{-1} \circ f_{c''} \cup \id_{\mathbb U}\in
\Aut(\mathbb U'/\mathbb U)$. Conversely if $\sigma \in \Aut(\mathbb U'/\mathbb U)$ then $f_{c'}
\circ \sigma|_S = f_{\sigma^{-1}(c')} \in \Hom(S, O_b')$.
Therefore, for any $b'$ realising $\psi(z)$, and for any $c'$
realising $\phi(x, b')$,
\begin{equation}
\label{homaut}
\Hom(S, O_{b'}) = f_{c'} \circ \Aut(\mathbb U'/\mathbb U).
\end{equation}
The rest of the groupoid can be defined from this. We set
$$\begin{array}{rcl}
\Hom(O_{b'}, S) & = & \{f^{-1} : f \in \Hom(S, O_{b'})\} \\
\Hom(O_{b'}, O_{b''}) & = & \{f \circ g^{-1} : f \in \Hom(S, O_{b''}), g \in
\Hom(S, O_{b'}) \}\\
\Hom(S, S) & = & \{f^{-1} \circ g : f,g \in \Hom(S, O_{b'}) \text { for some }
b'\}
\end{array}$$
It is not hard to verify that $\mathcal G'$ is indeed a $0$-definable connected groupoid in $\mathbb U'$.
Note also that from~(\ref{homaut}) it follows that $\Hom(S, S) = \Aut(\mathbb U'/\mathbb U)$.
\end{proof}

The following proposition shows that finite-generatedness of the language is a necessary hypothesis of the above the theorem.
It will also be of use later.

\begin{proposition}
\label{finite-language}
Let $\mathbb U'$ be an internal cover of $\mathbb U$ with one new sort $S$, such that $\Aut(\mathbb U'/\mathbb U)$ together with its action on $S$ is interpretable in $\mathbb U'$.
Then the language of $\mathbb U'$ is finitely generated over $\mathbb U$.
 \end{proposition}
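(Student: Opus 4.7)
My plan is to exhibit a finite sublanguage $L'_0\subseteq L'$ with $\Aut(\mathbb U'\upharpoonright L'_0) \subseteq \Aut(\mathbb U')$, which by saturation of $\mathbb U'$ gives that every $L'$-formula is $T'$-equivalent to an $L'_0$-formula. The strategy closely mirrors the proof of Theorem~\ref{groupoid-construction}, with the interpretability of $G$ and its action on $S$ substituting for the finite generation hypothesis used there.

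First, fix an $L'$-formula $\beta(x,y,\bar z)$ and a tuple $\bar c$ in $\mathbb U'$ so that $\beta(\cdot,\cdot,\bar c)$ is the graph of a bijection $f_{\bar c}\colon S\to O$ for some $0$-definable $O\subseteq\mathbb U$ (using internality and elimination of imaginaries in $T$), and a finite collection $\Sigma\subseteq L'$ interpreting $G$ and its action $\alpha$. The key point is that the formula
\[
\theta(d,e)\;:=\;\exists g\in G\;\forall s, s', o\,\bigl(\beta(s,o,e)\wedge\beta(s',o,d)\to\alpha(g,s,s')\bigr),
\]
asserting ``$f_d^{-1}\circ f_e=\alpha_g$ for some $g\in G$'', is expressible using only $\beta$ and $\Sigma$. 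This plays the role of the formula called $\theta$ in the proof of Theorem~\ref{groupoid-construction}, but is now available without assuming finite generation of the language.

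Next I would import the compactness step from that proof. By Fact~\ref{seset}(ii), fix finite $B\subseteq\mathbb U$ with $p(z):=\tp(\bar c/B)\vdash\tp(\bar c/\mathbb U)$. For $\bar c_1,\bar c_2\models p$, Fact~\ref{seset}(iii) gives $\sigma_1,\sigma_2\in G$ with $\sigma_i(\bar c)=\bar c_i$, and a short calculation produces $f_{\bar c_1}^{-1}\circ f_{\bar c_2}=\alpha_{\sigma_1\sigma_2^{-1}}$, so $\theta(\bar c_1,\bar c_2)$ holds. Thus $p(z)\cup p(z')\vdash\theta(z,z')$, and compactness yields $\phi(z,\bar b)\in p(z)$ (with $\bar b$ enumerating $B$) satisfying $\phi(z,\bar b)\wedge\phi(z',\bar b)\vdash\theta(z,z')$. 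I take $L'_0$ to consist of $L$, the sort symbol for $S$, and the finitely many $L'$-symbols appearing in $\beta$, $\Sigma$, and $\phi$.

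To conclude, given $\sigma\in\Aut(\mathbb U'\upharpoonright L'_0)$, Fact~\ref{sereduct}(ii) lifts $\sigma|_\mathbb U$ to some $\tau\in\Aut(\mathbb U')$, and replacing $\sigma$ by $\tau^{-1}\sigma$ reduces to the case $\sigma|_\mathbb U=\id$. Then $\sigma$ fixes $\bar b$ and preserves $\phi$, so $\phi(\sigma(\bar c),\bar b)$ holds along with $\phi(\bar c,\bar b)$, yielding $\theta(\sigma(\bar c),\bar c)$: some $g\in G$ satisfies $f_{\sigma(\bar c)}^{-1}\circ f_{\bar c}=\alpha_g$. Meanwhile preservation of $\beta$ and fixing of $O$ pointwise force the routine identity $\sigma|_S=f_{\sigma(\bar c)}^{-1}\circ f_{\bar c}$, so $\sigma|_S=\alpha_g$ and therefore $\sigma\in G\subseteq\Aut(\mathbb U')$. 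The main obstacle is precisely ensuring that the $L'_0$-structure is rich enough to force an $L'_0$-automorphism fixing $\mathbb U$ to map $\bar c$ within its own $G$-orbit; the formula $\phi$ provides exactly this constraint, and the argument relies crucially on $\theta$ itself being expressible in the finite fragment $L'_0$ — which is precisely what the interpretability of $G$ and $\alpha$ supplies.
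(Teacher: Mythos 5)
Your proof is correct, and its skeleton matches the paper's: both use the interpretability of $\Aut(\mathbb U'/\mathbb U)$ together with its action on $S$ to turn the statement ``$f_d^{-1}\circ f_e$ extends by the identity to an automorphism'' into a single $L'$-formula $\theta(d,e)$, and both finish by observing that any automorphism $\sigma$ of the candidate reduct fixing $\mathbb U$ pointwise satisfies $\sigma=f_{\sigma(\bar c)}^{-1}\circ f_{\bar c}\cup\id_{\mathbb U}$. Where you genuinely diverge is the middle compactness step. The paper compacts over the \emph{language}: from the formula $\theta$ it extracts a finite sublanguage $\Sigma\subseteq L'$ such that, for the special maps $f_d^{-1}\circ f_e\cup\id_{\mathbb U}$, being $\Sigma$-elementary already implies being $L'$-elementary; a reduct automorphism fixing $\mathbb U$ is then automatically $\Sigma$-elementary and hence lies in $\Aut(\mathbb U'/\mathbb U)$. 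You instead compact over the \emph{type}: using stable embeddedness (Fact~\ref{seset}) to choose $B$ with $\tp(\bar c/B)\vdash\tp(\bar c/\mathbb U)$, you extract a single $\phi\in\tp(\bar c/B)$ with $\phi(z,\bar b)\wedge\phi(z',\bar b)\vdash\theta(z,z')$, exactly as in the proof of Theorem~\ref{groupoid-construction}, and then conclude via $\theta(\sigma(\bar c),\bar c)$ evaluated semantically in $\mathbb U'$. This buys a proof that transparently parallels Theorem~\ref{groupoid-construction}, with interpretability replacing finite generation of the language as the source of the single formula $\theta$; the cost is an appeal to stable embeddedness via Fact~\ref{seset}(ii), which the paper's proof of this proposition does not need. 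Three small inaccuracies, none fatal: Fact~\ref{seset}(ii) yields a \emph{small}, not finite, set $B$ (harmless, since your $\phi$ involves only a finite tuple $\bar b$ from $B$); the image of $f_{\bar c}$ need only be $\bar c$-definable rather than $0$-definable, but your argument never uses surjectivity onto $O$, only injectivity, so taking $O$ to be an ambient $0$-definable set is fine; and placing the interpreting symbols $\Sigma$ into $L'_0$ is actually unnecessary---$\theta$ enters only through the $T'$-entailment $\phi\wedge\phi\vdash\theta$ and its truth in the full structure $\mathbb U'$, so what is crucial (as you correctly exploit in the compactness step, though your closing remark slightly misplaces the emphasis) is that $\theta$ be a single $L'$-formula at all, not that it be expressible in the finite fragment.
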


\begin{proof}
An immediate consequence of the definability of the action of $\Aut(\mathbb U'/\mathbb U)$ on $S$ is that, working in $(\mathbb U')^{\eq}$, for any formula $\phi(x,y,z)$, where $x$ and $y$ are from the sort $S$, there is a formula $\theta(z)$ such that $\models\theta(e)$ if and only if $\phi(x,y,e)$ defines the graph of a bijection $S\to S$ which extends by identity to an automorphism of $\mathbb U'$.
Indeed, $\theta(z)$ asserts that $\phi(x,y,z)$ agrees with some element of $\Aut(\mathbb U'/\mathbb U)$ on $S$.

Now, by internality, for some $c\in\mathbb U'$, there is a $c$-definable bijection $f_c:S\to O_c$, where $O_c$ is a definable set in $\mathbb U$.
Let $\psi(x,u,v)$ be an $L'$-formula such that $\psi(x,u,c)$ defines the graph of $f_c$, and $\psi(x,u,c')$ defines the graph of a bijection $f_{c'}:S\to O_{c'}$ for all $c'$.
Let $z=(v,w)$ where $w$ is of the same arity as $v$, and set
$$\phi(x,y,z):=\exists u\big( \psi(x,u,v)\wedge\psi(y,u,w)\big).$$
Applying the property described in the paragraph above to $\phi(x,y,z)$, we have an $L'$-formula $\theta(v,w)$ such that $\models\theta(e,d)$ if and only if the bijection $f_d^{-1}\circ f_e:S\to S$ extends by identity to an automorphism of $\mathbb U'$.
(At first $\theta$ may be an $(L')^{\eq}$-formula, but as $v$ and $w$ belong to sorts in $\mathbb U'$ it can be replaced by an $L'$-formula.)
It follows by compactness that there must be a finite sublanguage $\Sigma\subseteq L'$ such that $f_d^{-1}\circ f_e\cup\id_{\mathbb U}$ is $L'$-elementary if and only if it is $\Sigma$-elementary.
Extending $\Sigma$ we may also assume that $\psi(x,u,v)$ is a $\Sigma$-formula.
Now set $L_\circ':=L\cup\Sigma$.

We claim that this $L_\circ'$ works.
That is, that every $0$-definable set in $\mathbb U'$ is $0$-definable in the reduct to $L'_\circ$.
Letting $\mathbb U'_\circ$ be the reduct of $\mathbb U'$ to the language $L'_\circ$, it suffices to show that $\Aut(\mathbb U'_\circ)=\Aut(\mathbb U')$, see for example~\cite[$\S$10.5]{hodges}.
Since $L'_\circ$ extends $L$, this in turn reduces to showing that $\Aut(\mathbb U'_\circ/\mathbb U)=\Aut(\mathbb U'/\mathbb U)$, see for example~\cite[Lemma~1.3]{hrushovski}.
Suppose $\sigma\in \Aut(\mathbb U'_\circ/\mathbb U)$.
Then for any $a\in S$,
\begin{eqnarray*}
f_c(a)
&=& \sigma(f_c(a)) \ \ \ \ \text{ as $f_c(a)\in\mathbb U$}\\
&=& f_{\sigma(c)}(\sigma(a)) \ \ \ \text{ as $\psi(x,u,v)$ is an $L'_{\circ}$-formula}
\end{eqnarray*}
so that $\sigma=f_{\sigma(c)}^{-1}\circ f_c\cup\id_{\mathbb U}$.
Hence, as $\sigma$ is $\Sigma$-elementary, it must be $L'$-elementary, as desired.
\end{proof}

\begin{corollary}
\label{wstable-finite-language}
If $\mathbb U'$ is an internal cover of $\mathbb U$ with one new sort $S$, and $\mathbb U'$ is totally transcendental, then the language of $\mathbb U'$ is finitely generated over $\mathbb U$.
\end{corollary}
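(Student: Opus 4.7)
The plan is to invoke Proposition~\ref{finite-language}: it suffices to show that $\Aut(\mathbb U'/\mathbb U)$, together with its action on $S$, is interpretable in $\mathbb U'$.

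First I would observe that, even without total transcendence, one has $*$-interpretability essentially for free. By internality, fix $c\in\mathbb U'$ and a $c$-definable bijection $f_c:S\to O_c$ with $O_c$ definable in $\mathbb U$. For any $\sigma\in\Aut(\mathbb U'/\mathbb U)$ and $a\in S$, the value $f_c(a)\in\mathbb U$ is fixed by $\sigma$, so $f_c(a)=\sigma(f_c(a))=f_{\sigma(c)}(\sigma(a))$, whence $\sigma|_S=f_{\sigma(c)}^{-1}\circ f_c$. Thus $\sigma$ is entirely determined by $\sigma(c)$. By Fact~\ref{seset}(iii) the orbit of $c$ under $\Aut(\mathbb U'/\mathbb U)$ is exactly the realisation set of $p:=\tp(c/\mathbb U)$, so the binding group and its action on $S$ are parametrised by the partial type $p$.

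Total transcendence then promotes this to genuine interpretability. In a totally transcendental theory the descending chain condition on definable sets yields a formula $\phi(x,e)\in p$ of minimal Morley rank and Morley degree one, so that every generic realisation of $\phi$ realises $p$. A standard group-chunk argument -- exploiting that $\Aut(\mathbb U'/\mathbb U)$ acts regularly on the realisations of $p$ via $d\cdot d'=$ the unique element sending $c$ to the appropriate translate -- then promotes the generic part of $\phi$ to an interpretable group in $\mathbb U'$ that is isomorphic, as a permutation group on $S$, to $\Aut(\mathbb U'/\mathbb U)$, with the action on $S$ inherited via $d\mapsto (a\mapsto f_d^{-1}(f_c(a)))$. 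This is essentially the binding-group theorem specialised to $\omega$-stable theories; once interpretability is in hand, Proposition~\ref{finite-language} immediately concludes the corollary.

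The main obstacle is the last, group-chunk, step; the cleanest way to handle it is to cite the classical construction of binding groups in totally transcendental theories (e.g.\ via Poizat's or Pillay's treatment) rather than redevelop it here.
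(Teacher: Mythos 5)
Your proposal is correct and takes essentially the same route as the paper: the paper's proof simply invokes the classical binding group theorem for totally transcendental theories (citing Pillay, \emph{Geometric Stability Theory}, Theorem~7.4.8) to obtain interpretability of $\Aut(\mathbb U'/\mathbb U)$ together with its action on $S$, and then applies Proposition~\ref{finite-language}. Your intermediate sketch (the $*$-parametrisation by realisations of $\tp(c/\mathbb U)$ and the group-chunk step) is just an outline of that cited theorem's proof, and your closing recommendation to cite the classical construction rather than redevelop it is exactly what the paper does.
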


\begin{proof}
The usual liason group theorem, see for example~\cite[Theorem 7.4.8]{pillay}, applied to totally transcendental theories, tells us that $\Aut(\mathbb U'/\mathbb U)$ together with its action on $S$ is interpretable in $\mathbb U'$.
Now apply Proposition~\ref{finite-language}.
\end{proof}

It follows that instances of internality in a totally transcendental theory can be understood in terms of internal generalised imaginary sorts.

\begin{example}
\label{dcfexample}
Suppose $\mathbb M$ is a saturated differentially closed field in characteristic zero and $\mathbb U$ is its field of constants with the induced structure (which we know is that of a pure algebraically closed field).
Suppose $S$ is a definable set in $\mathbb M$ that is internal to the constants, for example defined by $\delta x=1$ or $\delta x=x$.
Let $\mathbb U'$ be the $0$-definable structure induced by $\mathbb M$ on $(\mathbb U,S)$.
Then, by Corollary~\ref{wstable-finite-language}, $\mathbb U'$ is an internal generalised imaginary sort of $\mathbb U$.
One gets similar examples by working with the theory of compact complex manifolds rather than differentially closed fields, in which case $\mathbb U$ is the induced structure on the projective line (again essentially a pure algebraically closed field).
\end{example}

Having proved the existence of liason groupoids, Hrushovski goes on to show in~\cite[Theorem~3.2]{hrushovski} that every $0$-definable connected groupoid in $\mathbb U$ comes from an internal generalised imaginary sort.
This is not precisely correct.
Note that the groupoid $\mathcal G'$ of Theorem~\ref{groupoid-construction} has an additional property: if $c \in \mathbb U'$ is as in the proof, then any automorphism $\sigma \in
\Aut(\mathbb U'/\mathbb U)$ that fixes $c$ must be the identity.
Indeed, for any $a\in S$,
\begin{eqnarray*}
\sigma(a)
&=&
\sigma(f_c^{-1}f_c(a))\\
&=&
f_{\sigma(c)}^{-1}(\sigma(f_c(a)))\\
&=&
f_c^{-1}(f_c(a))\\
&=&
a.
\end{eqnarray*}
By connectedness, a similar property then holds for $\mathcal G$, the restriction of $\mathcal G'$ to $\mathbb U$.
So the definable groupoids in $\mathbb U$ that arise this way are special.

\begin{definition}
A definable groupoid $\mathcal G$ is called {\em finitely faithful}\footnote{This terminology was suggested to us by Hrushovski in private communication.} if for any object $O$ of $\mathcal G$, there is a finite $c \subset O$ such that the pointwise stabiliser of $c$ in $\Aut_{\mathcal G}(O)$ is trivial.
\end{definition}

Note that if $\mathcal G$ is connected then it suffices to check the condition for some (rather than every) object~$O$.

Of course, not all definable groupoids (even group actions) are finitely faithful.

From a $0$-definable connected groupoid $\mathcal G$ in $\mathbb U$, not necessarily finitely faithful, Hrushovski does construct an associated internal cover of $\mathbb U$, but it is not an internal generalised imaginary sort as it involves two new sorts (rather than one), neither of which can be eliminated in general.
We review his construction and then explain how finite faithfulness enters the picture.

Let $\mathcal G = (O_i, f_m)_{i \in I, m \in M}$ be a $0$-definable connected
groupoid in $\mathbb U$.
We want to associate an internal cover $\mathbb U'$.
The idea is to construct $\mathbb U'$ by adding a single new object to $\mathcal G$.
We just duplicate one of the objects and add the necessary morphisms to make this into a connected groupoid.
Fix some $i \in I$.\label{constructiondetails}
There are two new sorts $O_*$ and $M_*$. The sort
$O_*$ as a set is a copy of $O_i$, for concreteness let $O_* = O_i \times \{0\}$.
The sort $M_*$ is a copy of the set of morphisms in $M$ whose domain or range is
$O_i$, plus two additional copies of $M(i,i)$, for concreteness say
$$M_* = (M(i,i) \times \{0, 1, 2\}) \cup \bigcup_{j \neq i \in J} \big(M(i,j) \cup
M(j,i)\big) \times \{0\}.$$
The idea is that
\begin{itemize}
\item
$M(i,i)\times\{0\}$ will encode morphisms $O_*\to O_*$,
\item
$M(i,i)\times\{1\}$ will encode morphisms $O_i\to O_*$,
\item
$M(i,i)\times\{2\}$ will encode morphisms $O_*\to O_i$,
\item
$M(i,j)\times\{0\}$ will encode morphisms $O_*\to O_j$, and
\item
$M(j,i)\times\{0\}$ will encode morphisms $O_j\to O_*$.
\end{itemize}
Next we add a ternary relation $R$ that interprets
these morphisms as bijection.
Formally $R$ is interpreted as the set $R_1\cup R_2\cup R_3$ where
\begin{itemize}
\item
$R_1\subset M_*\times O\times O_*$ is given by
\begin{eqnarray*}
R_1&:=&\big\{\big((m,1),a,(b,0)\big):m\in M(i,i), a,b\in O_i, f_m(a)=b\big\}\ \cup\\
& & \big\{\big((m,0),a,(b,0)\big):m\in M(j,i), j\neq i, a\in O_j, b\in O_i, f_m(a)=b\big\}
\end{eqnarray*}
\item
$R_2\subset M_*\times O_*\times O$ is given by
\begin{eqnarray*}
R_1&:=&\big\{\big((m,2),(a,0),b\big):m\in M(i,i), a,b\in O_i, f_m(a)=b\big\}\ \cup\\
& & \big\{\big((m,0),(a,0),b\big):m\in M(i,j), j\neq i, a\in O_i, b\in O_j, f_m(a)=b\big\}
\end{eqnarray*}
\item
$R_3\subset M_*\times O_*\times O_*$ is given by
\begin{eqnarray*}
R_1&:=& \big\{\big((m,0),(a,0),(b,0)\big):m\in M(i,i), a,b\in O_i, b\in O_j, f_m(a)=b\big\}
\end{eqnarray*}
\end{itemize}

\begin{remark}
There is a notational detail here: the arity of $R$ is not well defined as the sequence of sorts are different in $R_1$, $R_2$ and $R_3$.
So technically we either view this as three seperate relations, or what we prefer, is to work in $(\mathbb U')^{\eq}$ where  $O \sqcup O_*$ can be identified with a sort, and then view  $R \subseteq M_*\times (O
\sqcup O_*) \times (O \sqcup O_*)$.
\end{remark}

We define $\mathcal G'$ to be the groupoid with objects $(O_i)_{i\in I}\cup\{O_*\}$ and morphisms $(f_m)_{m\in M}\cup(R_{m'})_{m'\in M_*}$.
It is a connected one-object extension of $\mathcal G$.
Working in $(\mathbb U')^{\eq}$, we set $I' := I \cup \{*\}$, $O':=O \sqcup O_*$, $M':=M\sqcup M_*$, $\pi':O'\to I'$ the natural extension of $\pi:O\to I$ by $O_*$, and $f'\subseteq O'\times O'\times M'$ the natural extension of $f\subseteq O\times O\times M$ by $R$.
It is easily seen that this presents $\mathcal G'$ as a $0$-definable connected groupoid in $(\mathbb U')^{\eq}$.

\begin{theorem} [Hrushovski]
\label{cover-construction}
Given $\mathbb U$ and $0$-definable connected groupoid $\mathcal G$, the structure $\mathbb U'$ constructed above determines an internal cover of $\mathbb U$ that depends only on $\mathcal G$ (so not on the choice of $i$).
Furthermore, $\Aut_{\mathcal G'}(O_*)$ together with its action on $O_*$ is isomorphic to 
$\Aut(O_*/\mathbb U):=\{\sigma|_{O_*}:\sigma\in\Aut(\mathbb U')\text{ and }\sigma|_{\mathbb U}=\id_{\mathbb U}\}$.
\end{theorem}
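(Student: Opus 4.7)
The plan is to establish, in order: (i) that $\mathbb U'$ is an internal cover of $\mathbb U$; (ii) that its isomorphism type over $\mathbb U$ is independent of the choice of $i \in I$; and (iii) the identification of $\Aut_{\mathcal G'}(O_*)$ with $\Aut(O_*/\mathbb U)$.

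For (i), internality of $O_*$ is witnessed by any morphism of $\mathcal G'$ from $O_*$ to a sort of $\mathbb U$: for $m \in M(i,i)$ the map $R_{(m,2)} : O_* \to O_i$ is a definable bijection onto a definable set in $\mathbb U$. Saturation of $\mathbb U'$ is inherited from that of $\mathbb U$, since the new sorts $O_*$ and $M_*$ are tautologically in bijection with sets definable in $\mathbb U$ and carry no intrinsic structure beyond $R$, which is itself defined from $\mathcal G$. The substantive content of (i) is stable embeddedness, for which by Fact~\ref{sereduct}(ii) one must lift an arbitrary $\tau \in \Aut(\mathbb U)$ to an automorphism $\tilde\tau$ of $\mathbb U'$. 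Setting $j := \tau(i)$, connectedness of $\mathcal G$ furnishes some $m_1 \in M(j,i)$; I would extend $\tau$ by $\tilde\tau(a,0) := (f_{m_1}(\tau(a)),0)$ on $O_*$, and then extend further to $M_*$ by decreeing that for each $\mu \in M_*$, $\tilde\tau(\mu)$ is the unique element of $M_*$ encoding the bijection $\tilde\tau \circ R_\mu \circ \tilde\tau^{-1}$. The content of the verification is that this composition is indeed encoded in $M_*$---equivalently, is a morphism in $\mathcal G'$---which on each of the five blocks of $M_*$ reduces to the functoriality identity $\tau(f_m(x)) = f_{\tau(m)}(\tau(x))$, automatic since $\mathcal G$ is $0$-definable in $\mathbb U$.

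For (ii), given two choices $i, i' \in I$ yielding structures $\mathbb U'_i$ and $\mathbb U'_{i'}$, choose $m_0 \in M(i,i')$ by connectedness and build an isomorphism $\alpha : \mathbb U'_i \to \mathbb U'_{i'}$ over $\mathbb U$ by $\alpha(a,0) := (f_{m_0}(a),0)$ on $O_*$, and on $M_*$ by the analogous composition-with-$m_0$ prescription that encodes the corresponding shift of the duplicated object from $O_i$ to $O_{i'}$. The verification parallels that of (i).

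For (iii), if $\sigma \in \Aut(\mathbb U'/\mathbb U)$ then $\sigma$ preserves $R$ and fixes every element of $O$; since each block of $M_*$, being determined by the sorts of the source and target of the encoded morphism, is preserved by $\sigma$, one may write $\sigma(m,1) = (m',1)$ for any $(m,1) \in M(i,i)\times\{1\}$, and applying $\sigma$ to $R\bigl((m,1), a, (f_m(a), 0)\bigr)$ yields, after substituting $b := f_m(a)$, that $\sigma(b,0) = (f_{m'} f_m^{-1}(b), 0) = R_{(m'm^{-1},0)}(b,0)$; hence $\sigma|_{O_*} \in \Aut_{\mathcal G'}(O_*)$. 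Conversely, for any $\phi = R_{(m_0,0)} \in \Aut_{\mathcal G'}(O_*)$, the construction in (i) applied with $\tau := \id_{\mathbb U}$ and $m_1 := m_0$ yields $\sigma \in \Aut(\mathbb U'/\mathbb U)$ with $\sigma|_{O_*} = \phi$. The main obstacle throughout is not conceptual but bookkeeping: each of the three verifications requires checking preservation of $R$ separately on each of the five blocks of $M_*$, which is tedious but entirely routine.
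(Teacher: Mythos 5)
The paper contains no proof of this theorem --- it explicitly defers to Lemma~3.1 of~\cite{hrushovski} --- so your proposal can only be measured against the intended argument, and in substance it follows that natural route correctly. All three pillars are sound: saturation of $\mathbb U'$ holds for the reason you gesture at (the new sorts are in \emph{unnamed} bijection with definable sets of $\mathbb U$, so $\mathbb U'$ is a reduct, keeping all sorts, of an expansion interdefinable with $\mathbb U$ --- this deserves the extra sentence, but the idea is right); stable embeddedness follows from Fact~\ref{sereduct}(ii) by your lift of $\tau\in\Aut(\mathbb U)$, acting on $O_*$ through a connecting morphism $f_{m_1}$, $m_1\in M(\tau(i),i)$, and on $M_*$ by conjugation, with the whole verification reducing to $\tau\circ f_m=f_{\tau(m)}\circ\tau$, automatic since the groupoid data is $0$-definable; and the same conjugation mechanism correctly yields both the isomorphism over $\mathbb U$ between the structures built from $i$ and $i'$ (which, by the discussion in Section~\ref{coversection}, is exactly what ``determines the same cover'' means) and the two inclusions giving $\Aut(O_*/\mathbb U)=\Aut_{\mathcal G'}(O_*)$, whose literal equality as permutation groups of $O_*$ is precisely the asserted isomorphism of groups acting on $O_*$.

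There is, however, one genuine omission: internality. An internal cover must have \emph{every} new sort definably isomorphic, over parameters, to a definable set in $\mathbb U$, and $\mathbb U'$ has two new sorts, $O_*$ and $M_*$; you witness internality only of $O_*$. This cannot be waved away via $O_*$: in general $M_*\not\subseteq\dcl(\mathbb U,O_*)$, since by Proposition~\ref{whenff} that containment is equivalent to finite faithfulness, which is not assumed here. The repair is routine but must be done with a parameter from $M_*$ itself. Fix $\nu:=(m,2)\in M_*$, encoding $g:=R_\nu:O_*\to O_i$, and send each $\mu\in M_*$ to the unique element of $M$ whose morphism is $g\circ R_\mu$, $R_\mu\circ g^{-1}$, or $g\circ R_\mu\circ g^{-1}$, according to whether $O_*$ is the codomain, the domain, or both, of $R_\mu$; these composites are morphisms of $\mathcal G'$ between objects of $\mathcal G$, hence of the form $f_m$ for a unique $m\in M$ (uniqueness because distinct elements of $M$ define distinct bijections). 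One small care: this assignment is injective on each of the three pieces of $M_*$ just described but can collide across them, since all three composites may land in $M(i,i)$; recording in addition which piece $\mu$ lies in (a $0$-definable three-fold partition of $M_*$, codable by any three distinct elements of $\mathbb U$) produces a $\nu$-definable bijection of $M_*$ with a definable subset of $\mathbb U$, completing internality.
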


We leave the reader to consult~\cite[Lemma~3.1]{hrushovski} for a proof.

We wish to eliminate the sort $M_*$ so as to get an internal generalised imaginary sort.
Hrushovski erroneously claims that this is always possible (see the parenthetical final sentence of the first paragraph of the proof of Theorem~3.2 of~\cite{hrushovski}).
The next result explains when in fact this can be done.

\begin{proposition}
\label{whenff}
Let $\mathbb U'$ be as constructed above.
The following are equivalent.
\begin{enumerate}
\item The groupoid $\mathcal G$ is finitely faithful.
\item $M_* \subseteq \dcl(\mathbb U, O_*)$ where $\dcl$ is calculated in~$\mathbb
U'$.
\item $(\mathbb U, O_*)$
is stably embedded in~$\mathbb U'$.
\end{enumerate}
\end{proposition}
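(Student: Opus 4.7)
The plan is to prove $(1)\Leftrightarrow(2)$ directly and then derive $(2)\Leftrightarrow(3)$ from the observation that each $\mu\in M_*$ is interdefinable with its graph, which is a subset of $(\mathbb U\cup O_*)^2$. The heart of the proof is the first equivalence, which will use the isomorphism $\Aut_{\mathcal G'}(O_*)\cong\Aut(O_*/\mathbb U)$ supplied by Theorem~\ref{cover-construction}, together with the fact that $M_*$ contains no morphisms between two objects of $I$, so that $\Hom_{\mathcal G'}(O_j,O_j)=M(j,j)=\Aut_{\mathcal G}(O_j)$ for every $j\in I$.

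For $(1)\Rightarrow(2)$, by connectedness each object $O_j$ admits a finite tuple $\bar a_j\subset O_j$ whose pointwise stabiliser in $\Aut_{\mathcal G}(O_j)$ is trivial. Given $\mu\colon O_j\to O_*$ in $M_*$, I will show $\mu\in\dcl(\bar a_j,\mu(\bar a_j))$. Let $\sigma\in\Aut(\mathbb U'/\bar a_j,\mu(\bar a_j))$. Because $\sigma$ preserves the $0$-definable fibration $\pi\colon O\to I$ and fixes the nonempty tuple $\bar a_j$ lying in the fibre $O_j=\pi^{-1}(j)$, it must fix $j$; hence $\sigma(\mu)\in\Hom_{\mathcal G'}(O_j,O_*)$. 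Then $\mu^{-1}\circ\sigma(\mu)\in\Hom_{\mathcal G'}(O_j,O_j)=\Aut_{\mathcal G}(O_j)$, and for each $a\in\bar a_j$ we have $\sigma(\mu)(a)=\sigma(\mu(a))=\mu(a)$ (the last equality since $\mu(a)\in\mu(\bar a_j)$ is fixed by $\sigma$), so $\mu^{-1}\circ\sigma(\mu)$ fixes $\bar a_j$ pointwise. Finite faithfulness forces $\mu^{-1}\circ\sigma(\mu)=\id$, whence $\sigma(\mu)=\mu$. The remaining cases $\mu\colon O_*\to O_j$ and $\mu\colon O_*\to O_*$ reduce to this one by passing to the inverse of $\mu$ and by composing with a fixed $\nu\colon O_i\to O_*$, using that $\nu\in\dcl(\mathbb U,O_*)$ has already been established.

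For $(2)\Rightarrow(1)$, pick any $\mu=(m,1)\colon O_i\to O_*$. By (2) there are finite $\bar b\subset\mathbb U$ and $\bar c\subset O_*$ with $\mu\in\dcl(\bar b,\bar c)$. Set $\bar a:=\mu^{-1}(\bar c)\subset O_i$; I claim its pointwise stabiliser in $\Aut_{\mathcal G}(O_i)$ is trivial, which by connectedness yields finite faithfulness. Given such a $g$, form $h:=\mu g\mu^{-1}\in\Aut_{\mathcal G'}(O_*)$ and invoke Theorem~\ref{cover-construction} to obtain $\sigma\in\Aut(\mathbb U'/\mathbb U)$ with $\sigma|_{O_*}=h$. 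Then $\sigma$ fixes $\bar b$, and $\sigma(\bar c)=h(\bar c)=\mu g(\bar a)=\mu(\bar a)=\bar c$, so $\sigma\in\Aut(\mathbb U'/\bar b\bar c)$ and therefore $\sigma(\mu)=\mu$. A direct computation using that $\sigma$ fixes $\mathbb U$ pointwise gives $\sigma(\mu)(a)=\sigma(\mu(a))=h(\mu(a))$ for each $a\in O_i$, i.e.\ $\sigma(\mu)=h\circ\mu=\mu g$; combining, $\mu g=\mu$, and hence $g=\id$.

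Finally, $(2)\Rightarrow(3)$ is obtained by substituting, in any $\mathbb U'$-formula defining a subset of $(\mathbb U,O_*)^n$, each $M_*$-parameter by the finite $(\mathbb U,O_*)$-tuple that defines it via~(2); and $(3)\Rightarrow(2)$ follows by noting that the graph of any $\mu\in M_*$ is a $\mathbb U'$-definable subset of $(\mathbb U\cup O_*)^2$, which by~(3) is definable over $\mathbb U\cup O_*$, and $\mu$---being the unique element of $M_*$ with this $R$-slice, by faithfulness of $\mathcal G'$---lies in $\dcl(\mathbb U,O_*)$. I expect the main obstacle to lie in $(2)\Rightarrow(1)$, specifically in correctly computing $\sigma(\mu)=h\circ\mu$ from the abstract lifting provided by Theorem~\ref{cover-construction}: one must verify that the groupoid-theoretic composition really does describe the action of the extended automorphism $\sigma$ on elements of the new sort $M_*$, which in turn rests on the rigidity of the structure imposed by the relation $R$.
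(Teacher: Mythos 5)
Your proof is correct, but it closes the equivalences along a genuinely different path than the paper does. Your $(1)\Rightarrow(2)$ and $(2)\Rightarrow(3)$ are essentially the paper's arguments (the paper handles all of $M'$ at once via finite faithfulness of $\mathcal G'$, where you reduce the three kinds of elements of $M_*$ to the case $O_j\to O_*$ by inverting and composing; and the paper dismisses $(2)\Rightarrow(3)$ as clear). The real divergence is in returning from $(3)$: the paper proves $(3)\Rightarrow(1)$ in one step, invoking the characterisation of stable embeddedness by which there is a \emph{small} $A\subseteq O_*$ with $\tp(m/\mathbb U A)\vdash\tp(m/\mathbb U O_*)$, followed by a saturation argument to extract a finite tuple $a\subset A$ whose pointwise stabiliser is trivial. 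You instead prove $(3)\Rightarrow(2)$ directly --- the graph of each $\mu\in M_*$ is a $\mathbb U'$-definable subset of a product of sorts of $(\mathbb U,O_*)$, hence by $(3)$ definable over a tuple $\bar d$ from $\mathbb U\cup O_*$, and $\mu$ is the \emph{unique} element of $M_*$ with that $R$-slice because distinct elements of $M_*$ encode distinct bijections, so $\mu\in\dcl(\bar d)$ --- and then prove $(2)\Rightarrow(1)$ via the lifting $\Aut_{\mathcal G'}(O_*)=\Aut(O_*/\mathbb U)$ supplied by Theorem~\ref{cover-construction}. Your route is the more elementary one: it uses no compactness or saturation, only preservation of the $0$-definable relation $R$ under automorphisms; this same observation also disposes of the worry you flag at the end, since $R(\mu,x,y)\Rightarrow R(\sigma(\mu),\sigma(x),\sigma(y))$ gives $f_{\sigma(\mu)}(\sigma(x))=\sigma(f_\mu(x))$ immediately, and as $\sigma$ fixes $O_i\subseteq\mathbb U$ pointwise this yields $\sigma(\mu)=h\circ\mu$ at once. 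Both routes lean on Theorem~\ref{cover-construction} to lift groupoid automorphisms of $O_*$ to automorphisms of $\mathbb U'$; the paper's single-step $(3)\Rightarrow(1)$ trades your dcl-uniqueness argument for heavier model-theoretic input. One cosmetic point: in $(1)\Rightarrow(2)$ (and equally in the paper's version) one should take the tuple $\bar a_j$ to be nonempty --- harmless, since a point can always be added --- so that fixing it really does force $\sigma(j)=j$.
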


\begin{proof}
If $\mathcal G$ is finitely faithful then, as $\mathcal G'$ is a connected one-object extension of~$\mathcal G$, we get that $\mathcal G'$ is finitely faithful also.
For each $i\in I'$, let $c_i\subset O'_i$ be a finite tuple whose pointwise stabiliser in $\Aut_{\mathcal G'}(O'_i)$ is trivial.
Then for any $m\in M'$, say $m\in M'_{i,j}$ for some $i,j\in I'$, we have that $f_m':O'_i\to O'_j$ is determined by its action on $c_i$, and so $m\in \dcl(f'_m(c_i), c_i)$.
Hence $(1)\implies(2)$.

That $(2) \implies (3)$ is clear.

Let us show that $(3) \implies (1)$.
By connectedness, to show that $\mathcal G$ is finitely faithful it suffices to show that $\mathcal G'$ is, and for this we need only check that $O_*$ has a finite subtuple whose pointwise stabiliser is trivial.

Fix $i\in I$.
Note that for any $m,n \in M'_{*,i}$, 
$(f_n')^{-1}f'_m\in\Aut(O_*/\mathbb U)$ by Theorem~\ref{cover-construction}, and so extends to some $\sigma\in\Aut(\mathbb U'/\mathbb U)$.
Moreover, for any $b\in O_*$
\begin{eqnarray*}
f'_{\sigma(m)}(b)
&=&
\sigma(f'_m(\sigma^{-1}(b)))\\
&=&
f'_m(\sigma^{-1}(b))\ \ \text{ as $f'_m(\sigma^{-1}(b))\in O_i\subseteq\mathbb U$}\\
&=&
f'_m((f'_m)^{-1}f'_n(b)))\\
&=&
f'_n(b).
\end{eqnarray*}
So $\sigma(m)=n$.

Now, fix $m\in M'_{*,i}$.
By stable embeddedness there is a small $A \subseteq O_*$
such that $\tp(m/\mathbb U A) \vdash \tp(m/\mathbb U O_*)$ (see \cite{zoe-udi}).
If $f'_m$ and $f'_n$ agree on $A$ then the $\sigma$ constructed above fixes $A$ pointwise,  and so $m$ and $n$ have the same type over $\mathbb U\cup O_*$, which means that $f'_m$ and $f'_n$ agree on all of $O_*$, and hence $m=n$.
It follows that $m$ is fixed by all automorphism in
$\Aut(\mathbb U'/A f'_m(A))$.
By saturation, $m$ is fixed by all automorphism in
$\Aut(\mathbb U'/a f'_m(a))$ for some finite tuple $a$ from  $A$.
If $f'_m$ and $f'_n$ agree on $a$ then $\sigma\in \Aut(\mathbb U'/a f'_m(a))$, and so we must have $n=m$.
But this means that the pointwise stabiliser of $a$ in $\Aut_{\mathcal
G'}(O')$ is the identity, else one could precompose by a nontrivial element of this stabiliser.
So $\mathcal G'$ is finitely faithful.
\end{proof}

\begin{corollary}
If $\mathbb U$ is stable then all definable connected groupoids are finitely faithful.
\end{corollary}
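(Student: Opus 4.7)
The plan is to apply Proposition~\ref{whenff}. Given a $0$-definable connected groupoid $\mathcal{G}$ in $\mathbb{U}$, I would form the associated internal cover $\mathbb{U}'$ as in the construction preceding Theorem~\ref{cover-construction}. By the equivalence of (1) and (3) in Proposition~\ref{whenff}, it suffices to show that $(\mathbb{U}, O_*)$ is stably embedded in $\mathbb{U}'$.

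My strategy for this is to show that $\mathbb{U}'$ is itself stable, and then apply the standard fact that in a stable theory every $0$-definable set is stably embedded (which follows from definability of types together with the characterisation in Fact~\ref{seset}(ii)). For the stability of $\mathbb{U}'$: each of the new sorts is, by construction, in parameter-definable bijection with $\mathbb{U}$-definable sets; naming a single $m_0 \in M_*$ encoding a morphism $O_i \to O_*$, the map $f'_{m_0}$ identifies $O_*$ with $O_i$, and $M_*$ is likewise identified with its constituent $\mathbb{U}$-definable pieces. Under these identifications the ternary relation $R$ pulls back to a relation already definable in $\mathbb{U}$ from the composition law of $\mathcal{G}$. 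So $\mathbb{U}'_{m_0}$ is bi-interpretable with $\mathbb{U}_{m_0}$, and hence stable.

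The main point of delicacy is the general implication ``stable theory implies every $0$-definable set stably embedded,'' which rests on the standard but nontrivial definability-of-types argument (passing through canonical parameters in $X^{\eq}$ and then replacing them by $X$-representatives). An alternative, more elementary route avoids constructing $\mathbb{U}'$ entirely: $G := \Aut_{\mathcal{G}}(O_i) = M(i,i)$ is a $\mathbb{U}$-definable group acting faithfully on $O_i$ (faithfulness is built into the definition of a definable groupoid, as distinct $m \in M$ define distinct bijections), so the uniformly definable family of point stabilisers $H_x := \{g \in G : g \cdot x = x\}$ satisfies $\bigcap_{x \in O_i} H_x = \{e\}$. Baldwin--Saxl for stable groups then forces some finite sub-intersection to already be trivial, producing the required finite tuple in $O_i$.
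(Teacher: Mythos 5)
Both of your routes work. The first is essentially the paper's own proof: the paper observes in one line that an internal cover of a stable theory is stable (any formula with the order property in the cover can be pushed into the reduct through the internality bijections), so condition~(3) of Proposition~\ref{whenff} holds automatically, and $(3)\Rightarrow(1)$ finishes; your explicit identification of $\mathbb U'$ after naming $m_0$ is just a hands-on verification of that stability claim (though the notation $\mathbb U_{m_0}$ is off, since $m_0\notin\mathbb U$ --- you mean $\mathbb U$ with the parameter $i$ and the image of $m_0$ in $M(i,i)$ named). Your alternative route, however, is genuinely different from the paper and arguably more elementary: it bypasses the cover construction and Proposition~\ref{whenff} altogether. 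The group $G=M(i,i)=\Aut_{\mathcal G}(O_i)$ is definable in the stable structure $\mathbb U$ and acts definably and faithfully on $O_i$ (faithfulness is indeed built into the paper's definition of a definable groupoid, since distinct $m\in M$ define distinct bijections), the point stabilisers $(H_x)_{x\in O_i}$ form a uniformly definable family of subgroups with trivial total intersection, and Baldwin--Saxl --- in its standard form for arbitrary, possibly infinite, uniformly definable families, available in the paper's own reference \cite{tentziegler} --- yields a finite subfamily with trivial intersection, i.e.\ a finite tuple in $O_i$ whose pointwise stabiliser is trivial. What this buys: no covers, no appeal to the fact that in a stable theory every definable set is stably embedded, and no use of connectedness at all, so it actually proves the stronger statement that in a stable theory \emph{all} definable groupoids, connected or not, are finitely faithful. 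The trade-off is that it imports Baldwin--Saxl from stable group theory, whereas the paper's proof uses only general stability facts together with machinery it has already built for the correspondence.
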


\begin{proof}
Note that an internal cover of a stable theory is stable as any formula with the order property in the cover can be pushed into the reduct using any definable bijection witnessing the internality.
Now if $\mathbb U'$ is stable then condition~(3) of Proposition~\ref{whenff} holds automatically.
\end{proof}

In any case, we now can refine Theorem~\ref{cover-construction} to

\begin{theorem}
\label{cover-construction-refined}
Suppose $\mathcal G$ is a $0$-definable finitely faithful connected groupoid in~$\mathbb U$, and let $\mathbb U'$ be the internal cover constructed in Theorem~\ref{cover-construction}.
Let $\mathbb U''$ be the structure induced on $(\mathbb U,O_*)$ by $\mathbb U'$.
Then $\mathbb U''$ is an internal generalised imaginary sort of $\mathbb U$, $\mathcal G'$ is $0$-definable in $(\mathbb U'')^{\eq}$, and $\Aut_{\mathcal G'}(O_*)$ with its action on $O_*$ is isomorphic to  $\Aut(\mathbb U''/\mathbb U)$.
\end{theorem}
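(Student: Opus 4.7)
The plan is to verify in turn the five components of the claim: $(a)$ $\mathbb U''$ is a cover of $\mathbb U$; $(b)$ its sole new sort $O_*$ is internal to $\mathbb U$; $(c)$ $\mathcal G'$ descends to a $0$-definable groupoid in $(\mathbb U'')^{\eq}$; $(d)$ $\Aut_{\mathcal G'}(O_*) \cong \Aut(\mathbb U''/\mathbb U)$ with its action on $O_*$; and $(e)$ the language of $\mathbb U''$ is finitely generated over $L$. The decisive inputs are Theorem~\ref{cover-construction} and Proposition~\ref{whenff}: finite faithfulness of $\mathcal G$ hands us both $M_* \subseteq \dcl^{\mathbb U'}(\mathbb U, O_*)$ and stable embeddedness of $(\mathbb U, O_*)$ in $\mathbb U'$.

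Parts $(a)$, $(b)$, $(d)$, and $(e)$ will follow routinely once $(c)$ is in hand. For $(a)$, stable embeddedness of $\mathbb U''$ in $\mathbb U'$ transfers stable embeddedness of $\mathbb U$ down to $\mathbb U''$, since every $\mathbb U''$-definable subset of $\mathbb U^n$ is $\mathbb U'$-definable. For $(b)$, any morphism $f'_m : O_* \to O_i$ with $m \in M_*$ is definable in $\mathbb U''$ over a tuple in $\mathbb U \cup O_*$: such $m$ lies in $\dcl^{\mathbb U'}(\mathbb U, O_*)$ by Proposition~\ref{whenff}, and stable embeddedness ensures the defining formula may be taken in the induced language on $(\mathbb U,O_*)$. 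For $(d)$, Fact~\ref{sereduct} applied to the stably embedded reduct $\mathbb U''$ gives surjectivity of $\Aut(\mathbb U'/\mathbb U) \to \Aut(\mathbb U''/\mathbb U)$, and since the only new sort of $\mathbb U''$ is $O_*$, each element of the latter is determined by its restriction to $O_*$; chaining with Theorem~\ref{cover-construction}'s identification $\Aut_{\mathcal G'}(O_*) \cong \Aut(O_*/\mathbb U)$ yields the desired isomorphism of groups acting on $O_*$. Then $(e)$ follows immediately from Proposition~\ref{finite-language} applied to the cover $\mathbb U''$: the group $\Aut(\mathbb U''/\mathbb U)$ with its action on $O_*$ is interpretable in $\mathbb U''$ via its identification with $\Aut_{\mathcal G'}(O_*)$ in the $0$-definable groupoid $\mathcal G'$.

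The main obstacle is thus $(c)$, the descent of $\mathcal G'$ to $(\mathbb U'')^{\eq}$. The sort $M'$ of $\mathcal G'$ lives in $(\mathbb U')^{\eq}$ but not in $\mathbb U''$, and one must realise it as a $0$-definable set in $(\mathbb U'')^{\eq}$. The strategy is to use finite faithfulness: picking a finite tuple $c_* \subset O_*$ with trivial pointwise stabiliser in $\Aut_{\mathcal G'}(O_*)$ (which exists since $\mathcal G'$, as a connected one-object extension of the finitely faithful $\mathcal G$, is itself finitely faithful), each morphism in $M'$ is uniquely encoded by its values on, or preimage of, $c_*$, a tuple from sorts of $\mathbb U''$. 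A compactness argument, leveraging $M_* \subseteq \dcl^{\mathbb U'}(\mathbb U, O_*)$ together with stable embeddedness of $\mathbb U''$ in $\mathbb U'$, will then show that the set of such encodings, and each of the groupoid operations — composition, source, target, inversion — is uniformly $0$-definable in the induced structure on $(\mathbb U, O_*)$. The subtle point is to verify that the operations on encodings, not just the encoding itself, live entirely within the sorts of $\mathbb U''$, so that the descent produces a genuine $0$-definable groupoid in $(\mathbb U'')^{\eq}$ rather than a mere set-level copy of $\mathcal G'$.
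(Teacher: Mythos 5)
Your proposal is correct and follows essentially the same route as the paper's proof: the same decomposition, resting on the same three inputs (Theorem~\ref{cover-construction}, Proposition~\ref{whenff}, and Proposition~\ref{finite-language} for finite generation of the language). The only difference is economy: what you single out as the main obstacle, part $(c)$, is dispatched by the paper in one line, since condition~(2) of Proposition~\ref{whenff} (that $M_* \subseteq \dcl(\mathbb U, O_*)$ in $\mathbb U'$) already makes all of $\mathbb U'$ --- and hence $\mathcal G'$ --- interpretable in $\mathbb U''$, so the finite-tuple encoding you plan to carry out by hand (which amounts to re-proving the implication $(1)\Rightarrow(2)$ of that proposition) is not needed.
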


\begin{proof}
Clearly $\mathbb U''$ is an internal cover involving a single new sort $O_*$.
Note also that by condition~(2) of Proposition~\ref{whenff}, and our assumption that $\mathcal G$ is finitely faithful, $\mathbb U'$ is interpretable in $\mathbb U''$.
Hence $\mathcal G'$ is interpretable in $\mathbb U''$, and $\Aut(\mathbb U''/\mathbb U)=\Aut(O_*/\mathbb U)$ where the latter is in the sense of $\mathbb U'$.
So the liason group statement follows from Theorem~\ref{cover-construction}.
The only thing left to check is that the language of $\mathbb U''$ is finitely generated over that of $\mathbb U$.
But this follows from Proposition \ref{finite-language} since the liason group and its action is interpretable.
\end{proof}

This yields a bijective correspondence which appears as Theorem~3.2 in~\cite{hrushovski}, though with the assumption of finite faithfulness omitted.

\begin{theorem}[Hrushovski]
\label{correspondence}
The constructions of Theorems~\ref{groupoid-construction} and~\ref{cover-construction-refined} form a bijective correspondence between internal generalised imaginary sorts of $\mathbb U$ and $0$-definable finitely faithful connected groupoids in $\mathbb U$, up to equivalence on both sides.
\end{theorem}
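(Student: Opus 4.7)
The plan is to check that the two constructions — $\mathbb U'\rightsquigarrow\mathcal G$ from Theorem~\ref{groupoid-construction} and $\mathcal G\rightsquigarrow\mathbb U''$ from Theorem~\ref{cover-construction-refined} — are mutually inverse, up to the implicit notions of equivalence: $\mathbb U$-isomorphism of covers on one side, and definable equivalence of $0$-definable connected groupoids on the other. In both directions the key point is that the extended groupoid $\mathcal G'$, which lives inside the cover and contains $\mathcal G$ as the full subcategory of $\mathbb U$-sorts, serves as a common intermediary.

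\textbf{From cover to groupoid to cover.} Given an internal generalised imaginary sort $\mathbb U'$ with new sort $S$, produce its liason groupoid $\mathcal G$ via Theorem~\ref{groupoid-construction} together with its one-object extension $\mathcal G'$ having $S$ as the additional object. Then apply Theorem~\ref{cover-construction-refined} to $\mathcal G$ anchored at any object $O_i$, obtaining a cover $\widetilde{\mathbb U}$ with new sort $O_*$ and extended groupoid $\widetilde{\mathcal G}'$. Fix any morphism $h:O_i\to S$ in $\mathcal G'$ and use it to define a bijection $O_*\to S$ by $(a,0)\mapsto h(a)$. The main calculation is to verify that this bijection, together with the identity on $\mathbb U$, transports the defining ternary relation $R$ on $\widetilde{\mathbb U}$ to the corresponding system of morphisms in $\mathcal G'$. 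Since by Proposition~\ref{finite-language} the language of each cover is finitely generated by the data of its extended groupoid, it follows that $\widetilde{\mathbb U}\cong\mathbb U'$ over $\mathbb U$.

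\textbf{From groupoid to cover to groupoid.} Given a $0$-definable finitely faithful connected groupoid $\mathcal G$, form the cover $\mathbb U''$ via Theorem~\ref{cover-construction-refined} with extended groupoid $\mathcal G'$ interpretable in $\mathbb U''$ (by Proposition~\ref{whenff}), and then form the liason groupoid $\widetilde{\mathcal G}$ together with its extension $\widetilde{\mathcal G}'$ by applying Theorem~\ref{groupoid-construction} to $\mathbb U''$. Compare the explicit descriptions of the two extended groupoids: the morphisms $O_*\to O_{b'}$ of $\widetilde{\mathcal G}'$ form a coset of $\Aut(\mathbb U''/\mathbb U)$ by~(\ref{homaut}), while those of $\mathcal G'$ form a coset of $\Aut_{\mathcal G'}(O_*)$; since Theorem~\ref{cover-construction-refined} identifies $\Aut(\mathbb U''/\mathbb U)$ with $\Aut_{\mathcal G'}(O_*)$ together with their actions on $O_*$, these cosets agree. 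Hence $\widetilde{\mathcal G}'$ sits inside $\mathcal G'$ as the full subcategory spanned by $O_*$ and the $\mathbb U$-conjugates of $O_i$; restricting to $\mathbb U$-sorts and invoking connectedness of $\mathcal G$, the inclusion $\widetilde{\mathcal G}\hookrightarrow\mathcal G$ is an essentially surjective, fully faithful $0$-definable functor, which is the required definable equivalence.

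\textbf{Main obstacle.} The trickier direction is the second: ensuring that the liason groupoid recovered from $\mathbb U''$ recaptures $\mathcal G$ up to equivalence, rather than some proper subcategory or strict enlargement. Finite faithfulness enters essentially here through Proposition~\ref{whenff}, which guarantees that $\mathcal G'$ is already interpretable in $\mathbb U''$ itself, so that the comparison of $\Hom$-structures above takes place $0$-definably within a single theory. Without this hypothesis, one would be forced to work in the larger cover of Theorem~\ref{cover-construction} and the correspondence would fail at the level of $\mathbb U''$.
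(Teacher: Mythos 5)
Your first direction is essentially the same mechanism the paper uses for the natural isomorphism $\eta$ in its proof of Theorem~\ref{equivcat} (the paper defers the proof of Theorem~\ref{correspondence} itself to Hrushovski, but the in-paper comparison point is that proof): transport the cover-construction data through a fixed $h\in\Hom_{\mathcal G'}(O_i,S)$. The soft spot is your last step. Proposition~\ref{finite-language} says the language of the cover is finitely generated over $L$; it does \emph{not} say that the cover's $0$-definable structure is generated by its extended groupoid, which is what you invoke. What actually has to be checked is that your bijection, extended by $\id_{\mathbb U}$, is a bi-interpretation, and for that one runs the saturation and stable-embeddedness verifications (or an automorphism-transfer argument) using $\Aut_{G'(\mathbb U')}(S)=\Aut(\mathbb U'/\mathbb U)$ and $\Aut_{\widetilde{\mathcal G}'}(O_*)=\Aut(\widetilde{\mathbb U}/\mathbb U)$, exactly as the paper does for $\eta_{\mathbb V}$ and in Lemma~\ref{c(h)}. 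This gap is fillable.

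The genuine gap is in your second direction. The liason groupoid $\widetilde{\mathcal G}$ that Theorem~\ref{groupoid-construction} produces from $\mathbb U''=C(\mathcal G)$ does \emph{not} sit inside $\mathcal G$: its objects $O_{b'}$ come from an application of elimination of imaginaries (some definable bijection $f_c:O_*\to O_b$) followed by a compactness argument choosing formulas $\phi,\psi$, and nothing forces the resulting sets $O_{b'}$ to be among the objects of $\mathcal G$ — the paper even remarks that $G(\mathbb U'')$ is only well defined after an arbitrary choice. Your coset argument is a non sequitur at exactly this point: $\Hom_{\widetilde{\mathcal G}'}(O_*,O_{b'})$ and $\Hom_{\mathcal G'}(O_*,O_j)$ are indeed right cosets of the same permutation group of $O_*$, but they consist of bijections with different codomains, and two cosets of one group agree only if they share an element — which is precisely what would need proving, and fails for a generic run of the construction. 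There are two honest repairs. One is to coordinate the choices in Theorem~\ref{groupoid-construction}: take the initial bijection to be the projection $O_*\to O_i$ and take $\phi$ to pick out the morphism codes of $\mathcal G'$, so that $\widetilde{\mathcal G}$ really becomes a full subcategory of $\mathcal G$; but this must be argued, not assumed. The other, which is what the paper does for $\varepsilon_{\mathcal G}$ in Theorem~\ref{equivcat} and is the substance of Hrushovski's own proof, is to avoid matching objects altogether: the sets $\Hom_{G'C(\mathcal G)}(O_*,O_{2j})\circ\Hom_{\mathcal G'}(O_{1i},O_*)$ form families of bijections between objects of $\mathcal G$ and objects of $GC(\mathcal G)$, $0$-definable in $\mathbb U''$ and hence, by stable embeddedness plus invariance, $0$-definable in $\mathbb U$; these serve as the cross-morphisms of a single $0$-definable connected groupoid in $\mathbb U$ into which both groupoids embed, giving Hrushovski's equivalence (equivalently, an isomorphism in $\mathbf{CGpd}_T$). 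Finally, note that ``essentially surjective and fully faithful'' is not the equivalence notion in the statement; a full-subcategory inclusion does yield the required joint definable embedding, but only once that inclusion actually exists.
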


We leave the reader to consult~\cite{hrushovski} for a proof, but we should at least spell out what these equivalence are. Two generalised imaginary sorts
$\mathbb U_1=(\mathbb U, S_1,\dots)$ and $\mathbb U_2=(\mathbb U, S_2,\dots)$ are called {\em equivalent} if there is a bijection $g : S_1 \to S_2$ such that $g \cup
\id_{\mathbb U}$ is a bi-interpretation; that is, $X$ is $0$-definable in $\mathbb U_1$ if and only if its image is $0$-definable in $\mathbb U_2$.

For equivalence of definable groupoids, recall from~\cite{hrushovski} that a definable embedding of $\mathcal
G_1$ into $\mathcal G_2$ consists of the following:
\begin{itemize}
\item a definable injection $\iota : I_1 \to I_2$;
\item a definable family of definable bijections $(h_i : O_{1i} \to O_{2\iota(i)})_{i \in I_1}$
such that for every $i,j \in I_1$ we have $h_j \circ \Hom_{\mathcal
G_1}(O_{1i}, O_{1j}) = \Hom_{\mathcal G_2}(O_{2\iota(i)}, O_{2\iota(j)}) \circ h_i$.
\end{itemize}
Now, two $0$-definable connected groupoids $\mathcal G_1$ and $\mathcal G_2$ are called {\em equivalent} if there is a $0$-definable
connected groupoid $\mathcal G$ and $0$-definable embeddings $\mathcal G_i \to
\mathcal G$ for $i=1,2$.

\bigskip
\section {Functoriality}
\label{section-functorial}

\noindent
In this section we categorify the correspondence of~\ref{correspondence}.
Our goal here is twofold.  Firstly, one rarely studies
internal covers in isolation. For example in the monster model of $\mathrm
{DCF}_0$ one looks at definable sets internal to the constants, but also at
definable maps between them. So a question arises as to what kind of object do
these maps induce between the corresponding groupoids in the constants.
Secondly, the
correspondence of~\ref{correspondence} is defined up to equivalence; 
we would like to realise these equivalences as isomorphisms.

For a general reference on terminology from category theory we suggest~\cite{leinster}.

\medskip
\subsection{Morphisms of covers}
\label{subsectcovmor}
The idea is that morphisms should be definable maps in a common cover.
So a {\em morphism} between covers $\mathbb U_1=(\mathbb U, S_1,\dots)$ and $\mathbb U_2=(\mathbb U, S_2,\dots)$, each with a single new sort, is given by a common cover of $\mathbb U_1$ and $\mathbb U_2$ of the form $(\mathbb U_1\cup\mathbb U_2,g)$ where $g:S_1\to S_2$ is a new basic function.
(See~$\S$\ref{coversection} for a discussion of unions of covers.)

It may be worth being careful about the language here.
Suppose $L$ is the language of $\mathbb U$ and $L_i$ is the langauge of $\mathbb U_i$ for $i=1,2$.
We form a copy of $L_1$, which we denote by $L_1^{\dom}$, by replacing each symbol $R\in L_1\setminus L$ by the symbol $R^{\dom}$.
Similarly we make a duplicate of $L_2$ which we denote by $L_2^{\ran}$.
We then have that $L_1^{\dom}\cap L_2^{\ran}=L$.
The morphism $(\mathbb U_1\cup\mathbb U_2,g)$ is a structure in the language $L_1^{\dom}\cup L_2^{\ran}\cup\{g\}$, where $g$ is a new function symbol from $S_1^{\dom}$ to $S_2^{\ran}$.
So when we say that $(\mathbb U_1\cup\mathbb U_2,g)$ is a cover of $\mathbb U_1$ and $\mathbb U_2$, we really mean modulo the natural identification of $L_1$ with $L_1^{\dom}$ and $L_2$ with $L_2^{\ran}$.

\begin{remark}
\label{functions-covers}
Of course the morphism is really the theory of $(\mathbb U_1\cup\mathbb U_2,g)$, so that different functions may produce the same morphism. 
But note that $(\mathbb U_1\cup\mathbb U_2, g) \equiv (\mathbb U_1\cup\mathbb U_2, h)$ if and only if $h=\alpha g$ for some $\alpha\in\Aut(\mathbb U_2/\mathbb U)$.
Indeed, by saturation there is an ismorphism $\tau:(\mathbb U_1\cup\mathbb U_2, g) \to (\mathbb U_1\cup\mathbb U_2, h)$, and by stable embeddedness of $\mathbb U_1$ in $(\mathbb U_1\cup\mathbb U_2, g)$ we can take $\tau$ to be the identity on~$\mathbb U_1$.
Then $\alpha:=\tau|_{\mathbb U_2}$ works.
\end{remark}

Composition is defined naturally: given morphisms $(\mathbb U_1\cup\mathbb U_2,g)$ and $(\mathbb U_2\cup\mathbb U_3,h)$, the composition is $(\mathbb U_1\cup\mathbb U_3, h\circ g)$.
To see that this is a well-defined morphism we need to check that it is a cover of $\mathbb U_1$ and $\mathbb U_3$ and that its theory does not depend on the particular choice of $g$ and $h$ (just on the morphisms that $g$ and $h$ determine).
We do this by applying Lemma~\ref{union} to $(\mathbb U_1\cup\mathbb U_2,g)$ and $(\mathbb U_2\cup\mathbb U_3,h)$, which are both covers of~$\mathbb U_2$, at least after re-identifying $L_2$, $L_2^{\ran}$, and $L_2^{\dom}$, as say $L_2^{\operatorname{mid}}$.
We get that $(\mathbb U_1\cup\mathbb U_2\cup\mathbb U_3, g, h)$ has a theory that depends only on the theories of $(\mathbb U_1\cup\mathbb U_2,g)$ and $(\mathbb U_2\cup\mathbb U_3,h)$, and is a cover of both $\mathbb U_1$ and~$\mathbb U_3$.
Now, $(\mathbb U_1\cup\mathbb U_3, h \circ g)$ is a reduct of $(\mathbb U_1\cup\mathbb U_2\cup\mathbb U_3, g, h)$ expanding $\mathbb U_1$ and~$\mathbb U_3$, and so it too has these desired properties.

The identity morphism on a cover $\mathbb U'=(\mathbb U,S,\dots)$ is $(\mathbb U'\cup\mathbb U',\id_S)$, where $\id_S$ is here viewed as a function from $S^{\dom}$ to $S^{\ran}$, which are the two new sort symbols in the language of $\mathbb U'\cup\mathbb U'$, both of which are interpreted in $\mathbb U'\cup\mathbb U'$ as the set $S$.
To see that $(\mathbb U'\cup\mathbb U',\id_S)$ is saturated, let $(\mathbb U_1\cup\mathbb U_2,g)$ be a saturated model of its theory.
Note that while it is no longer the case that on underlying sets $g$ is the identity -- indeed the underlying sets of the new sorts of $\mathbb U_1$ and $\mathbb U_2$ need not be identical -- it is the case that modulo the natural identification of $L'^{\dom}$ with $L'^{\ran}$, $g\cup\id_{\mathbb U}$ is an isomorphism from $\mathbb U_1$ to $\mathbb U_2$.
Also, $\mathbb U_i$ is a cover of $\mathbb U$ of the same theory as $\mathbb U'$, so that by Lemma~\ref{union}, we have an isomorphism $\sigma_1\cup\sigma_2:\mathbb U'\cup\mathbb U'\to \mathbb U_1\cup\mathbb U_2$.
In order to be an isomorphism between $(\mathbb U'\cup\mathbb U',\id_S)$ and $(\mathbb U_1\cup\mathbb U_2, g)$ we would need the following diagram to commute:
\begin{center}
\begin{tikzcd}
S\ar[rr, "\id_S"]\ar[d, "\sigma_1|_{S}"]&&S\ar[d, "\sigma_2|_{S}"]\\
S_1\ar[rr,"g"]&&S_2
\end{tikzcd}
\end{center}

While this may not be the case, $\alpha:=(\sigma_2|_{S})^{-1}\circ g\circ\sigma_1|_{S}$
is such that $\alpha\cup\id_{\mathbb U}$ is an $L'^{\ran}$-automorphism of $\mathbb U'$.
So replacing $\sigma_2$ by $\sigma_2\circ(\alpha\cup\id_{\mathbb U})$, we get that the diagram does commute.
Hence $(\mathbb U'\cup\mathbb U',\id_S)$ is isomorphic to $(\mathbb U_1\cup\mathbb U_2,g)$, and is therefore saturated.
Similar arguments show that $\mathbb U'$, under both the identification of $L'$ with $L'^{\dom}$ and with $L'^{\ran}$, is stably embedded in $(\mathbb U'\cup\mathbb U',\id_S)$.

This defines the category of covers of $\mathbb U$ with one extra
sort.
The following identifies the isomorphisms of this category.

\begin{proposition}
\label{isomorphism}
A morphism $(\mathbb U_1\cup\mathbb U_2,g)$ in this category of covers of $\mathbb U$ by a single sort is an isomorphism if and only if $g\cup\id_{\mathbb U}:\mathbb U_1\to\mathbb U_2$ is a bijective bi-interpretation.
\end{proposition}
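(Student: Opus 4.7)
The plan is to prove both implications by tracking the compositions explicitly and using stable embeddedness to pass between definability in the common cover and $0$-definability in $\mathbb U_1$ or $\mathbb U_2$.

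For the forward direction, suppose $(\mathbb U_1\cup\mathbb U_2,g)$ has a two-sided inverse $(\mathbb U_2\cup\mathbb U_1,h)$. Iterating Lemma~\ref{union}, the triple cover $(\mathbb U_1\cup\mathbb U_2\cup\mathbb U_1,g,h)$ computes both compositions at once. The identity conditions, combined with Remark~\ref{functions-covers}, force $h\circ g\cup\id_\mathbb U$ and $g\circ h\cup\id_\mathbb U$ to be automorphisms $\alpha_1\in\Aut(\mathbb U_1/\mathbb U)$ and $\alpha_2\in\Aut(\mathbb U_2/\mathbb U)$ respectively. From this one reads immediately that $g$ is a bijection; then invoking Remark~\ref{functions-covers} on the morphism $h$ itself, one may replace $h$ by $\alpha_1^{-1}\circ h$, so that on the nose $h=g^{-1}$. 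Hence $g^{-1}$ is $0$-definable in $(\mathbb U_1\cup\mathbb U_2,g)$. To conclude the bi-interpretation, take any $0$-definable $X$ in $\mathbb U_1$; then $g(X)$ is $0$-definable in the cover and lies in $\mathbb U_2$, so by Fact~\ref{seset} it is definable in $\mathbb U_2$; for $0$-definability, observe that every automorphism of $\mathbb U_2$ lifts to the cover by Fact~\ref{sereduct}, hence fixes $g(X)$ setwise, and conclude by saturation. The symmetric argument using $g^{-1}$ handles the other direction of the bi-interpretation.

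For the reverse direction, assume $g\cup\id_\mathbb U$ is a bijective bi-interpretation; I claim that $(\mathbb U_1\cup\mathbb U_2,g)$ is itself a morphism whose inverse is $(\mathbb U_2\cup\mathbb U_1,g^{-1})$. The bi-interpretation says that any $0$-definable set of $\mathbb U_1$ corresponds via $g$ to a $0$-definable set of $\mathbb U_2$, so any formula of $(\mathbb U_1\cup\mathbb U_2,g)$ can, after applying $g$ or $g^{-1}$ to its $S_2$-variables and parameters, be rewritten as a formula over $\mathbb U_1$ alone. This makes saturation and stable embeddedness of $\mathbb U_1$ and $\mathbb U_2$ inherit directly from the cover $\mathbb U_1\cup\mathbb U_2$ provided by Lemma~\ref{union}. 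The compositions $g^{-1}\circ g=\id_{S_1}$ and $g\circ g^{-1}=\id_{S_2}$ give the identity morphisms on the nose.

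The main technical point is the passage from definability in $(\mathbb U_1\cup\mathbb U_2,g)$ to $0$-definability in $\mathbb U_2$, which requires correctly using the automorphism lifting provided by stable embeddedness via Fact~\ref{sereduct}; the rest is careful categorical bookkeeping and manipulation of the ambiguity in function representatives supplied by Remark~\ref{functions-covers}.
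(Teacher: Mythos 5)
Your proof is correct, and while your forward direction is essentially the paper's, your converse takes a genuinely different route. Forward: both you and the paper first get that $g$ is a bijection and then transfer $0$-definability across the cover; the paper simply asserts the bijectivity claim (``it is not hard to verify''), whereas your derivation via Remark~\ref{functions-covers} --- reading off that $h\circ g$ and $g\circ h$ are restrictions of elements of $\Aut(\mathbb U_1/\mathbb U)$ and $\Aut(\mathbb U_2/\mathbb U)$, hence that $g$ is bijective and the inverse morphism is represented by $g^{-1}$ --- actually proves it, which is a nice addition. (One citation slip: the fact you want when passing from definability in the cover to definability in $\mathbb U_2$ is stable embeddedness of the \emph{reduct} $\mathbb U_2$ together with Fact~\ref{sereduct}, not Fact~\ref{seset}, which concerns a $0$-definable set inside a single structure; the content of your argument is unaffected.) Converse: the paper argues semantically --- it takes an abstract saturated model of $\Th(\mathbb U_1\cup\mathbb U_2,g)$, builds an isomorphism to it via Lemma~\ref{union}, and corrects one side by conjugating with the bi-interpretation, at the price that the corrected map is no longer $L_2^{\ran}$-elementary, only $0$-definability-preserving, a subtlety it must then argue is harmless. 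You instead argue syntactically: every formula of $(\mathbb U_1\cup\mathbb U_2,g)$ translates, after applying $g^{-1}$ to $S_2$-variables and parameters, into an $L_1$-formula, so the cover properties reduce to those of $\mathbb U_1$ and the inverse is exhibited on the nose as $(\mathbb U_2\cup\mathbb U_1,g^{-1})$. Two points should be tightened there. First, the translation claim needs its (short) induction on formulas: atomic $L_2^{\ran}$-formulas pull back to $0$-definable sets of $\mathbb U_1$ by the bi-interpretation, the graph of $g$ becomes equality, and quantifiers over $S_2$ become quantifiers over $S_1$ because $g$ is a bijection. Second, the inheritance is really from $\mathbb U_1$ (equivalently from the identity morphism $(\mathbb U_1\cup\mathbb U_1,\id_{S_1})$, already shown to be a cover), not ``directly from the cover $\mathbb U_1\cup\mathbb U_2$'' as you phrase it: saturation does not in general pass to expansions, so the translation is doing all the work. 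With those details filled in your argument is complete; it buys a more elementary converse that never leaves the given model, whereas the paper's approach buys freedom from any induction on syntax.
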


\begin{proof}
It is not hard to verify that in this category a morphism $(\mathbb U_1\cup\mathbb U_2,g)$ is an isomorphism if and only if $g:S_1\to S_2$ is bijective.

Suppose $(\mathbb U_1\cup\mathbb U_2,g)$ is an isomorphism and $X \subseteq S_1^n \times \mathbb U^m$ is $0$-definable.
Then
its image $B = (g \cup \id_{\mathbb U})(A) \subseteq S_2^n \times \mathbb U^m$
is $0$-definable in $(\mathbb U_1\cup\mathbb U_2, g)$.
Hence by stable embeddedness and saturation it is $0$-definable in $\mathbb U_2$.
Similarly, preimages of $0$-definable sets in $\mathbb U_2$ are $0$-definable in
$\mathbb U_1$.
So $g\cup\id_{\mathbb U}$ is a bijective bi-interpretation.

Conversely, suppose $g\cup\id_{\mathbb U}$ be a bijective bi-interpretation.
We need to show that $g$ is
a morphism, i.e. $(\mathbb U_1\cup\mathbb U_2, g)$ is a cover of both $\mathbb U_1$ and $\mathbb U_2$.

For saturation, we let $(\overline{\mathbb U}_1\cup\overline{\mathbb U}_2, \overline g)$ be a saturated model of its theory.
Then Lemma~\ref{union} gives us an $(L_1^{\dom}\cup L_2^{\ran})$-isomorphism $\sigma_1\cup\sigma_2:\mathbb U_1\cup\mathbb U_2\to \overline{\mathbb U}_1\cup\overline{\mathbb U}_2$.
Again, to be an isomorphism from $(\mathbb U_1\cup\mathbb U_2, g)$ to $(\overline{\mathbb U}_1\cup\overline{\mathbb U}_2, \overline g)$ we would need
\begin{center}
\begin{tikzcd}
S_1\ar[rr,"g"]\ar[d, "\sigma_1|_{S_1}"]&&S_2\ar[d, "\sigma_2|_{S_2}"]\\
\overline{S}_1\ar[rr, "\overline g"]&&\overline{S}_2
\end{tikzcd}
\end{center}
to commute.
We can make the diagram commute by replacing $\sigma_2$ by $\sigma_2\circ(\alpha\cup\id_{\mathbb U})$ where $\alpha:=(\sigma_2|_{S_2})^{-1}\circ\overline g\circ\sigma_1|_{S_1}\circ g^{-1}$.
But note that after such an alteration $\sigma_1\cup \sigma_2$ is no longer elementary with respect to $L_2^{\ran}$.
However, $\sigma_1\cup\sigma_2:(\mathbb U_1\cup\mathbb U_2, g)\to (\overline{\mathbb U}_1\cup\overline{\mathbb U}_2,\overline g)$ still has the property that a set is $0$-definable if and only if its image is.
This is enough to conclude that $(\mathbb U_1\cup\mathbb U_2, g)$ is saturated since $(\overline{\mathbb U}_1\cup\overline{\mathbb U}_2, \overline g)$ is.

Next we show that $\mathbb U_1$ is stably embedded in $(\mathbb U_1\cup\mathbb U_2, g)$, modulo the identification of $L_1$ with $L_1^{\dom}$.
Given $\sigma_1\in\Aut(\mathbb U_1)$ we can extend it to $\sigma_1\cup\sigma_2$ on $\mathbb U_1\cup\mathbb U_2$ by Lemma~\ref{union}.
To make it commute with $g$ we would precompose $\sigma_2$ with by $(\sigma_2|_{S_2})^{-1}\circ g\circ\sigma_1|_{S_1}\circ g^{-1}$.
This time the new $\sigma_1\cup\sigma_2$ is elementary, as we are conjugating by the bi-interpretation $g$.
Similarly $(\mathbb U, S_2)$ is stably embedded in $(\mathbb U_1\cup\mathbb U_2, g)$, modulo the identification of $L_2$ with $L_2^{\ran}$.
\end{proof}

We are interested not in arbitrary covers, but internal generalised imaginaries.

\begin{definition}
We denote by $\mathbf {IGI}_T$ the full subcategory generated by the internal generalised imaginary sorts.
\end{definition}

Proposition~\ref{isomorphism} says that isomorphisms in $\mathbf {IGI}_T$ are precisely equivalences of internal generalised imaginary sorts as defined by Hrushovski~\cite{hrushovski} (and discussed in the previous section).

\medskip
\subsection{Morphisms of definable groupoids}
\label{subsectgpdmor}
If we think of a definable groupoid as a category we might be tempted to have the morphisms be functors, maybe satisfying additional properties, but in any case involving a definable function between the sets of objects.
If we focus on the definable groupoid as generalising definable group actions we may expect morphisms to induce definable group homomorphisms between the automorphism groups.
As it turns out, both of these approaches are misleading.
If, as is the case, we view definable groupoids as the intrinsic avatars of internal generalised imaginaries, then we should proceed as follows.

Fix connected groupoids $\mathcal G_1 = (O_{1i}, f_{1m})_{i \in I_1, m \in M_1}$ and $\mathcal G_2 = (O_{2i}, f_{2m})_{i\in I_2, m \in M_2}$ definable in $\mathbb U$.
A {\em definable morphism} $h:\mathcal G_1\to\mathcal G_2$ is a set of uniformly definable functions $\big(h_n : O_{1i_1} \to O_{2i_2}\big)_{n\in N}$ satisfying conditions~(A) and~(B) below.
More precisely, we have a definable set $N$ equipped with a definable map $N\to I_1\times I_2$, and a definable set $\displaystyle h\subseteq N\times O_1\times O_2$, such that for every $n\in N$ living above $(i_1,i_2)\in I_1\times I_2$, the fibre $h_n\subseteq O_{1i_1}\times O_{2i_2}$ is the graph of a function.
We denote by $N(i_1,i_2)$ the fibre of $N\to I_1\times I_2$ above $(i_1,i_2)$, so that $h|_{N(i_1,i_2)}$ is a definable family of definable functions from $O_{1i_1}$ to $O_{2i_2}$.
The conditions we impose on the family $h$ to be a morphism are:
\begin{itemize}
\item[(A)]
For all $p\in N(i_1,i_2), q\in N(j_1,j_2)$,
$$h_{q} \circ \Hom_{\mathcal G_1}(O_{1i_1}, O_{1j_1}) = \Hom_{\mathcal G_2}(O_{2i_2}, O_{2j_2})\circ h_{p}.$$
\item[(B)]
For all $p\in N(i,j)$ and $m\in M_2(j,k)$, there is $q\in N(i,k)$ such that
$$f_{2m}h_p=h_q.$$
\end{itemize}
Note that condition~(B) says $h$ is closed under compositions with bijections from $\Hom_{\mathcal G_2}$.
Given~(A) this implies $h$ is also closed under precompositions with $\Hom_{\mathcal G_1}$.

\begin{remark}
\begin{itemize}
\item[(a)]
To be precise, for us the definable morphism is the set of definable functions and not the definable family.
That is we identify $h$ and~$h'$ if
$\{h_n : n \in N\} = \{h'_{n} : n\in N'\}$.
\item[(b)]
We do not require each $n \in N$ to define a distinct morphism, though this could be easily achieved by elimination of imaginaries.
\end{itemize}
\end{remark}

We now compose morphisms.
Suppose $g : \mathcal G_1 \to \mathcal G_2$ and $h : \mathcal G_2 \to \mathcal G_3$ are definable morphisms between definable connected groupoids, where $g=(g_p)_{p\in P}$ and $h=(h_q)_{q\in Q}$.
Then $h\circ g:\mathcal G_1\to\mathcal G_3$ is given as follows.
Let $N:=P\times_{I_2}Q$ be the fibred product of $P\to I_1\times I_2\to I_2$ and $Q\to I_2\times I_3\to I_2$, and equip it with the natural map $N\to I_1\times I_3$.
Given $n\in N(i_1,i_3)$ we have that for some $i_2\in I_2$, $n=(p,q)$ where $p\in P(i_1,i_2)$ and $q\in Q(i_2,i_3)$.
Then
$$(h\circ g)_n:=h_q\circ g_p:O_{1i_1}\to O_{3i_3}.$$

\begin{lemma}
The composition $h\circ g$ is a definable morphism from $\mathcal G_1$
to $\mathcal G_3$
\end{lemma}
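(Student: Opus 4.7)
The plan is to verify the three things that make $h \circ g$ a definable morphism: (i) it is presented by a definable family of definable functions indexed over a definable set mapping to $I_1 \times I_3$; (ii) it satisfies condition (A); and (iii) it satisfies condition (B). Definability is essentially automatic: the fibred product $N = P \times_{I_2} Q$ is definable because $P \to I_1 \times I_2 \to I_2$ and $Q \to I_2 \times I_3 \to I_2$ are both definable, and the graph of $(h \circ g)_{(p,q)} = h_q \circ g_p$ is cut out of $N \times O_1 \times O_3$ by the formula $\exists y \in O_2\,(g_p(x) = y \wedge h_q(y) = z)$, which is uniformly definable in $(p,q,x,z)$.

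For condition (A), fix $n = (p,q) \in N(i_1,i_3)$ and $n' = (p',q') \in N(j_1,j_3)$, so that $p \in P(i_1,i_2)$, $q \in Q(i_2,i_3)$, $p' \in P(j_1,j_2')$, and $q' \in Q(j_2',j_3)$ for some $i_2,j_2' \in I_2$. Applying condition (A) for $g$ yields
\[
g_{p'} \circ \Hom_{\mathcal G_1}(O_{1i_1}, O_{1j_1}) = \Hom_{\mathcal G_2}(O_{2i_2}, O_{2j_2'}) \circ g_p,
\]
and postcomposing with $h_{q'}$ and then applying condition (A) for $h$ gives
\[
h_{q'} \circ \Hom_{\mathcal G_2}(O_{2i_2}, O_{2j_2'}) \circ g_p = \Hom_{\mathcal G_3}(O_{3i_3}, O_{3j_3}) \circ h_q \circ g_p.
\]
Chaining these two equalities yields (A) for $h \circ g$. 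Since (A) for $g$ and $h$ are genuine set equalities (not just inclusions), no separate argument is needed for the reverse containment.

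For condition (B), given $n = (p,q) \in N(i,k)$ and $m \in M_3(k,\ell)$, we seek $n' \in N(i,\ell)$ with $f_{3m} \circ (h \circ g)_n = (h \circ g)_{n'}$. Write $p \in P(i,j)$ and $q \in Q(j,k)$. Apply condition (B) for $h$ to $q$ and $m$: there exists $q' \in Q(j,\ell)$ with $f_{3m} \circ h_q = h_{q'}$. Setting $n' := (p, q') \in N(i,\ell)$, we obtain
\[
f_{3m} \circ (h \circ g)_n = f_{3m} \circ h_q \circ g_p = h_{q'} \circ g_p = (h \circ g)_{n'},
\]
as required. No step here is genuinely difficult; the only mild subtlety is confirming that (A) for $h \circ g$ really follows from a single application of (A) on each side rather than requiring an auxiliary compatibility, and that the index $p$ can be reused unchanged in the verification of (B) — both are visible once one expands the definitions and tracks the fibres over $I_2$.
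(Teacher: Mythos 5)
Your proof is correct and follows essentially the same route as the paper's: condition (A) is verified by chaining a single application of (A) for $g$ with a single application of (A) for $h$ (correctly allowing the middle indices over $I_2$ to differ for $n$ and $n'$), and condition (B) is verified by applying (B) to $h$ alone and reusing the index $p$ unchanged, exactly as in the paper. Your explicit remarks on definability of the fibred product and on why set equality needs no separate reverse-containment argument are slightly more detailed than the paper's treatment but substantively identical.
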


\begin{proof}
It is more or less clear that $h\circ g$ is a set of uniformly definable functions parameterised by $N\to I_1\times I_3$ with the correct domains and codomains.
We need to check conditions~(A) and~(B).

For~(A) suppose we are given $n\in N(i_1,i_3)$ and $n'\in N(j_1,j_3)$.
Then $n=(p,q)$ and $n'=(p',q')$ where $p\in P(i_1,i_2)$, $q\in Q(i_2,i_3)$, $p'\in P(j_1,j_2)$ and $q'\in Q(j_2,j_3)$, for some $i_2, j_2\in I_2$.
Now,
\begin{eqnarray*}
(hg)_{n'}\Hom_{\mathcal G_1}(O_{1i_1}, O_{1j_1})
&=&
h_{q'}g_{p'} \Hom_{\mathcal G_1}(O_{1i_1}, O_{1j_1})\\
&=&
h_{q'} \Hom_{\mathcal G_2}(O_{2i_2}, O_{2j_2})g_p \ \ \text{by~(A) for $g$}\\
&=&
\Hom_{\mathcal G_3}(O_{3i_3}, O_{3j_3})h_qg_p \ \ \text{by~(A) for $h$}\\
&=&
\Hom_{\mathcal G_3}(O_{3i_3}, O_{3j_3})(hg)_n
\end{eqnarray*}
as desired.

For~(B), given $(hg)_n$ and $f_{3m}$ we seek $(hg)_{n'}$ such that the following commutes:
\begin{center}
\begin{tikzcd}
O_{1i}\ar[dd, "(hg)_n"'] \ar[ddr, dashed, "(hg)_{n'}"] \\
\\
O_{3j}\ar[r,"f_{3m}"] & O_{3k}
\end{tikzcd}
\end{center}
Write $n=(p,q)$ where $p\in P(i,\ell)$ and $q\in Q(\ell,j)$ for some $\ell\in I_2$.
Then the above becomes
\begin{center}
\begin{tikzcd}
O_{1i}\ar[d, "g_p"']  \\
O_{2\ell}\ar[d,"h_q"']\ar[dr, "h_{q'}"]\\
O_{3j}\ar[r,"f_{3m}"] & O_{3k}
\end{tikzcd}
\end{center}
where the existence of $h_{q'}$ is given by~(B) applied to $h$.
So $n':=(p,q')$ will work.
\end{proof}

The identity morphism for a definable connected groupoid $\mathcal G = (O_i, f_m)_{i \in I, m \in M}$ is the
family $f=(f_m)_{m \in M}$ itself.
That $f$ is a definable endomorphism, namely that~(A) and~(B) hold, follows easily from the fact that the $f_m$ are all bijections, and that $f$ is closed under compositions and inverses.
That it really serves as the identity is also straightforward: 
given a definable morphism $h:\mathcal H\to\mathcal G$, that $f\circ h \subseteq h$ is exactly~(B) for $h$, and that $f\circ h\supseteq h$ is by the fact that we have all identities in $f$.
A similar argument shows that $h\circ f=h$ for any definable morphism $h:\mathcal G\to\mathcal H$, using the fact observed before that~(B) holds with precompositions too.

We thus have a category;
let $\mathbf {CGpd}_T$ denote the $0$-definable connected groupoids in $T$ equipped with the $0$-definable morphisms.

\begin{lemma}
\label{iso=bij}
A $0$-definable morphism $h : \mathcal G_1 \to \mathcal G_2$ is an isomorphism in $\mathbf {CGpd}_T$ if and
only if each (equivalently some) $h_n$ is a bijection.
\end{lemma}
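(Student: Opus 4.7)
The plan is first to establish the side-claim that some $h_n$ is a bijection iff every $h_n$ is, and then to handle the two directions of the main equivalence.

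The uniformity claim follows directly from condition~(A) together with connectedness. Given $p\in N(i_1,i_2)$ and $q\in N(j_1,j_2)$, I pick any $\beta\in\Hom_{\mathcal G_2}(O_{2i_2},O_{2j_2})$ (available by connectedness of $\mathcal G_2$); condition~(A) then produces $\alpha\in\Hom_{\mathcal G_1}(O_{1i_1},O_{1j_1})$ with $h_q\circ\alpha = \beta\circ h_p$, so that $h_q = \beta\circ h_p\circ \alpha^{-1}$. Since $\alpha$ and $\beta$ are invertible, $h_p$ is a bijection iff $h_q$ is.

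For the forward direction, suppose $g$ is an inverse of $h$ in $\mathbf{CGpd}_T$. Since $g\circ h$ equals the identity morphism $(f_{1m})_{m\in M_1}$, the set of composed functions $\{g_q\circ h_p\}$ coincides with $\{f_{1m}\}$, and every $f_{1m}$ is a bijection. Hence each composition $g_q\circ h_p$ is a bijection, which forces some $h_p$ to be injective; by uniformity, all $h_n$ are injective. The symmetric argument applied to $h\circ g = \id_{\mathcal G_2}$ yields surjectivity of every $h_n$.

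For the backward direction, assume that each $h_n$ is a bijection. I would define an explicit inverse $g:\mathcal G_2\to\mathcal G_1$ with index set $N$, coordinate-swapped projection $N\to I_2\times I_1$, and $g_n := h_n^{-1}$. The required verifications are (i) that $g$ satisfies (A) and (B), and (ii) that $g\circ h = \id_{\mathcal G_1}$ and $h\circ g = \id_{\mathcal G_2}$ as morphisms in $\mathbf{CGpd}_T$. Condition~(A) for $g$ is obtained by inverting the equality~(A) for $h$; condition~(B) for $g$ amounts to closure of $h$ under precomposition with $\Hom_{\mathcal G_1}$-morphisms, which was observed after~(B) to follow from (A) and~(B) for $h$. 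The identity equalities then reduce to showing that $\{h_q^{-1}\circ h_p\}$ equals $\{f_{1m}\}$, and $\{h_p\circ h_q^{-1}\}$ equals $\{f_{2m}\}$: in each case the inclusion into the $\Hom$-sets is immediate from~(A), and the reverse inclusion uses the pre- and post-composition closures. The main nuisance throughout is bookkeeping to ensure that the fibres $N(i_1,i_2)$ one invokes~(A) and~(B) over are nonempty; this is handled by the fact that $N$ is nonempty, $\mathcal G_1$ and $\mathcal G_2$ are connected, and condition~(B) (with its precomposition dual) transfers nonemptiness across fibres.
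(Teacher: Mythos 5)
Your proof is correct and takes essentially the same route as the paper's: the some-iff-all claim by conjugating with bijections via~(A), the forward direction by noting that the compositions with an inverse morphism coincide with the $\Hom$-sets of the groupoids and hence are bijections (forcing injectivity from $g\circ h=\id_{\mathcal G_1}$ and surjectivity from $h\circ g=\id_{\mathcal G_2}$), and the backward direction by taking the coordinate-flipped family $(h_n^{-1})_{n\in N}$ and verifying~(A), (B), and the two identity equalities via~(A) together with closure under pre- and post-composition. One caveat on your final remark: the precomposition dual of~(B) is itself derived from~(A) and~(B) only over objects of $\mathcal G_1$ where $N$ already has support, so it cannot be used to \emph{create} nonempty fibres $N(i',\cdot)$ for new $i'\in I_1$; this is harmless, since the paper (whose proof silently makes the same choices of elements in fibres such as $N(k,j)$ and $P(j,i)$) implicitly reads the definition of morphism as providing functions out of every object of $\mathcal G_1$, and that is what both your argument and the paper's actually use.
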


\begin{proof}
If some $h_n$ is bijective then so are all the others, since by~(A) they can be obtained from $h_n$ by
pre and post-composing with bijections in $\Hom_{\mathcal G_1}$ or $\Hom_{\mathcal G_2}$.

If every $h_n$ is bijective then it is easy to see that $h^{-1}:=(h^{-1}_n)_{n\in N}$ is a definable morphism parameterised by $N\to I_2\times I_1$, where this is the co-ordinate flip of the map $N\to I_1\times I_2$ that parameterised $h$.
That it is the inverse of $h$ also follows readily from~(A).
For example, if $p\in N(i,j)$ and $q\in N(k,j)$ then from 
$$h_{q} \circ \Hom_{\mathcal G_1}(O_{1i}, O_{1k}) = \Hom_{\mathcal G_2}(O_{2j}, O_{2j})\circ h_{p}$$
we get that $h_ph^{-1}_q\in \Hom_{\mathcal G_1}(O_{1i}, O_{1k})$ and hence is one of the definable functions in the identity endomorphism on $\mathcal G_1$.

Conversely, suppose $g=(g_p)_{p\in P}:\mathcal G_2\to\mathcal G_1$ is the inverse of $h$.
Then for any $n\in N(i,j)$ and $p\in P(j,i)$ we have that $g_ph_n\in \Aut_{\mathcal G_1}(O_{1i})$ is bijective, and so $h_n$ is injective.
But also $h_ng_p\in\Aut_{\mathcal G_2}(O_{2j})$ is bijective, implying $h_n$ is
surjective.
\end{proof}

We want to show that equivalences, as defined by Hrushovski~\cite{hrushovski} and discussed at the end of~$\S$\ref{section-covers-gpd} above, agree with isomorphisms in $\mathbf {CGpd}_T$.
Recall that two definable connected groupoids were said to be equivalent if they jointly embedded into a third definably connected groupoid.
So the key is to show that embeddings in Hrushovski's sense induce isomorphisms in our's.

Recall that a definable embedding from $\mathcal G_1$ to $\mathcal G_2$ is given by a definable injection $\iota : I_1 \to I_2$ and a definable family of definable bijections $(h_i : O_{1i} \to O_{2\iota(i)})_{i \in I_1}$
such that $h_j \circ \Hom_{\mathcal
G_1}(O_{1i}, O_{1j}) = \Hom_{\mathcal G_2}(O_{2\iota(i)}, O_{2\iota(j)}) \circ h_i$.
We can produce from this an isomorphism $\widetilde h=(\widetilde h_n)_{n\in N}:\mathcal G_1\to\mathcal G_2$ such that 
$$\widetilde h|_{N(i_1,i_2)}=\Hom_{\mathcal G_2}(O_{2\iota(i_1)},O_{2i_2})\circ h_{i_1}$$
To do so, consider $M_2\to I_2$ which picks out the domain of a bijection in $
\mathcal G_2$, and let $N$ be the fibred product of $M_2\to I_2$ with $\iota:I_1\to I_2$.
We have the natural map $N\to I_1\times I_2$.
Given $n\in N(i_1,i_2)$ write $n=(m,i_1)$ where $m\in M_2(\iota(i_1),i_2)$, and define $\widetilde h_n:=f_{2m}\circ h_{i_1}:O_{1i_1}\to O_{2i_2}$.

\begin{proposition}
Suppose $\mathcal G_1$ and $\mathcal G_2$ are definable connected groupoids and $(\iota, h_i)_{i\in I_1}$ is a definable embedding of $\mathcal G_1$ into $\mathcal G_2$.
The family of definable maps $\widetilde h$ given above is an isomorphism from $\mathcal G_1$ to $\mathcal G_2$ in $\mathbf {CGpd}_T$.
\end{proposition}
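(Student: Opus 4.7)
The plan is to verify directly that $\widetilde{h}$ satisfies conditions (A) and (B) for being a definable morphism, and then invoke Lemma~\ref{iso=bij} to upgrade this to an isomorphism. The bijectivity hypothesis of that lemma will come for free, since every $\widetilde{h}_n$ is by construction a composition $f_{2m}\circ h_{i_1}$ of two bijections (the embedding provides bijectivity of the $h_i$, and the $f_{2m}$ are groupoid morphisms hence bijective).

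For condition (A), I would fix $p=(m,i_1)\in N(i_1,i_2)$ and $q=(m',j_1)\in N(j_1,j_2)$, and unwind both sides of
$$\widetilde h_{q}\circ\Hom_{\mathcal G_1}(O_{1i_1},O_{1j_1})=\Hom_{\mathcal G_2}(O_{2i_2},O_{2j_2})\circ\widetilde h_{p}.$$
On the left, using the defining property of a definable embedding, $h_{j_1}\circ\Hom_{\mathcal G_1}(O_{1i_1},O_{1j_1})=\Hom_{\mathcal G_2}(O_{2\iota(i_1)},O_{2\iota(j_1)})\circ h_{i_1}$, so the left-hand side becomes $f_{2m'}\circ\Hom_{\mathcal G_2}(O_{2\iota(i_1)},O_{2\iota(j_1)})\circ h_{i_1}$. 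Post-composing the entire $\Hom$-set with the fixed groupoid morphism $f_{2m'}:O_{2\iota(j_1)}\to O_{2j_2}$ yields exactly $\Hom_{\mathcal G_2}(O_{2\iota(i_1)},O_{2j_2})$. On the right, pre-composing the $\Hom$-set with the fixed morphism $f_{2m}:O_{2\iota(i_1)}\to O_{2i_2}$ also yields $\Hom_{\mathcal G_2}(O_{2\iota(i_1)},O_{2j_2})$, so both sides agree after applying $h_{i_1}$. This is the key bookkeeping step but it is routine once the parameters are identified.

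For condition (B), given $p=(m,i_1)\in N(i_1,i_2)$ and $m''\in M_2(i_2,k)$, I would take the composite $f_{2m''}\circ f_{2m}$, which by the groupoid structure on $\mathcal G_2$ equals $f_{2m'''}$ for some $m'''\in M_2(\iota(i_1),k)$. Then $q:=(m''',i_1)\in N(i_1,k)$ witnesses (B), since $\widetilde h_q=f_{2m'''}\circ h_{i_1}=f_{2m''}\circ f_{2m}\circ h_{i_1}=f_{2m''}\circ\widetilde h_p$.

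Neither step presents a genuine obstacle; the main subtlety is really just organising the indices in $N=M_2\times_{I_2}I_1$ so that the compositions live in the correct $\Hom$-sets. Once (A) and (B) are verified, Lemma~\ref{iso=bij} applies because each $\widetilde h_n$ is a bijection, giving the desired isomorphism in $\mathbf{CGpd}_T$.
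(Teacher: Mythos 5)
Your proposal is correct and follows essentially the same route as the paper: both reduce to verifying conditions (A) and (B) and then invoke Lemma~\ref{iso=bij} via the built-in bijectivity of each $\widetilde h_n = f_{2m}\circ h_{i_1}$; for (A) your two-step absorption of $f_{2m'}$ and $f_{2m}$ into the $\Hom$-sets is exactly what the paper packages as ``$f_2$ satisfies (A)'', and your (B) argument (composing $f_{2m''}\circ f_{2m}$ inside $M_2$ and reindexing) is identical to the paper's. No gaps.
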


\begin{proof}
As each $\widetilde h_n$ is bijective by definition, Lemma~\ref{iso=bij} tells us that it suffices to show that $\widetilde h$ is a morphism, namely that it satisfies~(A) and~(B).

For~(A), fix $p\in N(i_1,i_2)$ and $q\in N(j_1,j_2)$.
Then $p=(m,i_1)$ for some $m\in M_2(\iota(i_1),i_2)$, and $q=(m',j_1)$ for some $m'\in M_2(\iota(j_1),j_2)$, and we have
\begin{eqnarray*}
\widetilde h_{q}\Hom_{\mathcal G_1}(O_{1i_1}, O_{1j_1})
&=&
f_{2m'}h_{j_1} \Hom_{\mathcal G_1}(O_{1i_1}, O_{1j_1}) \ \ \text{ by definition of }\widetilde h\\
&=&
f_{2m'} \Hom_{\mathcal G_2}(O_{2\iota(i_1)}, O_{2\iota(j_1)})h_{i_1} \ \ \text{ as $h$ is an embedding}\\
&=&
\Hom_{\mathcal G_2}(O_{2\iota(i_1)}, O_{2\iota(j_1)})f_{2m}h_{i_1} \ \ \text{ as $f_2$ satisfies~(A)}\\
&=&
\Hom_{\mathcal G_2}(O_{2\iota(i_1)}, O_{2\iota(j_1)})\widetilde h_p
\end{eqnarray*}
as desired.

To show~(B), we are given $p\in N(i,j)$ and $m\in M_2(j,k)$, and we seek $q\in N(i,k)$ such~that
\begin{center}
\begin{tikzcd}
O_{1i}\ar[dd, "\widetilde h_p"'] \ar[ddr, dashed, "\widetilde h_q"] \\
\\
O_{2j}\ar[r,"f_{2m}"] & O_{2k}
\end{tikzcd}
\end{center}
commutes.
Writing $p=(m',i)$ for some $m'\in M_2(\iota(i),j)$ the above becomes
\begin{center}
\begin{tikzcd}
O_{1i}\ar[d, "h_i"']  \\
O_{2\iota(i)}\ar[d,"f_{2m'}"']\ar[dr, "f_\ell"]\\
O_{2j}\ar[r,"f_{2m}"] & O_{2k}
\end{tikzcd}
\end{center}
where $f_\ell:=f_{2m}f_{2m'}$.
So $q:=(\ell,i)$ will work.
\end{proof}

\begin{corollary}
Equivalent definable connected groupoids in the sense of Hrushovski are isomorphic in $\mathbf {CGpd}_T$.
\end{corollary}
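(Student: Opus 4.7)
The plan is to invoke the preceding proposition twice. By Hrushovski's definition of equivalence, if $\mathcal G_1$ and $\mathcal G_2$ are equivalent connected $0$-definable groupoids, then there is a $0$-definable connected groupoid $\mathcal G$ together with $0$-definable embeddings $(\iota_i, h_{ij})_{j\in I_i}: \mathcal G_i \to \mathcal G$ for $i=1,2$, in the sense spelled out at the end of Section~\ref{section-covers-gpd}.

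Applying the previous proposition to each embedding produces, for $i=1,2$, a $0$-definable morphism $\widetilde h^{(i)} : \mathcal G_i \to \mathcal G$ in $\mathbf{CGpd}_T$ whose constituent functions are bijections between the relevant fibres. By Lemma~\ref{iso=bij}, each $\widetilde h^{(i)}$ is an isomorphism in $\mathbf{CGpd}_T$.

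Finally, since $\mathbf{CGpd}_T$ is a category, isomorphisms are closed under composition and inversion, so the composite $(\widetilde h^{(2)})^{-1}\circ\widetilde h^{(1)}$ is a $0$-definable isomorphism $\mathcal G_1 \to \mathcal G_2$ in $\mathbf{CGpd}_T$. The only thing one might want to verify explicitly is that the inverse $(\widetilde h^{(2)})^{-1}$ is itself $0$-definable, but this was already observed in the proof of Lemma~\ref{iso=bij}: it is given fibrewise by inverting each bijection $\widetilde h^{(2)}_n$ and reindexing the parameter map $N\to I_2\times I\!\!$ by coordinate flip. There is no real obstacle; the content lies entirely in the previous proposition, and the corollary is essentially a formal consequence of the fact that $\mathbf{CGpd}_T$ is a genuine category in which the maps produced from Hrushovski embeddings are invertible.
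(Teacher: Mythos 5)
Your proposal is correct and is precisely the argument the paper intends: the corollary is stated as an immediate consequence of the preceding proposition, obtained exactly as you describe by turning the two Hrushovski embeddings $\mathcal G_i \to \mathcal G$ into isomorphisms in $\mathbf{CGpd}_T$ and composing one with the inverse of the other, the $0$-definability of the inverse being handled in the proof of Lemma~\ref{iso=bij}.
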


The converse is harder to see directly.
It will follow for finitely faithful definable connected groupoids from the equivalence of categories that we prove below.

We will restrict our attention to the finitely faithful case.
Let $\mathbf {FCGpd}_T$ denote the full subcategory of $\mathbf {CGpd}_T$ generated by the $0$-definable finitely faithful connected groupoids.

\medskip
\subsection{The functor $G:\mathbf {IGI}_T\to \mathbf {FCGpd}_T$}
Given an internal generalised imaginary sort $\mathbb U'=(\mathbb U, S,\dots)$,
Theorem~\ref{groupoid-construction} associated to it the liason groupoid $\mathcal G$ in $\mathbb U$.
Let us write $\mathcal G$ as $G(\mathbb U')$.
Since $G(\mathbb U')$ is a $0$-definable finitely faithful connected groupoid in $\mathbb U$, this makes $G$ a function from the objects of  $\mathbf {IGI}_T$ to the objects of $\mathbf {FCGpd}_T$.
Our aim here is to make $G$ into a covariant functor between these categories.

To do so, we have to also consider the one-object extension of $\mathcal G$ which was called $\mathcal G'$ in Theorem~\ref{groupoid-construction}; it was a definable connected groupoid in $\mathbb U'$ whose additional object was $S$ and which had the property that  $\Aut_{\mathcal G'}(S)=\Aut(\mathbb U'/\mathbb U)$ as groups actiong on $S$.
 We will denote $\mathcal G'$ as $G'(\mathbb U')$.

\begin{remark}
Actually the groupoid associated to the cover $\mathbb U'$ is not
unique; so we are using the axiom of choice to fix a particular one.
Also, the groupoid is really associated to the cover $\Th(\mathbb U')$; so
denoting it by $G(\mathbb U')$ is an abuse of notation.
\end{remark}

Suppose $g : S_1\to S_2$ is a morphism of internal generalised imaginaries $\mathbb U_1,\mathbb U_2$.
Let $\mathcal G_1:= G(\mathbb U_1)$ and $\mathcal G_1':=G'(\mathbb U_1)$.
We write $\mathcal G_1=(O_{1i}, f_{1m})_{i\in I_1, m\in M_1}$ and $\mathcal G_1'=(O_{1i}, f_{1m})_{i\in I_1', m\in M_1'}$ where $I_1'=I_1\cup\{*\}$ and $O_{1*}=S_1$.
We make similar notational conventions to talk about $\mathcal G_2:= G(\mathbb U_2)$ and $\mathcal G_2':=G'(\mathbb U_2)$.
We define $G(g)$ to be the family of functions $O_{1i_1}\to O_{2i_2}$, as $i_1$ ranges in $I_1$ and $i_2$ ranges in~$I_2$, of the form $\alpha_2\circ g\circ\alpha_1$ where $\alpha_1\in\Hom_{\mathcal G_1'}(O_{1i_1}, S_1)$ and  $\alpha_2\in\Hom_{\mathcal G_2'}(S_2,O_{2i_2})$.

It is clear that $G(g)$ is a $0$-definable family of definable function in $(\mathbb U_1\cup\mathbb U_2,g)$.
But as the individual functions are between definable sets in $\mathbb U$, and as $\mathbb U$ is stably embedded in $(\mathbb U_1\cup\mathbb U_2,g)$, we get that the family $G(g)$ is $0$-definable in $\mathbb U$.
Let $N\to I_1\times I_2$ be a $0$-definable parameterising set for $G(g)$.

\begin{lemma}
$G(g)$ is a morphism from $G(\mathbb U_1)$ to $G(\mathbb U_2)$ that depends only on $\operatorname{Th}(\mathbb U_1\cup\mathbb U_2,g)$.
\end{lemma}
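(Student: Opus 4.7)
The plan is to verify the two morphism conditions (A) and (B) for $G(g)$, and then separately show invariance under replacing $g$ by any $g'$ with the same theory. The $0$-definability of $G(g)$ in $\mathbb U$ has already been argued just before the lemma via stable embeddedness, so that part is not what the proof needs to establish.

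I would handle condition (B) first, since it is the more mechanical. Given $h_p = \alpha_2 \circ g \circ \alpha_1 \in G(g)$ with $\alpha_1 \in \Hom_{\mathcal G_1'}(O_{1i}, S_1)$ and $\alpha_2 \in \Hom_{\mathcal G_2'}(S_2, O_{2j})$, and given $f_{2m}$ with $m \in M_2(j,k)$, one simply observes that $f_{2m} \circ h_p = (f_{2m} \circ \alpha_2) \circ g \circ \alpha_1$ and that $f_{2m} \circ \alpha_2 \in \Hom_{\mathcal G_2'}(S_2, O_{2k})$, so the composite lies in $G(g)$ by construction, giving the required $q$.

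For condition (A), the key observation is that both sides of the equation reduce to the same set, namely
$$\bigl\{\tilde\alpha_2 \circ g \circ \tilde\alpha_1 : \tilde\alpha_1 \in \Hom_{\mathcal G_1'}(O_{1i_1}, S_1),\ \tilde\alpha_2 \in \Hom_{\mathcal G_2'}(S_2, O_{2j_2})\bigr\}.$$
Writing $h_q = \alpha_2' \circ g \circ \alpha_1'$ with $\alpha_1' \in \Hom_{\mathcal G_1'}(O_{1j_1}, S_1)$ and $\alpha_2' \in \Hom_{\mathcal G_2'}(S_2, O_{2j_2})$, a composite $h_q \circ \beta$ with $\beta \in \Hom_{\mathcal G_1}(O_{1i_1}, O_{1j_1})$ is $\alpha_2' \circ g \circ (\alpha_1' \circ \beta)$, which clearly lies in the displayed set. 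Conversely, any $\tilde\alpha_1 \in \Hom_{\mathcal G_1'}(O_{1i_1}, S_1)$ can be written as $\alpha_1' \circ \beta$: by connectedness of $\mathcal G_1$ there exists some $\beta \in \Hom_{\mathcal G_1}(O_{1i_1}, O_{1j_1})$, and then setting $\alpha_1' := \tilde\alpha_1 \circ \beta^{-1}$ gives the factorisation. The right-hand side of (A) is treated symmetrically, using connectedness of $\mathcal G_2$ (equivalently $\mathcal G_2'$) to factor $\tilde\alpha_2$ through $O_{2i_2}$.

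For the claim that $G(g)$ depends only on $\operatorname{Th}(\mathbb U_1 \cup \mathbb U_2, g)$, I would invoke Remark \ref{functions-covers}: any other $g'$ with the same theory satisfies $g' = \alpha \circ g$ for some $\alpha \in \Aut(\mathbb U_2/\mathbb U)$. By Theorem \ref{groupoid-construction}, $\Aut(\mathbb U_2/\mathbb U) = \Aut_{\mathcal G_2'}(S_2)$, so $\alpha \in \Hom_{\mathcal G_2'}(S_2, S_2)$. Hence every element $\alpha_2 \circ g' \circ \alpha_1$ of $G(g')$ equals $(\alpha_2 \circ \alpha) \circ g \circ \alpha_1$, with $\alpha_2 \circ \alpha \in \Hom_{\mathcal G_2'}(S_2, O_{2i_2})$, so $G(g') \subseteq G(g)$; the reverse inclusion is the same computation with $\alpha^{-1}$.

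I do not anticipate any serious obstacle; the only bookkeeping risk is in condition (A), where one must be careful that connectedness of the extended groupoids $\mathcal G_i'$ (not just of $\mathcal G_i$) is what allows the factorisations through the distinguished objects $S_1$ and $S_2$, but this is already built into the construction of $\mathcal G_i'$ in Theorem \ref{groupoid-construction}.
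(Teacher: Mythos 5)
Your handling of condition (B) and of the invariance under $\operatorname{Th}(\mathbb U_1\cup\mathbb U_2,g)$ is correct and coincides with the paper's proof. The problem is condition (A), which is where the real content of the lemma lies. Your ``key observation'' --- that $G(g)_q\circ\Hom_{\mathcal G_1}(O_{1i_1},O_{1j_1})$ coincides with the set of all $\tilde\alpha_2\circ g\circ\tilde\alpha_1$ with \emph{both} factors varying --- is precisely what needs proof, and your argument for the nontrivial inclusion is circular: to write an arbitrary $\tilde\alpha_1$ as $\alpha_1'\circ\beta$ you ``set $\alpha_1':=\tilde\alpha_1\circ\beta^{-1}$'', but $\alpha_1'$ is not free to be chosen; it was fixed as part of the decomposition $G(g)_q=\alpha_2'\circ g\circ\alpha_1'$ of the single fixed function $G(g)_q$. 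More seriously, your factorisation only manipulates the inner factor and never confronts the outer one: every element of $G(g)_q\circ\Hom_{\mathcal G_1}(O_{1i_1},O_{1j_1})$ has the \emph{same} outer map $\alpha_2'$, while an element of your displayed set carries an arbitrary $\tilde\alpha_2\in\Hom_{\mathcal G_2'}(S_2,O_{2j_2})=\alpha_2'\circ\Aut_{\mathcal G_2'}(S_2)$. Matching the two sets therefore requires absorbing an arbitrary $\gamma\in\Aut_{\mathcal G_2'}(S_2)$ into the inner factor, i.e.\ producing $\delta\in\Aut_{\mathcal G_1'}(S_1)$ with $\gamma\circ g=g\circ\delta$, and no purely groupoid-theoretic bookkeeping yields this.

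Indeed, nothing in your argument for (A) uses that $(\mathbb U_1\cup\mathbb U_2,g)$ is a common cover; it would apply verbatim to an arbitrary function $g:S_1\to S_2$ compatible with the objects, for which the claimed set equality is false. For instance, if $g$ were a constant function with value $s$, and $\Aut_{\mathcal G_2'}(S_2)$ does not fix $s$, then $G(g)_q\circ\Hom_{\mathcal G_1}(O_{1i_1},O_{1j_1})$ would consist of the single constant function with value $\alpha_2'(s)$, whereas your displayed set would contain the distinct constant functions with values $\tilde\alpha_2(s)$. The missing ingredient --- the heart of the paper's proof --- is the transfer of automorphisms across $g$ via stable embeddedness: writing $G(g)_p=\alpha_2\, g\,\alpha_1$ and $G(g)_q=\beta_2\, g\,\beta_1$, the discrepancy $\beta_1\gamma_1\alpha_1^{-1}$ attached to a given $\gamma_1\in\Hom_{\mathcal G_1}(O_{1i_1},O_{1j_1})$ lies in $\Aut_{\mathcal G_1'}(S_1)=\Aut(\mathbb U_1/\mathbb U)$, and since $\mathbb U_1$ is stably embedded in $(\mathbb U_1\cup\mathbb U_2,g)$ it extends to some $\sigma\in\Aut(\mathbb U_1\cup\mathbb U_2,g)$. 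Because $g$ is a symbol of that structure, $\sigma$ commutes with $g$, so $g\,\beta_1\gamma_1\alpha_1^{-1}=\sigma|_{S_2}\circ g$ with $\sigma|_{S_2}\in\Aut(\mathbb U_2/\mathbb U)=\Aut_{\mathcal G_2'}(S_2)$; then $\gamma_2:=\beta_2\,\sigma|_{S_2}\,\alpha_2^{-1}\in\Hom_{\mathcal G_2}(O_{2i_2},O_{2j_2})$ satisfies $\gamma_2\circ G(g)_p=G(g)_q\circ\gamma_1$. Only with this step inserted (and its mirror image for the other inclusion) does your set-theoretic reformulation of (A) go through.
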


\begin{proof}
Suppose $h : S_1 \to S_2$ is such that $ (\mathbb U_1\cup\mathbb U_2,g)\equiv
(\mathbb U_1\cup\mathbb U_2,h)$.
Then, by~\ref{functions-covers}, $h=\gamma g$ for some $\gamma\in \Aut(\mathbb U_2/\mathbb U)$.
By definition of the liason groupoid we have that $\gamma\in\Aut_{\mathcal G_2'}(S_2)$.
Hence, for any $i_2\in I_2$, $\Hom_{\mathcal G_2'}(S_2,O_{2i_2})\gamma=\Hom_{\mathcal G_2'}(S_2,O_{2i_2})$, and so $G(\gamma g)$ yields the same family of functions as $G(g)$.

To show that $G(g)$ is a morphism we need to verify~(A) and~(B).
For~(A), given $p\in N(i_1,i_2)$ and $q\in N(j_1,j_2)$ we will show that 
$$G(g)_{q} \circ \Hom_{\mathcal G_1}(O_{1i_1}, O_{1j_1}) \subseteq \Hom_{\mathcal G_2}(O_{2i_2}, O_{2j_2})\circ G(g)_{p}$$
and the symmetry in the argument will take care of the other containment.
Given $\gamma_1\in \Hom_{\mathcal G_1}$ we seek $\gamma_2\in \Hom_{\mathcal G_2}$ such that
\begin{center}
\begin{tikzcd}
O_{1i_1}\ar[d, "G(g)_p"]\ar[r,"\gamma_1"]&O_{1j_1}\ar[d, "G(g)_q"] \\
O_{2i_2}\ar[r,dashed, "\gamma_2"] & O_{2j_2}
\end{tikzcd}
\end{center}
commutes.
By definition of $G(g)$ this becomes
\begin{center}
\begin{tikzcd}
S_1\ar[d,"g"]&O_{1i_1}\ar[l,"\alpha_1"']\ar[d, "G(g)_p"]\ar[r,"\gamma_1"]&O_{1j_1}\ar[d, "G(g)_q"]\ar[r, "\beta_1"]& S_1\ar[d,"g"]\\
S_2\ar[r, "\alpha_2"']&O_{2i_2}& O_{2j_2}&S_2\ar[l,"\beta_2"]
\end{tikzcd}
\end{center}
Now $\beta_1\gamma_1\alpha_1^{-1}\in\Aut_{\mathcal G_1'}(S_1)=\Aut(\mathbb U_1/\mathbb U)$ and hence extends by stable embeddedness to some $\sigma\in\Aut(\mathbb U_1\cup\mathbb U_2,g)$.
So
\begin{center}
\begin{tikzcd}
S_1\ar[d,"g"]&O_{1i_1}\ar[l,"\alpha_1"']\ar[d, "G(g)_p"]\ar[r,"\gamma_1"]&O_{1j_1}\ar[d, "G(g)_q"]\ar[r, "\beta_1"]& S_1\ar[d,"g"]\\
S_2\ar[rrr, bend right, "\sigma|_{S_2}"]\ar[r, "\alpha_2"']&O_{2i_2}& O_{2j_2}&S_2\ar[l,"\beta_2"]
\end{tikzcd}
\end{center}
commutes.
Hence, $\gamma_2:=\beta_2\sigma|_{S_2}\alpha_2^{-1}$ works.
Note that $\sigma|_{S_2}\in\Aut(\mathbb U_2/\mathbb U)=\Aut_{\mathcal G_2'}(S_2)$ so that $\gamma_2$ is in $\Hom_{\mathcal G_2}$.

For~(B), we are given $p\in N(i,j)$ and $m\in M_2(j,k)$ and seek $q\in N(i,k)$ such~that
\begin{center}
\begin{tikzcd}
O_{1i}\ar[dd, "G(g)_p"'] \ar[ddr, dashed, "G(g)_q"] \\
\\
O_{2j}\ar[r,"f_{2m}"] & O_{2k}
\end{tikzcd}
\end{center}
commutes.
Unravelling, this becomes
\begin{center}
\begin{tikzcd}
S_1\ar[dd,"g"']&O_{1i}\ar[l,"\alpha_1"]\ar[dd, "G(g)_p"']\\
\\
S_2\ar[r,"\alpha_2"]&O_{2j}\ar[r,"f_{2m}"] & O_{2k}
\end{tikzcd}
\end{center}
Since $\beta_2:=f_{2m}\alpha_2\in\Hom_{\mathcal G_2'}(S_2,O_{2k})$, letting $q$ be such that $G(g)_q=\beta_2g\alpha_1$ works.
\end{proof}

\begin{proposition}
$G$ is a covariant functor from $\mathbf {IGI}_T$ to $\mathbf {FCGpd}_T$.
\end{proposition}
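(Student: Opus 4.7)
The previous lemma already established that $G$ sends a morphism $(\mathbb U_1 \cup \mathbb U_2, g)$ to a well-defined morphism $G(g) : G(\mathbb U_1) \to G(\mathbb U_2)$ in $\mathbf{FCGpd}_T$. To complete the proof that $G$ is a covariant functor it remains to verify that $G$ preserves identities and composition.

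For identities, the plan is to unravel the definition. Recall that the identity morphism on $\mathbb U' = (\mathbb U, S, \dots)$ is $(\mathbb U' \cup \mathbb U', \id_S)$, and that the identity endomorphism of $\mathcal G = G(\mathbb U')$ is the family $(f_m)_{m \in M}$ of all morphisms of $\mathcal G$. By definition, $G(\id_S)$ consists of the functions $\alpha_2 \circ \id_S \circ \alpha_1 = \alpha_2 \circ \alpha_1$ where $\alpha_1 \in \Hom_{\mathcal G'}(O_i, S)$ and $\alpha_2 \in \Hom_{\mathcal G'}(S, O_j)$ range over all $i, j \in I$. Any such composite lies in $\Hom_{\mathcal G}(O_i, O_j)$ by closure under composition. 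Conversely, given any $\gamma \in \Hom_{\mathcal G}(O_i, O_j)$, connectedness of $\mathcal G'$ provides some $\alpha_1 \in \Hom_{\mathcal G'}(O_i, S)$, and then $\alpha_2 := \gamma \circ \alpha_1^{-1} \in \Hom_{\mathcal G'}(S, O_j)$ witnesses $\gamma = \alpha_2 \alpha_1$. Hence $G(\id_{\mathbb U'}) = \id_{G(\mathbb U')}$.

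For composition, suppose $(\mathbb U_1 \cup \mathbb U_2, g)$ and $(\mathbb U_2 \cup \mathbb U_3, h)$ are morphisms, with composite $(\mathbb U_1 \cup \mathbb U_3, h \circ g)$ realised inside the common cover $(\mathbb U_1 \cup \mathbb U_2 \cup \mathbb U_3, g, h)$ provided by Lemma~\ref{union}. A typical element of $G(h) \circ G(g)$ has the form
\[
\beta_3 \circ h \circ \beta_2 \circ \alpha_2 \circ g \circ \alpha_1,
\]
where $\alpha_1 \in \Hom_{\mathcal G_1'}(O_{1i_1}, S_1)$, $\alpha_2 \in \Hom_{\mathcal G_2'}(S_2, O_{2i_2})$, $\beta_2 \in \Hom_{\mathcal G_2'}(O_{2i_2}, S_2)$, $\beta_3 \in \Hom_{\mathcal G_3'}(S_3, O_{3i_3})$. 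The composite $\beta_2 \circ \alpha_2$ lies in $\Aut_{\mathcal G_2'}(S_2) = \Aut(\mathbb U_2/\mathbb U)$ by Theorem~\ref{groupoid-construction}. By stable embeddedness of $\mathbb U_2$ in $(\mathbb U_1 \cup \mathbb U_2 \cup \mathbb U_3, g, h)$ (Lemma~\ref{union}, applied iteratively), this automorphism extends to some $\widetilde \sigma$ of the whole common cover that fixes $\mathbb U$ pointwise. Since $g$ is a basic function, $\widetilde\sigma$ commutes with $g$, yielding $\beta_2 \alpha_2 \circ g = g \circ \widetilde\sigma|_{S_1}$ with $\tau_1 := \widetilde\sigma|_{S_1} \in \Aut(\mathbb U_1/\mathbb U) = \Aut_{\mathcal G_1'}(S_1)$. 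Thus the displayed expression rewrites as $\beta_3 \circ h \circ g \circ (\tau_1 \alpha_1)$; setting $\gamma_1 := \tau_1 \alpha_1 \in \Hom_{\mathcal G_1'}(O_{1i_1}, S_1)$ and $\gamma_3 := \beta_3$ exhibits it as an element of $G(h \circ g)$.

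The reverse inclusion is immediate: given $\gamma_3 \circ (h \circ g) \circ \gamma_1 \in G(h \circ g)$, pick any $i_2 \in I_2$ and any $\alpha_2 \in \Hom_{\mathcal G_2'}(S_2, O_{2i_2})$, and set $\alpha_1 = \gamma_1$, $\beta_3 = \gamma_3$, $\beta_2 = \alpha_2^{-1}$; the corresponding element of $G(h) \circ G(g)$ equals $\gamma_3 h g \gamma_1$. Hence $G(h \circ g) = G(h) \circ G(g)$, and $G$ is a covariant functor. The main technical hurdle is the first inclusion above, where the ability to push the interposed element of $\Aut(\mathbb U_2/\mathbb U)$ across $g$ relies crucially on stable embeddedness of $\mathbb U_2$ in the triple cover; without this step the two families would not visibly coincide.
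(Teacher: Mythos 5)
Your proposal is correct and takes essentially the same route as the paper: the heart of both arguments is the two-inclusion proof of $G(h\circ g)=G(h)\circ G(g)$, where the easy inclusion uses $\beta_2:=\alpha_2^{-1}$ and the substantive one views $\beta_2\alpha_2$ as an element of $\Aut_{G'(\mathbb U_2)}(S_2)=\Aut(\mathbb U_2/\mathbb U)$, extends it by stable embeddedness to an automorphism of a common cover, and pushes it across $g$ to absorb it into $\Aut_{G'(\mathbb U_1)}(S_1)$. The only differences are cosmetic: you extend to the triple cover rather than to $(\mathbb U_1\cup\mathbb U_2,g)$ as the paper does, and you spell out preservation of identities, which the paper leaves implicit.
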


\begin{proof}
It remians only to show that if $(\mathbb U_1\cup\mathbb U_2,g)$ and $(\mathbb U_2\cup\mathbb U_3,h)$ are morphisms between internal generalised imaginaries then
$$G(h\circ g)=G(h)\circ G(g).$$
A function in the left-hand-side is of the form $\alpha_3hg\alpha_1$ while one in the right is of the form $\alpha_3h\beta_2\alpha_2g\alpha_1$ -- here $\alpha_1\in\Hom_{G'(\mathbb U_1)}$ with codomain~$S_1$, $\alpha_2\in \Hom_{G'(\mathbb U_2)}$ with domain $S_2$, $\beta_2\in \Hom_{G'(\mathbb U_2)}$ with codomain~$S_2$, $\alpha_3\in\Hom_{G'(\mathbb U_3)}$ with domain $S_3$, and all the compositions make sense.
Given $\alpha_3hg\alpha_1$ on the left-hand-side, to see that it is of the form $\alpha_3h\beta_2\alpha_2g\alpha_1$ we can take any $\alpha_2:S_2\to O_{i_2}$ in $G'(\mathbb U_2)$, for any $i_2\in I_2$, and let $\beta_2:=\alpha_2^{-1}$.

For the converse, suppose we are given $\alpha_3h\beta_2\alpha_2g\alpha_1\in G(h)\circ G(g)$.
Then $\beta_2\alpha_2\in \Aut_{G'(\mathbb U_2)}(S_2)=\Aut(\mathbb U_2/\mathbb U)$ and hence extends by $\mathbb U_2$ being stably embedded in $(\mathbb U_1\cup\mathbb U_2,g)$ to an automorphism  $\sigma$ of the latter.
So $\beta_2\alpha_2g=g\sigma|_{S_1}$.
Hence $\alpha_3h\beta_2\alpha_2g\alpha_1=\alpha_3hg\sigma|_{S_1}\alpha_1$.
Since $\sigma|_{S_1}\in\Aut(\mathbb U_1/\mathbb U)=\Aut_{G'(\mathbb U_1)}(S_1)$, we have that $\sigma|_{S_1}\alpha_1\in\Hom_{G'(\mathbb U_1)}$ with codomain~$S_1$, so $\alpha_3hg\sigma|_{S_1}\alpha_1$ has the desired form to be in $G(h\circ g)$.
\end{proof}

\medskip
\subsection{The functor $C:\mathbf {FCGpd}_T\to\mathbf {IGI}_T$}
\label{Cfunctor}
Recall that Theorems~\ref{cover-construction} and~\ref{cover-construction-refined} associated to any $0$-definable finitely faithful connected groupoid $\mathcal G$ in $\mathbb U$ an internal generalised imaginary sort denoted there by $\mathbb U''=(\mathbb U,O_*,\dots)$.
Let us relabel this cover as $C(\mathcal G)$, but retain $O_*$ as the name of the new sort.
So $C$ is a function from the objects of $\mathbf {FCGpd}_T$ to the objects of $\mathbf {IGI}_T$, which we now make into a covariant functor.

Let us first recall that we have a one-object extension of $\mathcal G$ denoted by $\mathcal G'$, which is a definable connected groupoid in $C(\mathcal G)$ whose new object is $O_{*}$, and with the property that $\Aut_{\mathcal G'}(O_*)=\Aut\big(C(\mathcal G)/\mathbb U)$ as groups acting on $O_*$.

Now suppose we have two $0$-definable finitely faithful connected groupoids in~$\mathbb U$, $\mathcal G_1 = (O_{1i}, f_{1m})_{i \in I_1, m \in M_1}$ and $\mathcal G_2 = (O_{2i}, f_{2m})_{i \in I_2, m \in M_2}$, and $h : \mathcal G_1 \to \mathcal G_2$ is a $0$-definable morphism between them.
We obtain covers $C(\mathcal G_1)=(\mathbb U,O_{1*},\dots)$ and $C(\mathcal G_2)=(\mathbb U,O_{2*},\dots)$.
Recall from the construction on page~\pageref{constructiondetails} that
$$O_{1*}=O_{1i_1}\times\{0\}$$
for some fixed $i_1\in I_1$, and
$$O_{2*}=O_{2i_2}\times\{0\}$$
for some fixed $i_2\in I_2$.
Let $p\in N(i_1,i_2)$ be arbitrary so that $h_p:O_{1i_1}\to O_{2i_2}$, and define $C(h):=h_p\times\{0\}:O_{1*}\to O_{2*}$.

\begin{lemma}
\label{c(h)}
$C(h)$ determines a morphism from $C(\mathcal G_1)$ to $C(\mathcal G_2)$ that depends only on $h$ and not on the choices of $i_1,i_2,p$.
\end{lemma}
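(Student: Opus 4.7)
The plan is to split the claim into two parts: first, showing that $(C(\mathcal{G}_1)\cup C(\mathcal{G}_2), C(h))$ is a genuine morphism in $\mathbf{IGI}_T$, i.e.\ a saturated cover of both $C(\mathcal{G}_1)$ and $C(\mathcal{G}_2)$; and second, verifying that its theory does not depend on the choices of $i_1$, $i_2$, or $p\in N(i_1,i_2)$.

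For the cover property, my strategy is to mirror the arguments in Proposition~\ref{isomorphism}. Saturation follows by starting with a saturated model of the theory, invoking Lemma~\ref{union} to obtain an isomorphism $\sigma_1\cup\sigma_2$ on $C(\mathcal{G}_1)\cup C(\mathcal{G}_2)$ that fixes $\mathbb U$, and then composing $\sigma_2$ with a suitable element of $\Aut(C(\mathcal{G}_2)/\mathbb U)$ to force the square with $C(h)$ to commute. This correction is available because, by Theorem~\ref{cover-construction-refined}, $\Aut(C(\mathcal{G}_2)/\mathbb U)=\Aut_{\mathcal{G}_2'}(O_{2*})$ acts transitively (on the one generator needed) on the choices of function realising $C(h)$. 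Stable embeddedness of each $C(\mathcal{G}_i)$ in the expansion is established analogously: extend a given automorphism of $C(\mathcal{G}_i)$ via Lemma~\ref{union}, then repair commutativity with $C(h)$ by composing with an automorphism over $\mathbb U$.

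For independence of $p$ with $i_1$ and $i_2$ fixed, I apply condition~(A) of the definition of a definable morphism with $(j_1,j_2)=(i_1,i_2)$: for any other $p'\in N(i_1,i_2)$, taking the identity of $\Hom_{\mathcal{G}_1}(O_{1i_1},O_{1i_1})$ on the left yields $h_{p'}=\beta\circ h_p$ for some $\beta\in\Aut_{\mathcal{G}_2}(O_{2i_2})$. Under the identification $O_{2*}=O_{2i_2}\times\{0\}$, the bijection $\beta\times\{0\}$ belongs to $\Aut_{\mathcal{G}_2'}(O_{2*})=\Aut(C(\mathcal{G}_2)/\mathbb U)$, so by Remark~\ref{functions-covers} the two choices $h_p\times\{0\}$ and $h_{p'}\times\{0\}$ define the same morphism in $\mathbf{IGI}_T$. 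Independence of $i_1$ and of $i_2$ is handled by appealing to Theorem~\ref{cover-construction}, which guarantees that $C(\mathcal{G}_1)$ and $C(\mathcal{G}_2)$ are canonically determined up to isomorphism over $\mathbb U$, combined with a further application of condition~(A) to check that $C(h)$ transports correctly through such a canonical isomorphism.

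The main obstacle I anticipate is the bookkeeping in part~(i): one must be certain that the automorphism used to repair commutativity with $C(h)$ does not destroy the fact that $\sigma_1\cup\sigma_2$ is elementary for the full $L$-language of $C(\mathcal{G}_1)\cup C(\mathcal{G}_2)$. This reduces to the observation that the correction automorphism lies in $\Aut(C(\mathcal{G}_2)/\mathbb U)$, hence preserves the theory of $C(\mathcal{G}_2)$, and that its action on $\mathbb U$ is trivial, so it does not affect the coherence on the shared reduct; this is precisely the content of the identification $\Aut_{\mathcal{G}_2'}(O_{2*})=\Aut(C(\mathcal{G}_2)/\mathbb U)$ afforded by Theorem~\ref{cover-construction-refined}.
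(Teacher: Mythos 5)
Your overall architecture (well-definedness of the theory, saturation via Lemma~\ref{union} plus a correcting automorphism, stable embeddedness via extension plus correction) matches the paper's proof, and your part~(ii) is sound and close to the paper's: the paper treats a general change of data $(j_1,j_2,q)$ all at once by using condition~(A) to produce $\alpha_2$ with $h_q\alpha_1=\alpha_2 h_p$ and letting $\alpha_1\cup\alpha_2$ induce an isomorphism of the two constructions over $\mathbb U$, which subsumes your separate handling of $p$ and of $(i_1,i_2)$. The genuine gap is at the crux of part~(i). You assert that the needed correction exists because, ``by Theorem~\ref{cover-construction-refined}, $\Aut\big(C(\mathcal G_2)/\mathbb U\big)=\Aut_{\mathcal G_2'}(O_{2*})$ acts transitively on the choices of function realising $C(h)$.'' Theorem~\ref{cover-construction-refined} contains no such transitivity statement; it only identifies the automorphism group. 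That transitivity claim, while true, \emph{is} the entire content of the saturation step, and proving it is where all the work lies. Note also that your plan to ``mirror the arguments in Proposition~\ref{isomorphism}'' cannot work as stated: there the correction is written as a conjugation $(\sigma_2|_{S_2})^{-1}\circ\overline g\circ\sigma_1|_{S_1}\circ g^{-1}$, which requires $g$ to be a bijection, whereas $C(h)=h_p\times\{0\}$ is an arbitrary definable function, in general neither injective nor surjective, so no such formula for the correction exists.

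What is actually needed --- and what the paper does --- involves two ingredients that never appear in your part~(i). First, the theory of $\big(C(\mathcal G_1)\cup C(\mathcal G_2),C(h)\big)$ records the existence of a commuting square $C(h)=\alpha_2\circ h_p\circ\alpha_1^{-1}$ with $\alpha_1\in\Hom_{\mathcal G_1'}(O_{1i_1},O_{1*})$, $\alpha_2\in\Hom_{\mathcal G_2'}(O_{2i_2},O_{2*})$, and $h_p$ a fixed $0$-definable function of $\mathbb U$; hence the saturated model carries corresponding $\overline\alpha_1,\overline\alpha_2$ with $\overline{C(h)}=\overline\alpha_2\circ h_p\circ\overline\alpha_1^{-1}$. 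Second, condition~(A) of the definable morphism $h$ is what pushes the resulting discrepancy $\widetilde\alpha_1:=\overline\alpha_1^{-1}\sigma_1\alpha_1\in\Aut_{\mathcal G_1}(O_{1i_1})$ across the non-invertible $h_p$: (A) supplies $\widetilde\alpha_2\in\Aut_{\mathcal G_2}(O_{2i_2})$ with $h_p\widetilde\alpha_1=\widetilde\alpha_2 h_p$, and then $\beta:=\sigma_2^{-1}\overline\alpha_2\widetilde\alpha_2\alpha_2^{-1}\in\Aut_{\mathcal G_2'}(O_{2*})=\Aut\big(C(\mathcal G_2)/\mathbb U\big)$ is exactly the correction making the square commute. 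The same mechanism --- condition~(A) transferring automorphisms of $O_{1i_1}$ to automorphisms of $O_{2i_2}$ through $h_p$ --- is what makes the stable-embeddedness repairs work, and you never invoke it there either. Finally, you omit stable embeddedness of $\mathbb U$ itself in $\big(C(\mathcal G_1)\cup C(\mathcal G_2),C(h)\big)$, which the paper establishes first (by applying the well-definedness paragraph to $\sigma(i_1),\sigma(i_2),\sigma(p)$) precisely in order to reduce stable embeddedness of $C(\mathcal G_i)$ to automorphisms over $\mathbb U$; without that reduction your repair step does not even make sense for automorphisms moving $\mathbb U$. Until the transitivity assertion is replaced by this (or an equivalent) argument, part~(i) is unproven.
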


\begin{proof}
First let us check that $C(h)$, or rather $\operatorname{Th} \big(C(\mathcal G_1)\cup C(\mathcal G_2), C(h)\big)$, is well defined.
So let $j_1 \in I_1$, $j_2 \in I_2$ and $q\in N(j_1,j_2)$ be different choices.
Let $\overline{C(\mathcal G_1)}$ denote the internal cover constructed exactly as $C(\mathcal G_1)$ but using $j_1$ instead of $i_1$.
We already know by Theorem~\ref{cover-construction} that $C(\mathcal G_1)$ and $\overline{C(\mathcal G_1)}$ are isomorphic over $\mathbb U$; in fact such an isomorphism is induced by fixing any $\alpha_1 \in\Hom_{\mathcal G_1}(O_{i_1}, O_{j_1})$.
By~(A), there is $\alpha_2 \in \Hom_{\mathcal G_2}(O_{2i_2}, O_{2j_2})$ such that $h_q\alpha_1=\alpha_2h_p$.
Now, $\alpha_2$ induces an isomorphism of $C(\mathcal G_2)$ and $\overline{C(\mathcal G_2)}$ over $\mathbb U$.
The commutativity condition above implies that $\alpha_1 \cup \alpha_2$ induces an isomorphism from $\big(C(\mathcal G_1)\cup C(\mathcal G_2), h_p\times\{0\}\big)$ to $\big(\overline{C(\mathcal G_1)}\cup \overline{C(\mathcal G_2)}, h_q\times\{0\}\big)$.

Toward proving that $\big(C(\mathcal G_1)\cup C(\mathcal G_2), C(h)\big)$ is saturated, let us first recall from the construction that we have
\begin{center}
\begin{tikzcd}
O_{1i_1}\ar[r, "h_p"]\ar[d, "\alpha_1"'] & O_{2i_2} \ar[d, "\alpha_2"]\\
O_{1*}\ar[r, "C(h)"]& O_{2*}
\end{tikzcd}
\end{center}
for some $\alpha_1\in\Hom_{\mathcal G_1'}(O_{1i_1},O_{1*})$ and $\alpha_2\in\Hom_{\mathcal G_2'}(O_{2i_2},O_{2*})$.
Indeed, $\alpha_1=\id_{O_{1i_1}}\times\{0\}$ and $\alpha_2=\id_{O_{2i_2}}\times\{0\}$, though of course this particular representation is not known to the theory of the morphism $C(h)$.

Now let $\big(\overline{C(\mathcal G_1)}\cup \overline{C(\mathcal G_2)}, \overline{C(h)}\big)$ be a saturated model of $\operatorname{Th}\big(C(\mathcal G_1)\cup C(\mathcal G_2), C(h)\big)$.
By Lemma~\ref{union} we know that $C(\mathcal G_1)\cup C(\mathcal G_2)$ is a saturated model of the complete theory $\operatorname{Th}\big(C(\mathcal G_1)\big)\cup \operatorname{Th}\big(C(\mathcal G_2)\big)$, and so we have an isomorphism
$$\sigma_1\cup\sigma_2:C(\mathcal G_1)\cup C(\mathcal G_2)\to\overline{C(\mathcal G_1)}\cup \overline{C(\mathcal G_2)}.$$
We may as well identify the copies of $\mathbb U$ in both so that $\sigma_1\cup\sigma_2$ is the identity on~$\mathbb U$.
On the other hand, the existence of the above commuting square is part of the theory of the morphism $C(h)$, and so we have
\begin{center}
\begin{tikzcd}
O_{1i_1}\ar[r, "h_n"]\ar[d, "\overline \alpha_1"'] & O_{2i_2} \ar[d, "\overline \alpha_2"]\\
\overline O_{1*}\ar[r, "\overline{C(h)}"]& \overline O_{2*}
\end{tikzcd}
\end{center}
for some $\overline \alpha_1\in\Hom_{\overline{\mathcal G_1'}}(O_{1i_1},\overline O_{1*})$ and $\overline \alpha_2\in\Hom_{\overline{\mathcal G_2'}}(O_{2i_2},\overline O_{2*})$.
Since $\sigma_1^{-1}$ is an isomorphism from $\overline{C(\mathcal G_1)}$ to $C(\mathcal G_1)$ that is the identity on $\mathbb U$, and $\overline\alpha_1\in\Hom_{\overline{\mathcal G_1'}}$ has its domain in $\mathbb U$, we get that
$\sigma^{-1}\overline\alpha_1\in \Hom_{\mathcal G_1'}$.
Hence, so is $\overline \alpha_1^{-1}\sigma_1$.
It follows that
$\widetilde \alpha_1:=\overline \alpha_1^{-1}\sigma_1\alpha_1\in\Aut_{\mathcal G_1}(O_{1i_1})$.
By~(A), there is $\widetilde \alpha_2\in \Aut_{\mathcal G_2}(O_{2i_2})$ such that $h_p\widetilde\alpha_1=\widetilde\alpha_2h_p$.
Now, since $\sigma_2^{-1}$ is an isomorphism from $\overline{C(\mathcal G_2)}$ to $C(\mathcal G_2)$ that is the identity $\mathbb U$, we have that $\beta:=\sigma_2^{-1}\overline \alpha_2\widetilde \alpha_2\alpha_2^{-1}\in\Aut_{\mathcal G_2'}(O_{2*})=\Aut\big(C(\mathcal G_2)/\mathbb U)$.
A final diagram chase, shows that if we replace $\sigma_2$ with $\sigma_2\beta$ then
\begin{center}
\begin{tikzcd}
O_{1*}\ar[r, "C(h)"]\ar[d, "\sigma_1"'] & O_{2*} \ar[d, "\sigma_2"]\\
\overline O_{1*}\ar[r, "\overline{C(h)}"] & \overline O_{2*}
\end{tikzcd}
\end{center}
commutes.
That is,
$$\sigma_1\cup\sigma_2:\big(C(\mathcal G_1)\cup C(\mathcal G_2), C(h)\big)\to\big(\overline{C(\mathcal G_1)}\cup \overline{C(\mathcal G_2)}, \overline{C(h)}\big)$$
is an isomorphism, and so $\big(C(\mathcal G_1)\cup C(\mathcal G_2), C(h)\big)$ is saturated.

Note that $\mathbb U$ is stably embedded in $\big(C(\mathcal G_1)\cup C(\mathcal G_2), C(h)\big)$.
Indeed, if $\sigma \in \Aut(\mathbb U)$ then it induces an isomorphism between $\big(C(\mathcal G_1)\cup C(\mathcal G_2), C(h)\big)$ and the analogous construction with $\sigma(i_1),\sigma(i_2),\sigma(p)$ rather than $i_1,i_2,p$.
But by the first paragraph of this proof, this is isomorphic to $\big(C(\mathcal G_1)\cup C(\mathcal G_2), C(h)\big)$ over $\mathbb U$.
Composing these isomorphism we get an automorphism of $\big(C(\mathcal G_1)\cup C(\mathcal G_2), C(h)\big)$ that extends $\sigma$.

Finally, we show that $C(\mathcal G_1)$ is stably embedded in $\big(C(\mathcal G_1)\cup C(\mathcal G_2), C(h)\big)$.
Suppose $\sigma\in\Aut\big(C(\mathcal G_1)\big)$.
By the previous paragraph it suffices to consider $\sigma\in \Aut\big(C(\mathcal G_1)/\mathbb U\big)= \Aut_{\mathcal G_1'}(O_{1*})$.
So $\sigma$ is induced by some $\alpha_1\in \Aut_{\mathcal G_1}(O_{1i_1})$.
By~(A), we can find $\alpha_2 \in \Aut_{\mathcal
G_2}(O_{2i_2})$ such that $h_p\alpha_1=\alpha_2h_p$.
Now, $\alpha_2$ induces some $\sigma_2\in\Aut_{\mathcal G_2'}(O_{2*})=\Aut\big(C(\mathcal G_2)/\mathbb U\big)$.
The above commuting property ensures that $\sigma\cup\sigma_2$ is an automorphism of $\big(C(\mathcal G_1)\cup C(\mathcal G_2), C(h)\big)$.
A similar argument shows that $C(\mathcal G_2)$ is stably embedded in $\big(C(\mathcal G_1)\cup C(\mathcal G_2), C(h)\big)$.
\end{proof}

\begin{proposition}
$C$ is a covariant functor from $\mathbf {FCGpd}_T$ to $\mathbf {IGI}_T$.
\end{proposition}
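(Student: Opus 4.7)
The plan is to verify the two functor axioms -- preservation of identities and preservation of composition. Lemma~\ref{c(h)} has already done the genuine work by showing that $h\mapsto C(h)$ is well defined on morphisms and independent of the auxiliary choices of $i_1,i_2$ and $p\in N(i_1,i_2)$. That independence is precisely what will let me match representatives compatibly on both sides of each axiom, so both axioms should reduce to trivial computations at the level of underlying functions.

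For identities, I recall that $\id_{\mathcal G}$ in $\mathbf{CGpd}_T$ is the full family $f=(f_m)_{m\in M}$ of morphisms of $\mathcal G$. Let $i\in I$ be the distinguished object used to form $O_*=O_i\times\{0\}$. Since $\mathcal G$ is a groupoid, there is some $p\in M(i,i)$ with $f_p=\id_{O_i}$, and by Lemma~\ref{c(h)} I am free to compute $C(\id_{\mathcal G})$ using $i_1=i_2=i$ together with this $p$. The result is $C(\id_{\mathcal G})=\id_{O_i}\times\{0\}=\id_{O_*}$, which is the identity morphism on $C(\mathcal G)$ in $\mathbf{IGI}_T$.

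For composition, let $g=(g_p)_{p\in P}:\mathcal G_1\to\mathcal G_2$ and $h=(h_q)_{q\in Q}:\mathcal G_2\to\mathcal G_3$ be $0$-definable morphisms, and let $i_1,i_2,i_3$ be the distinguished objects used to form $O_{1*},O_{2*},O_{3*}$. From $\S$\ref{subsectgpdmor}, $h\circ g$ has parameter space $N:=P\times_{I_2}Q$ with $(h\circ g)_{(p,q)}=h_q\circ g_p$. Using Lemma~\ref{c(h)} I will compute $C(g)$ with a representative $p\in P(i_1,i_2)$ and $C(h)$ with a representative $q\in Q(i_2,i_3)$, the same $i_2$ on both sides; then $(p,q)\in N(i_1,i_3)$ is an admissible representative for $C(h\circ g)$, and by the definition of composition in $\mathbf{IGI}_T$ ($\S$\ref{subsectcovmor}),
$$C(h)\circ C(g)=(h_q\times\{0\})\circ(g_p\times\{0\})=(h_q\circ g_p)\times\{0\}=(h\circ g)_{(p,q)}\times\{0\}=C(h\circ g).$$

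The only subtle point is the dependence of $C(h)$ on the auxiliary choices, and this has already been absorbed by Lemma~\ref{c(h)}. Given the freedom to choose representatives consistently, both functor axioms collapse to elementary function-composition identities, so I do not expect any real obstacle beyond invoking the previous lemma correctly.
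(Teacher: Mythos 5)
Your proof is correct and follows essentially the same route as the paper: both arguments reduce everything to Lemma~\ref{c(h)} and verify composition by choosing representatives $p\in P(i_1,i_2)$ and $q\in Q(i_2,i_3)$ over the same distinguished $i_2$, so that $(p,q)$ is an admissible representative for $C(h\circ g)$ and the equality collapses to $(h_q\times\{0\})\circ(g_p\times\{0\})=(h_q\circ g_p)\times\{0\}$. Your explicit check of identity preservation, done by invoking the independence of the choice of $p$ to pick $p\in M(i,i)$ with $f_p=\id_{O_i}$, is a step the paper leaves implicit, but it is carried out exactly in the spirit of the paper's argument.
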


\begin{proof}
It remains to show that if $g:\mathcal G_1\to\mathcal G_2$ and $h:\mathcal G_2\to\mathcal G_3$ are morphisms of definable connected groupoids, then $C(h\circ g)=C(h)\circ C(g)$.
Using $i_1,i_2,p$ for $C(g)$ and $i_2,i_3,q$ for $C(h)$ we get that the right-hand-side is given by $(h_q\times\{0\})\circ(g_p\times\{0\})$ where $g_p:O_{1i_1}\to O_{2i_2}$ and $h_q:O_{2i_2}\to O_{2i_3}$.
But now we can use $i_1,i_3, n=(p,q)$ so that the left-hand-side is given by $(h\circ g)_n\times\{0\}$.
As $(h\circ g)_n=h_q\circ g_p$ by definition, these agree.
\end{proof}

\medskip
\subsection{The equivalence}

\begin{theorem}
\label{equivcat}
The functors $G$ and $C$ defined above establish an equivalence between the categories $\mathbf {IGI}_T$ and $\mathbf {FCGpd}_T$.
\end{theorem}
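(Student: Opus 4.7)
The strategy is to establish Theorem~\ref{equivcat} by exhibiting natural isomorphisms $\eta : \id_{\mathbf{IGI}_T} \Rightarrow C \circ G$ and $\epsilon : G \circ C \Rightarrow \id_{\mathbf{FCGpd}_T}$. Theorem~\ref{correspondence} already supplies the object-level content: $CG(\mathbb{U}')$ and $\mathbb{U}'$ are equivalent as internal generalised imaginary sorts, and $GC(\mathcal{G})$ and $\mathcal{G}$ are equivalent as finitely faithful connected groupoids. Proposition~\ref{isomorphism} upgrades the first equivalence to an isomorphism in $\mathbf{IGI}_T$, and the (unlabelled) proposition in~$\S$\ref{subsectgpdmor} converting Hrushovski-embeddings into isomorphisms upgrades the second to an isomorphism in $\mathbf{FCGpd}_T$. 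The work lies in choosing these isomorphisms canonically enough to organise into natural transformations.

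To construct $\eta_{\mathbb{U}'}$ explicitly, let $\mathcal{G}' = G'(\mathbb{U}')$ be the extended liason groupoid, so $S$ is an object of $\mathcal{G}'$. The construction of $CG(\mathbb{U}')$ on page~\pageref{constructiondetails} picks an object $O_i$ of $\mathcal{G} = G(\mathbb{U}')$ and introduces a new sort $O_* = O_i \times \{0\}$. By connectedness of $\mathcal{G}'$, pick $\alpha \in \Hom_{\mathcal{G}'}(O_i, S)$ and let $\eta_{\mathbb{U}'}$ be the morphism induced by $g : S \to O_*$, $s \mapsto (\alpha^{-1}(s), 0)$. That this is an isomorphism reduces, via Proposition~\ref{isomorphism}, to the bi-interpretability of $\mathbb{U}'$ and $CG(\mathbb{U}')$ via $g \cup \id_{\mathbb{U}}$; this follows by direct comparison because both structures are determined, in the sense of Theorem~\ref{cover-construction-refined}, by the same groupoid data $\mathcal{G}'$, with $S$ and $O_*$ playing identical structural r\^oles under the identification $\alpha$.

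Dually, to construct $\epsilon_{\mathcal{G}}$, I would use Theorem~\ref{cover-construction-refined} to recognise the extended groupoid $\mathcal{G}'$ from the construction of $C(\mathcal{G})$ as a $0$-definable connected groupoid in $C(\mathcal{G})^{\eq}$ containing both $\mathcal{G}$ (as the subgroupoid on objects in $\mathbb{U}$) and, after unravelling the $O_{b'}$-objects of Theorem~\ref{groupoid-construction}, also $GC(\mathcal{G})$. Both $\mathcal{G} \hookrightarrow \mathcal{G}'$ and $GC(\mathcal{G}) \hookrightarrow \mathcal{G}'$ are then Hrushovski-embeddings into a common $0$-definable extension, so the proposition in~$\S$\ref{subsectgpdmor} yields an isomorphism $\mathcal{G} \cong GC(\mathcal{G})$ in $\mathbf{FCGpd}_T$; take $\epsilon_{\mathcal{G}}$ to be the appropriate direction, with inverse supplied by Lemma~\ref{iso=bij}. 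Naturality of $\eta$ then follows by direct computation: given $f : \mathbb{U}_1 \to \mathbb{U}_2$ in $\mathbf{IGI}_T$, Remark~\ref{functions-covers} reduces the naturality square to agreement of the underlying functions up to postcomposition with a liason-group element, and since $G(f) = (\alpha_2 \circ f \circ \alpha_1)_{\alpha_1, \alpha_2}$ ranges over all valid compositions, one may pick representatives aligned with the $\alpha$'s defining $\eta_{\mathbb{U}_1}$ and $\eta_{\mathbb{U}_2}$ to obtain literal equality. Naturality of $\epsilon$ is analogous, exploiting that morphisms in $\mathbf{FCGpd}_T$ are \emph{sets} of bijections rather than particular parameterised families, which provides the analogous slack.

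The main obstacle is the bookkeeping in the naturality verifications: the construction of $C$ fixes a basepoint $i \in I$, while the construction of $G$ depends on a type $p$ and a formula $\psi$ selected during the saturation argument of Theorem~\ref{groupoid-construction}. For naturality of $\epsilon$ in particular, these choices must be reconciled with the action of a morphism $h : \mathcal{G}_1 \to \mathcal{G}_2$ on the various identifications, and coherence is only guaranteed after passing to the appropriate equivalence classes of representatives. The flexibility built into both categories --- postcomposition by liason elements for cover morphisms, and looseness of the defining family for groupoid morphisms --- is precisely what makes the naturality diagrams commute, but verifying this cleanly is the most delicate part of the proof.
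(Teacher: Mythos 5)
Your overall skeleton (two natural isomorphisms, with $\eta_{\mathbb{U}'}$ built from a morphism $\alpha$ of the extended liason groupoid, sending $s \mapsto (\alpha^{-1}(s),0)$) is the paper's, and your $\eta$ is literally the paper's $\eta_{\mathbb{V}} = \alpha\times\{0\}$. But there are two genuine gaps. The more serious one is in your construction of $\epsilon$: you claim that the one-object extension $\mathcal{G}'$ arising in the construction of $C(\mathcal{G})$ contains, ``after unravelling the $O_{b'}$-objects,'' the liason groupoid $GC(\mathcal{G})$, so that $\mathcal{G}$ and $GC(\mathcal{G})$ are jointly embedded in the single groupoid $\mathcal{G}'$. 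This is false: the objects of $GC(\mathcal{G})$ produced by Theorem~\ref{groupoid-construction} are sets $O_{b'}$ cut out by a formula $\psi$ obtained from a compactness argument, and they are not objects of $\mathcal{G}'$ (whose only objects are those of $\mathcal{G}$ together with $O_*$). What is true---and is the paper's key observation---is that there are \emph{two distinct} one-object extensions living in $C(\mathcal{G})$, namely $\mathcal{G}'$ (extending $\mathcal{G}$) and $G'C(\mathcal{G})$ (extending $GC(\mathcal{G})$), sharing only the object $O_*$, together with the identity $\Aut_{\mathcal{G}'}(O_*) = \Aut\big(C(\mathcal{G})/\mathbb{U}\big) = \Aut_{G'C(\mathcal{G})}(O_*)$ as groups acting on $O_*$. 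The paper defines $\epsilon_{\mathcal{G}}$ as the family $\Hom_{G'C(\mathcal{G})}(O_*,O_{2i_2})\circ\Hom_{\mathcal{G}'}(O_{1i_1},O_*)$, and it is precisely this coincidence of vertex groups that makes conditions (A) and (B) hold for the composite family; without it the hom-sets do not match up into a morphism at all. Moreover, this family is a priori only $0$-definable in the cover $C(\mathcal{G})$, and one must invoke stable embeddedness of $\mathbb{U}$ to descend its definability to $\mathbb{U}$---a step your joint-embedding picture also requires but never mentions, since morphisms of $\mathbf{FCGpd}_T$ must be definable in $T$, not in the cover.

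The second gap concerns $\eta$: you dispose of the fact that $g\cup\id_{\mathbb{U}}$ is a bi-interpretation (hence, via Proposition~\ref{isomorphism}, a morphism and an isomorphism) by ``direct comparison because both structures are determined by the same groupoid data.'' That is where the actual work lies. Appealing to Theorem~\ref{correspondence} gives that $\mathbb{U}'$ and $CG(\mathbb{U}')$ are equivalent via \emph{some} bijection, but not that your specific $g$ realises the equivalence; the paper instead proves directly that $(\mathbb{U}'\cup CG(\mathbb{U}'),\eta_{\mathbb{U}'})$ is saturated and that both reducts are stably embedded in it, by the arguments of Lemma~\ref{c(h)}, resting on $\Aut_{G'(\mathbb{U}')}(S)=\Aut(\mathbb{U}'/\mathbb{U})$ and $\Aut_{G(\mathbb{U}')'}(O_*)=\Aut\big(CG(\mathbb{U}')/\mathbb{U}\big)$, after which bijectivity yields the isomorphism. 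Relatedly, your diagnosis that the delicate part is the naturality bookkeeping is inverted relative to the paper: naturality of $\eta$ is a short computation with theories of morphisms (and naturality of $\epsilon$ is even left to the reader there); the substance of the proof is in verifying that the candidate maps are morphisms of the respective categories in the first place.
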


\begin{proof}
We first construct a natural isomorphism of functors $\eta:\id_{\mathbf{IGI}_T} \to CG$.
That is, for each internal generalised imaginary sort $\mathbb V$ of $\mathbb U$ (the reader will see in a minute why for the purposes of this proof we use $\mathbb V$ rather the usual terminology of $\mathbb U'$) we construct an isomorphism of covers 
$\eta_{\mathbb V}:\mathbb V\to CG(\mathbb V)$ such that 
\begin{center}
\begin{tikzcd}
\mathbb U_1\ar[r, "\eta_{\mathbb U_1}"]\ar[d, "g"] & CG(\mathbb U_1) \ar[d, "CG(g)"]\\
\mathbb U_2\ar[r, "\eta_{\mathbb U_2}"] & CG(\mathbb U_2)\\
\end{tikzcd}
\end{center}
commutes, for every morphism $g:\mathbb U_1\to\mathbb U_2$ in $\mathbf {IGI}_T$.

Given $\mathbb V=(\mathbb U, S,\dots)$ let $\mathcal G:=G(\mathbb V)=(O_i,f_m)_{i\in I, m\in M}$ be the liason groupoid in $\mathbb U$.
Then $CG(\mathbb V)=C(\mathcal G)$ is another cover of $\mathbb U$ whose new sort  is $O_*:=O_i\times\{0\}$ for some fixed $i\in I$.
We have two one-object groupoid extensions of $\mathcal G$:
in $\mathbb V$ we have $G'(\mathbb V)$ whose extra object is $S$, whereas in $CG(\mathbb V)$ we have $G(\mathbb V)'$ whose additional object is $O_*$.
Now, fix $\alpha\in \Hom_{G'(\mathbb V)}(S,O_i)$, and define
$$\eta_{\mathbb V}:=\alpha\times\{0\}:S\to O_*,$$
where we mean here the function given by $x\mapsto (\alpha(x),0)$.
That $\eta_{\mathbb V}$ does not depend on the choice of $i$ and $\alpha$ follows from by now familiar methods using that $\Aut_{G'(\mathbb V)}(S)=\Aut(\mathbb V/\mathbb U)$.
Saturation of $(\mathbb V\cup CG(\mathbb V), \eta_{\mathbb V})$ follows very much in the same way as the saturation argument in the proof of Lemma~\ref{c(h)}, indeed somewhat more simply, using this time that $\Aut_{G(\mathbb V)'}(O_*)=\Aut(CG(\mathbb V)/\mathbb U)$.
It is similarly not hard to deduce, following the methods of the proof of Lemma~\ref{c(h)}, that both $\mathbb V$ and $CG(\mathbb V)$ are stably embedded in $(\mathbb V\cup CG(\mathbb V), \eta_{\mathbb V})$.
This shows that $\eta_{\mathbb V}$ is indeed a morphism of covers.
It is an isomorphism because it is bijective.

We now check that the above square commutes for any morphism $g:S_1\to S_2$ of internal generalised imaginaries $\mathbb U_1=(\mathbb U,S_1,\dots)$ and $\mathbb U_2=(\mathbb U,S_2,\dots)$.
Let $G(\mathbb U_1)=(O_{1i},f_{1m})_{i\in I_1, m\in M_1}$ and fix $i_1\in I_1$ so that the new sort of $CG(\mathbb U_1)$ is $O_{1*}:=O_{1i_1}\times\{0\}$.
Let $\alpha_1\in\Hom_{G'(\mathbb U_1)}(S_1,O_{1i_1})$ be such that $\eta_{\mathbb U_1}=\alpha_1\times\{0\}$.
Make similar notational conventions and choices for $G(\mathbb U_2)$, $CG(\mathbb U_2)$, and~$\eta_{\mathbb U_2}$.
Thus $\eta_{\mathbb U_2} g$ is the theory of $(\mathbb U_1\cup CG(\mathbb U_2), \alpha_2 g\times\{0\})$.
But now
note that by definition of how the functor $G$ acts on morphisms we have that $\alpha_2g\alpha_1^{-1}: O_{1i_1}\to O_{2i_2}$ is one of the functions in $G(g)$.
Thus, by how $C$ was defined to act on morphisms, $CG(g)$ is the theory of $(CG(\mathbb U_1)\cup CG(\mathbb U_2), \alpha_2g\alpha_1^{-1})$.
Hence $CG(g)\eta_{\mathbb U_1}$ is also the theory of $(\mathbb U_1\cup CG(\mathbb U_2), \alpha_2 g\times\{0\})$, as desired.

So we have a a natural isomorphism $\eta:\id_{\mathbf{IGI}_T} \to CG$.

Next we construct a natural isomorphism $\varepsilon:\id_{\mathbf {FCGpd}_T}\to GC$.

Let $\mathcal G = (O_{1i}, f_{1m})_{i \in I_1, m \in M_1}$
be a $0$-definable finitely faithful connected groupoid in $\mathbb U$.
So $C(\mathcal G)$ is an internal generalised imaginary with new sort $O_*=O_{i}\times\{0\}$ for some fixed $i\in I_1$.
We get a liason groupoid $GC(\mathcal G)=(O_{2i}, f_{2m})_{i \in I_2, m \in M_2}$ in $\mathbb U$, and we wish to define a natural isomorphism $\varepsilon_\mathcal G:\mathcal G\to GC(\mathcal G)$.

Recall that  $\mathcal G'$ is a definable connected groupoid in $C(\mathcal G)$ extending $\mathcal G$ by the one new object $O_*$.
On the other hand, $G'C(\mathcal G)$ is also a definable connected groupoid in $C(\mathcal G)$, this time extending $GC(\mathcal G)$ but also by the one new object $O_*$.
Note that
$$\Aut_{\mathcal G'}(O_*)=\Aut\big(C(\mathcal G)/\mathbb U\big)=\Aut_{G'C(\mathcal G)}(O_*)$$
as groups acting on $O_*$.
This suggests how we should define $\varepsilon_\mathcal G:\mathcal G\to GC(\mathcal G)$; given $i_1\in I_1$ and $i_2\in I_2$, the set of function from $O_{1i_1}$ to $O_{2i_2}$ in $\varepsilon_\mathcal G$ is
$$\Hom_{G'C(\mathcal G)}(O_*,O_{2i_2})\circ\Hom_{\mathcal G'}(O_{1i_1},O_*).$$
As both $\mathcal G'$ and $G'C(\mathcal G)$ are $0$-definable in $C(\mathcal G)$, this is clearly uniformly $0$-definable in $C(\mathcal G)$, and hence by stable embeddability in $\mathbb U$.
Let $N\to I_1\times I_2$ parametrise $\varepsilon_\mathcal G$.

To see that $\varepsilon_\mathcal G$ does not depend on the choice of $i$, suppose $\overline{C(\mathcal G)}$ is an isomorphic copy of $C(\mathcal G)$ over $\mathbb U$, which is exactly what a different choice of $i$ would produce.
Suppose $\sigma$ is the isomorphism.
We need to show that
$$\Hom_{G'C(\mathcal G)}(O_*,O_{2i_2})\circ\Hom_{\mathcal G'}(O_{1i_1},O_*)= \Hom_{G'\overline{C(\mathcal G)}}(\overline O_*,O_{2i_2})\circ\Hom_{\overline{\mathcal G'}}(O_{1i_1},\overline O_*).$$
But  $\sigma$ would take $\Hom_{G'C(\mathcal G)}(O_*,O_{2i_2})$ to $\Hom_{G'\overline{C(\mathcal G)}}(\overline O_*,O_{2i_2})$, and $\Hom_{\mathcal G'}(O_{1i_1},O_*)$ to $\Hom_{\overline{\mathcal G'}}(O_{1i_1},\overline O_*)$, so that it takes the left-hand-side of the desired equality to the right-hand-side.
But as $\sigma$ is the identity on $\mathbb U$, these sets must be equal.

Next we check condition~(A) of being a morphism.
Fix $p\in N(i_1,i_2)$ and $q\in N(j_1,j_2)$.
Write $(\varepsilon_\mathcal G)_p=\beta\alpha$ where $\alpha\in \Hom_{\mathcal G'}(O_{1i_1},O_*)$ and $\beta\in\Hom_{G'C(\mathcal G)}(O_*,O_{2i_2})$.
Similarly, write $(\varepsilon_\mathcal G)_q=\delta\gamma$ where $\gamma\in \Hom_{\mathcal G'}(O_{1j_1},O_*)$ and $\delta\in\Hom_{G'C(\mathcal G)}(O_*,O_{2j_2})$.
Then
\begin{eqnarray*}
(\varepsilon_\mathcal G)_q\Hom_{\mathcal G'}(O_{1i_1},O_{1j_1})
&=&
\delta\gamma\Hom_{\mathcal G'}(O_{1i_1},O_{1j_1})\\
&=&
\delta\Aut_{\mathcal G'}(O_*)\alpha\\
&=&
\delta\Aut_{G'C(\mathcal G)}(O_*)\alpha\\
&=&
\Hom_{G'C(\mathcal G)}(O_{2i_2},O_{2j_2})\beta\alpha\\
&=&
\Hom_{G'C(\mathcal G)}(O_{2i_2},O_{2j_2})(\varepsilon_\mathcal G)_p
\end{eqnarray*}
as desired.

That condition~(B) holds follows easily from the definitions and the fact that $\Hom_{G'C(\mathcal G)}$ is closed under composition.
So $\varepsilon_{\mathcal G}$ is a morphism.
It is an isomorphism by Lemma~\ref{iso=bij} since it is made up of bijections.

Finally, it remains to show that for any morphism $h:\mathcal G_1\to\mathcal G_2$,
\begin{center}
\begin{tikzcd}
\mathcal G_1\ar[r, "\varepsilon_{\mathcal G_1}"]\ar[d, "h"] & GC(\mathcal G_1) \ar[d, "GC(h)"]\\
\mathcal G_2\ar[r, "\varepsilon_{\mathcal G_2}"] & GC(\mathcal G_2)\\
\end{tikzcd}
\end{center}
commutes.
We leave this to the reader as it is notationaly rather tedious to write down and does not use any observation that we have not already made, but rather requires simply an unravelling of the definitions.
\end{proof}

\bigskip
\section {Uniformity}
\label{section-uniformity}

\noindent
We now investigate the extent to which these functors $G$ and $C$ preserve uniform definability in families.
As usual we fix a monster model $\mathbb U$ of our complete (multi-sorted) $L$-theory $T$, which we assume admits elimination of imaginaries.

First, let us be precise about what we mean by families of connected groupoids.

\begin{definition}
Given a $0$-definable set $A$ in $\mathbb U$, by a {\em $0$-definable family of connected groupoids over $A$}, or a {\em $0$-definable connected groupoid relative to $A$}, denoted by $\mathcal G\to A$, we mean the following:
a $0$-definable set $I$ equipped with a surjective $0$-definable function $I\to A$, a $0$-definable set $M$ equipped with a surjective $0$-definable function $\displaystyle M\to I\times_A I$, a $0$-definable set $O$ with a surjective $0$-definable $O \to I$, and a $0$-definable subset $\displaystyle f \subseteq M\times_{I\times_A I} (O \times_A O)$ such that for each $a\in A$, the fibres $I_a, M_a\to I_a\times I_a, O_a\to I_a$, and $f_a\subseteq M_a\times_{I_a\times I_a}(O_a\times O_a)$, form an $a$-definable connected groupoid $\mathcal G_a=\big((O_a)_i,(f_m)_i\big)_{i\in I_a,m\in M_a}$.
\end{definition}

Note that this is really nothing other than a $0$-definable groupoid in $\mathbb U$ that may not be connected.
Indeed, the total space $\mathcal G=(O_i,f_m)_{i\in I, m\in M}$ is a $0$-definable groupoid whose connected components are precisely the $\mathcal G_a$ for $a\in A$.
Conversely, if we start with a possibly disconnected $0$-definable groupoid $\mathcal G=(O_i,f_m)_{i\in I, m\in M}$ in $\mathbb U$, and we let (following~\cite{hrushovski}) $\operatorname{Iso}_{\mathcal G}$ denote the ($0$-definable) equivalence relation on $I$ where $(i,j)\in \operatorname{Iso}_{\mathcal G}$ if and only if $M(i,j)\neq\emptyset$, then $\mathcal G$ naturally obtains the structure of a definable connected groupoid relative to $A:=I/\operatorname{Iso}_{\mathcal G}$.
So asking about definable families of connected groupoids amounts to generalising from connected groupoids to possible disconnected groupoids.

On the other side, what should we mean by an internal generalised imaginary sort {\em relative to $A$}?
This is what Hrushovski~\cite{hrushovski} refers to as a $1$-analysable cover over $A$, and we will continue to use his terminology.

\begin{definition}
Given a $0$-definable set $A$ in $\mathbb U$,
a {\em $1$-analysable cover over $A$} is a cover $\mathbb U'=(\mathbb U,S,\dots)$ of $\mathbb U$ by a single new sort, in a language that is finitely generated over $L$, equipped with a $0$-definable surjective map $S\to A$ such that each fibre $S_a$ is internal to $\mathbb U$.
\end{definition}

As Hrushovski mentions, in the Remark following Lemma~3.1 of~\cite{hrushovski}, the $1$-analysable covers that arise have the additional property that there are no interesting definable relations between the distinct fibres of $S\to A$.
We formalise this as follows.

\begin{definition}
We will say that a cover $\mathbb U'=(\mathbb U,S,\dots)$ by a single new sort equipped with a $0$-definable surjective map $S\to A$, where $A$ is a $0$-definable set in~$\mathbb U$, has {\em independent fibres} if
$$\Aut(\mathbb U'/\mathbb U)=\prod_{a\in A}\Aut(S_a/\mathbb U)$$
in the sense that $\sigma\mapsto(\sigma|_{S_a}:a\in A)$ is an isomorphism.
Here,
$$\Aut(S_a/\mathbb U):=\{\sigma|_{S_a}:\sigma\in\Aut\big(\mathbb U'/\mathbb U\big)\}.$$
\end{definition}

The notation $\Aut(S_a/\mathbb U)$ could be misleading, it could be confused with the {\em a~priori} larger group of bijections $f:S_a\to S_a$ such that $f\cup\id_{\mathbb U}$ is a partial elementary map of $\mathbb U'$.
The following lemma implies that in the case of independent fibres these groups do actually agree.

\begin{lemma}
\label{sastable}
Suppose $A$ is $0$-definable in $\mathbb U$, and $\mathbb U'$ is a cover of $\mathbb U$ by a single new sort $S$ equipped with a $0$-definable surjective map $S\to A$ with independent fibres.
Then $(\mathbb U,S_a)$ is stably embedded in $\mathbb U'$ for each $a\in A$.
\end{lemma}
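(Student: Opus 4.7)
The plan is to verify characterisation~(iii) of Fact~\ref{seset} for $\mathbb U\cup S_a$, working inside the expansion $\mathbb U'_a$ so that $a$ is a named parameter. Concretely, I would take tuples $b,c$ of small length from $\mathbb U'$ with $\tp(b/\mathbb U\cup S_a)=\tp(c/\mathbb U\cup S_a)$ and produce an automorphism $\sigma\in\Aut(\mathbb U')$ that fixes $\mathbb U\cup S_a$ pointwise and sends $b$ to $c$.

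The first step is to weaken to types over $\mathbb U$: these still agree, so stable embeddedness of $\mathbb U$ in $\mathbb U'$ (which is built into the cover hypothesis), together with Fact~\ref{seset}(iii), yields some $\tau\in\Aut(\mathbb U'/\mathbb U)$ with $\tau(b)=c$. The second step uses the independent-fibres isomorphism to decompose $\tau$ as a tuple $(\tau_{a'})_{a'\in A}$ with $\tau_{a'}\in\Aut(S_{a'}/\mathbb U)$, and crucially to reassemble any modified tuple back into an element of $\Aut(\mathbb U'/\mathbb U)$. I would set $\sigma$ to be the automorphism corresponding to the tuple that equals $\tau_{a'}$ for $a'\neq a$ but is $\id_{S_a}$ on~$S_a$; by construction $\sigma$ fixes $\mathbb U\cup S_a$ pointwise.

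The final step is to check $\sigma(b)=c$ coordinate by coordinate. For $b_i\in\mathbb U$ this is immediate. For $b_i\in S_{a'}$ with $a'\neq a$, since $\tau\in\Aut(\mathbb U'/A)$ respects the $0$-definable map $S\to A$, the image $\tau(b_i)$ lies in $S_{a'}$, so $\sigma(b_i)=\tau_{a'}(b_i)=\tau(b_i)=c_i$. The case $b_i\in S_a$ is the key one: the formula $x=b_i$ belongs to the type over $\mathbb U\cup S_a$, forcing $c_i=b_i$, which matches $\sigma(b_i)=b_i$. The main conceptual step---and the essential use of the independent-fibres hypothesis---is that excising the $S_a$-component of $\tau$ still produces a bona fide automorphism of $\mathbb U'$; the verification that this excision does not disturb $b\mapsto c$ works precisely because the sort on which we have tampered is exactly the sort where $b$ and $c$ were already forced to agree.
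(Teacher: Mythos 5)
Your proof is correct, and it rests on the same two pillars as the paper's own argument --- stable embeddedness of $\mathbb U$ in $\mathbb U'$ to produce an automorphism over $\mathbb U$, followed by a fibre-wise splice made possible by the surjectivity of $\sigma\mapsto(\sigma|_{S_{a'}})_{a'\in A}$ --- but you run it through a different characterisation of stable embeddedness, and your splice is complementary to the paper's. The paper verifies the automorphism-extension criterion (in the spirit of Fact~\ref{sereduct}(ii)): it takes an automorphism $\rho$ of the structure induced on $(\mathbb U,S_a)$, reduces to the case $\rho|_{\mathbb U}=\id_{\mathbb U}$, realises the action of $\rho$ on a finite tuple $b$ from $S_a$ by some $\sigma\in\Aut(\mathbb U'/\mathbb U)$, and then splices to obtain $\tau$ agreeing with $\sigma$ on $S_a$ and equal to the identity on every other fibre, concluding that $\rho\cup\id_{\mathbb U'\setminus S_a}$ is elementary. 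You verify instead the homogeneity criterion of Fact~\ref{seset}(iii) over $\mathbb U\cup S_a$: you take $\tau\in\Aut(\mathbb U'/\mathbb U)$ with $\tau(b)=c$ and splice the other way, keeping every component of $\tau$ except the one at $a$, which you replace by $\id_{S_a}$; your closing observation --- that excising the $S_a$-component is harmless exactly because the type equality over $\mathbb U\cup S_a$ already forces $b$ and $c$ to agree there --- is precisely the right justification, and your case distinction (coordinates in $\mathbb U$, in $S_{a'}$ for $a'\neq a$ using that $\tau$ preserves the $0$-definable map $S\to A$, and in $S_a$) is exhaustive since $\mathbb U'=\mathbb U\cup S$ and $S$ is partitioned by the fibres. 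The two routes buy slightly different things: yours outputs homogeneity over $\mathbb U\cup S_a$ directly, while the paper's outputs the extension statement that it actually invokes later, namely that every automorphism of the induced structure fixing $\mathbb U$ is the restriction of an element of $\Aut(\mathbb U'/\mathbb U)$ (this is how Lemma~\ref{sastable} is used to get $\Aut(M/\mathbb U)=\Aut(S_a/\mathbb U)$ in the proof of Theorem~\ref{relative-groupoid-construction}); since $\mathbb U'$ is saturated, the two criteria are of course equivalent, so nothing is lost either way.
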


\begin{proof}
It suffices to show that every automorphism of the structure induced on $(\mathbb U, S_a)$ by the $a$-definable sets in $\mathbb U'$ extends to an automorphism of $\mathbb U'$.
Let $\rho$ be such an automorphism of the induced structure on $(\mathbb U,S_a)$.
Since $\mathbb U$ is stably embedded in $\mathbb U'$ -- this is part of being a cover -- $\rho|_{\mathbb U}$ does extend to $\Aut(\mathbb U')$, and hence, by precomposing with the inverse of this extension, we may assume that $\rho|_{\mathbb U}=\id_{\mathbb U}$.
We will show that $\rho$ extends by identity to an automorphism of $\mathbb U'$.
That is, we will show $\tp(bc)=\tp\big(\rho(b)c\big)$ for any finite tuple $b$ from $S_a$ and $c$ from $\mathbb U'\setminus S_a$.
We know that $\tp(b/\mathbb U)=\tp\big(\rho(b)/\mathbb U\big)$.
As $\mathbb U$ is stably embedded in $\mathbb U'$ there exists $\sigma\in\Aut(\mathbb U'/\mathbb U)$ with $\sigma(b)=\rho(b)$.
Now $\sigma|_{S_a}\in\Aut(S_a/\mathbb U)$, and so by independence of fibres there is $\tau\in\Aut(\mathbb U'/\mathbb U)$ that agrees with $\sigma$ on $S_a$ and is the identity everywhere else.
Hence, for any finite tuple $c$ from $\mathbb U'\setminus S_a$, we get
$$\tp(bc)=\tp\big(\tau(bc)\big)=\tp\big(\sigma(b)c\big)=\tp\big(\rho(b)c)$$
as desired.
\end{proof}

We are now ready to show that the liason groupoid functor $G$ relativises -- that the construction of Theorem~\ref{groupoid-construction} is uniformly definable in families -- under the assumption that the fibres are independent.

\begin{theorem}
\label{relative-groupoid-construction}
Suppose $A$ is $0$-definable set in $\mathbb U$, and $\mathbb U' = (\mathbb U, S,\dots)$ is a $1$-analysable cover over $A$ with independent fibres.
Then there are $0$-definable families of finitely faithful connected groupoids, $\mathcal G\to A$ in $\mathbb U$ and $\mathcal G'\to A$ in $(\mathbb U')^{\eq}$, such that for each $a\in A$
\begin{itemize}
\item
$\mathcal G_a'$ is a one-object extension of $\mathcal G_a$ with that object being $S_a$, and
\item
$\Aut_{\mathcal G_a'}(S_a)$ together with its action on $S_a$ is isomorphic to
$\Aut(S_a/\mathbb U)$.
\end{itemize}
We denote $\mathcal G\to A$ by $G_A(\mathbb U')$.
\end{theorem}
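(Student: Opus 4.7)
The plan is to run the proof of Theorem~\ref{groupoid-construction} uniformly over $a\in A$. Lemma~\ref{sastable} and independence of fibres supply the relative substitutes for stable embeddedness of $\mathbb U$ in $\mathbb U'$: the former gives that each $(\mathbb U,S_a)$ is stably embedded, and the latter gives that a bijection $\sigma:S_a\to S_a$ lifts to $\Aut(\mathbb U'/\mathbb U)$ if and only if $\sigma\cup\id_{\mathbb U}$ is $L'$-partial elementary. The remaining issue is to recast, by compactness, the fibrewise parameters and formulas of the absolute proof as $0$-definable data over $A$.

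First I would secure uniform witnesses of internality: by internality of each $S_a$ and a compactness argument over $A$, there is a single $L'$-formula $\psi(x,y,u,v)$ and a $0$-definable set $P\subseteq\mathbb U\times\mathbb U'$ equipped with a $0$-definable surjection $P\to A$, such that for every $(b,c)\in P$ lying above $a$, the formula $\psi(x,y,b,c)$ defines the graph of a bijection $f_{b,c}:S_a\to O_{b,a}$ with $O_{b,a}$ a $\mathbb U$-definable set. Separately, finite generation of $L'$ over $L$ yields an $L'$-formula $\theta(u,v,u',v')$ such that $\models\theta(b,c,b',c')$ iff $(b,c),(b',c')\in P$ lie above a common $a\in A$ and $f^{-1}_{b',c'}\circ f_{b,c}:S_a\to S_a$ extends by the identity on $\mathbb U\cup(S\setminus S_a)$ to an element of $\Aut(\mathbb U'/\mathbb U)$; here finite generation is used exactly as in the first paragraph of the proof of Theorem~\ref{groupoid-construction}, while independence of fibres ensures that the extension by the identity is possible whenever the bijection on $S_a$ is partial $L'$-elementary over $\mathbb U$.

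The main step, which I expect to require the most care, is to find a single $0$-definable formula $\phi(v,\bar g,w)$ (with $w$ ranging over $A$ and $\bar g$ from $\mathbb U$) playing uniformly in $a$ the role of the compactness-derived formula in the body of the absolute proof. For each fixed $a$, after choosing $(b_a,c_a)\in P_a$, stable embeddedness of $\mathbb U$ in $\mathbb U'$ produces a tuple $\bar g_a\in\mathbb U$ (containing $b_a$) and a formula $\phi_a(v,\bar g_a,a)\in\tp(c_a/\mathbb U)$ with
\[
\phi_a(v,\bar g_a,a)\wedge\phi_a(v',\bar g_a,a)\vdash\theta(\bar g_a,v,\bar g_a,v',a).
\]
The set of $a\in A$ on which a given candidate formula works (for some choice of $\bar g$) is $0$-definable, and by the fibrewise argument these sets cover $A$; so by a further compactness argument finitely many candidates suffice and can be combined into a single $0$-definable $\phi$ together with a $0$-definable set of good parameter tuples surjecting onto $A$. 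An entirely analogous compactness argument produces the uniform replacement for the formula $\psi(z)\in\tp(b)$ that appears in the next step of the absolute proof.

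With uniform $\psi$, $\theta$, $\phi$, and the analogue of $\psi(z)$ in hand, I would define $\mathcal G\to A$ fibrewise exactly as in Theorem~\ref{groupoid-construction}: the objects of $\mathcal G_a$ are the $O_{b',a}$ for $b'\in\mathbb U$ satisfying the uniform analogue of $\psi(z)$ over $a$, the morphisms $S_a\to O_{b',a}$ of $\mathcal G_a'$ are the $f_{b',c'}$ for $(b',c')\in P_a$ selected by $\phi$, and morphisms between two $\mathbb U$-definable objects of $\mathcal G_a$ are assembled by composition. Stable embeddedness of $\mathbb U$ in $\mathbb U'$ ensures that the resulting family of morphism sets descends to a $0$-definable family in $\mathbb U$, giving $\mathcal G\to A$; extending by $S_a$ on each fibre yields $\mathcal G'\to A$ in $(\mathbb U')^{\eq}$. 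Fibrewise connectedness, the liason-group identity $\Aut_{\mathcal G_a'}(S_a)\cong\Aut(S_a/\mathbb U)$, and finite faithfulness of each $\mathcal G_a$ are all inherited from the fibrewise application of Theorem~\ref{groupoid-construction} and the observation immediately following its proof.
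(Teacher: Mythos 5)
Your proposal is correct, but it is organised quite differently from the paper's proof, so it is worth comparing the two. The paper proceeds in two modular steps. First, for each fixed $a\in A$ it forms the structure $M$ induced on $(\mathbb U,S_a)$ by the $a$-definable sets of $\mathbb U'$ and shows that $M$ is itself an internal generalised imaginary sort of $\mathbb U$; the crux is that the language of $M$ is finitely generated over $L$, which is proved by the definability criterion implicit in the proof of Proposition~\ref{finite-language}, together with Lemma~\ref{sastable} and independence of fibres. Theorem~\ref{groupoid-construction} is then applied to $M$ as a black box, yielding $a$-definable $\mathcal G_a$ and $\mathcal G_a'$. Second, the paper spreads this over $A$ by a single compactness argument in the parameter: the $a$-formulas defining $\mathcal G_a,\mathcal G_a'$ continue to define groupoids with the required properties on a $0$-definable neighbourhood of $a$, the only delicate point being that the condition $\Aut_{\mathcal G_b'}(S_b)=\Aut(S_b/\mathbb U)$ is $0$-definably imposable on $b$ (one inclusion holds automatically; the other is expressed, via finite generation of $L'$ over $L$, as ``every element of $\Aut_{\mathcal G_b'}(S_b)$ extended by the identity is an automorphism of $\mathbb U'$'', and holds at $a$ by independence of fibres). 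You instead inline the proof of Theorem~\ref{groupoid-construction}, making each of its ingredients ($\psi$, $\theta$, $\phi$, and the analogue of $\psi(z)$) uniform in $a$ by separate compactness/covering arguments, with independence of fibres absorbed directly into the meaning of your $\theta$ (extension by the identity off $S_a$). What your route buys: you never need to form the induced structure on $(\mathbb U,S_a)$ or prove the fibrewise finite-generation claim, and the liason identity $\Aut_{\mathcal G_a'}(S_a)=\Aut(S_a/\mathbb U)$ holds at every good parameter as an immediate consequence of the construction, since $\theta$ literally asserts lifting to $\Aut(\mathbb U'/\mathbb U)$ via the identity extension; so no separate definability-of-the-liason-condition step is needed. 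What the paper's route buys: the fibrewise statement (that each $(\mathbb U,S_a)$ with its induced structure is an internal generalised imaginary sort) is isolated as a clean, reusable fact, and the uniformity is achieved in one compactness step rather than by formula-by-formula bookkeeping. One small caveat on your last paragraph: the fibrewise properties are inherited not from a literal application of Theorem~\ref{groupoid-construction} (your construction never passes to a one-sorted cover), but from rerunning its proof and the observation following it with $\Aut(S_a/\mathbb U)$ in place of $\Aut(\mathbb U'/\mathbb U)$; in particular, the finite-faithfulness computation requires choosing the lift of an element of $\Aut(S_a/\mathbb U)$ that is the identity off $S_a$, which independence of fibres provides and your setup supports.
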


\begin{proof}
The first step will be to produce for each $a\in A$ an $a$-definable finitely faithful connected groupoid $\mathcal G_a$ in $\mathbb U$ with a one-object extension $\mathcal G'_a$ in $(\mathbb U')^{\eq}$ such that $\Aut_{\mathcal G'_a}(S_a) = \Aut(S_a/\mathbb U)$.
We will then show that these vary uniformly with the parameter $a$.

Fix $a \in A$, and let $M$ be the structure induced on $(\mathbb U,S_a)$ by the $a$-definable sets in~$\mathbb U'$.
Then $M$ is an internal cover of $\mathbb U$.
We claim that, because of the independnece of fibres, the language of $M$ is finitely generated over that of~$\mathbb U$.
Indeed, the proof of Proposition~\ref{finite-language} actually shows that $M$ has a finitely generated language if for any definable family of (graphs of) definable bijections from $S_a$ to itself in $M$, say $F\subseteq B\times S_a^2$, the set $\{b\in B:F_b\cup\id_{\mathbb U}\in\Aut(M/\mathbb U)\}$ is definable.
By stable embeddeness (Lemma~\ref{sastable}), $\Aut(M/\mathbb U)=\Aut(S_a/\mathbb U)$.
By independence of fibres, $F_b\cup\id_{\mathbb U}\in\Aut(S_a/\mathbb U)$ if and only if $\displaystyle F_b\cup \id_{\mathbb U'\setminus S_a}\in\Aut(\mathbb U')$.
And the latter is a definable (in $\mathbb U'$) property of the parameter $b$ as the language of $\mathbb U'$ is finitely generated over that of $\mathbb U$.
Hence, by stable embeddeness of $(\mathbb U, S_a)$ in $\mathbb U'$ again, it is a definable property of $b$ in $M$.
So $M$ is an internal generalised imaginary sort of~$\mathbb U$, and we can apply Theorem~\ref{groupoid-construction} to $M$.
We obtain an $a$-definable connected liason groupoid $\mathcal G_a$ in $\mathbb U$ and its one-object connected extension $\mathcal G'_a$ in $M$ with new object~$S_a$.
So $\mathcal G_a'$ is $a$-definable in $\mathbb U'$.
Moreover,  $\Aut_{\mathcal G'_a}(S_a) = \Aut(M/\mathbb U)=\Aut(S_a/\mathbb U)$.

Note that despite the notation, we do not yet have uniformity as $a$ varies.
Our remaining goal is to make these groupoids part of uniformly $0$-definable families of connected groupoids $\mathcal G\to A$ in $\mathbb U$ and $\mathcal G'\to A$ in $(\mathbb U')^{\eq}$.

Fix $a\in A$ and formulas (involving the parameter $a$) that define all the data that make up $\mathcal G_a$ and $\mathcal G_a'$.
By compactness it suffices to produce a $0$-definable set $a\in U(a)\subseteq A$ such that for all $b\in U(a)$, when one replaces $a$ by $b$ in these formulas one still gets connected groupoids $\mathcal G_b$ and $\mathcal G_b'$, where $\mathcal G'_b$ is a one-object extension of $\mathcal G_b$ with new object $S_b$, and such that $\Aut_{\mathcal G'_b}(S_b)=\Aut(S_b/\mathbb U)$.
The only part of this that is not clear is that the condition $\Aut_{\mathcal G'_b}(S_b)=\Aut(S_b/\mathbb U)$ can be $0$-definably imposed on the parameter $b$.

That $\Aut(S_b/\mathbb U)\subseteq \Aut_{\mathcal G'_b}(S_b)$ is no additional condition at all on $b$, it holds for all $b$ such that $\mathcal G_b'$ is one-object extension of $\mathcal G_b$ with new object $S_b$.
Indeed, let $\sigma\in \Aut(\mathbb U'/\mathbb U)$.
Choose $f_m\in\Hom_{\mathcal G_b'}(S_b, O_i)$ for some object $O_i$ from $\mathcal G_b$.
Then for all $x\in S_b$ we have
\begin{eqnarray*}
f_m(x)
&=&
\sigma(f_m(x))\ \ \ \text{ as $f_m(x)\in O_i\subseteq \mathbb U$}\\
&=&
f_{\sigma(m)}(\sigma(x))
\end{eqnarray*}
and $f_{\sigma(m)}\in\Hom_{\mathcal G_b'}(S_b, O_i)$ as $\sigma(b)=b$.
Hence $\sigma|_{S_b}=f_{\sigma(m)}^{-1}f_m\in\Aut_{\mathcal G_b'}(S_b)$.

So it remains to find a formula $\theta(z)$ satisfied by $a$ and implying that every element of $\Aut_{\mathcal G_z'}(S_z)$ extends to an element of $\Aut(\mathbb U'/\mathbb U)$.
By finite generatedness of the language over $L$, there is $\theta(z)$ expressing that every element of $\Aut_{\mathcal G_z'}(S_z)$ when extended by the identity is an element of $\Aut(\mathbb U'/\mathbb U)$.
That this holds of $a$ is by independence of the fibres plus the fact that $\Aut_{\mathcal G'_a}(S_a) = \Aut(S_a/\mathbb U)$.
\end{proof}

Next we relativise the functor $C$.

\begin{theorem}
\label{relative-cover-construction}
Suppose $\mathcal G\to A$ is a finitely faithful $0$-definable connected groupoid relative to $A$ in $\mathbb U$.
There exists a $1$-analysable cover over $A$, $C_A(\mathcal G)=(\mathbb U,S,\dots)$,  with independent fibres, and a $0$-definable connected groupoid relative to $A$, $\mathcal G'\to A$, in~$C_A(\mathcal G)$, such that for each $a\in A$,
\begin{itemize}
\item
$\mathcal G_a'$ is a one-object extension of $\mathcal G_a$, the new object being $S_a$, and
\item
$\Aut_{\mathcal G_a'}(S_a)$ together with its action on $S_a$ is isomorphic to 
$\Aut(S_a/\mathbb U)$.
\end{itemize}
\end{theorem}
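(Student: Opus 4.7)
Proof plan. The argument parallels the constructions of Theorems~\ref{cover-construction} and~\ref{cover-construction-refined} but carried out uniformly in the parameter $a \in A$. The plan is to assemble the cover fibrewise, then verify uniform $0$-definability, independent fibres, and finite-generatedness of the language.

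First, for each $a \in A$, apply Theorem~\ref{cover-construction-refined} to the $a$-definable finitely faithful connected groupoid $\mathcal G_a$. This produces an internal generalised imaginary sort $(\mathbb U, S_a, \dots)$ and an $a$-definable one-object extension $\mathcal G_a'$ of $\mathcal G_a$ with new object $S_a$, satisfying $\Aut_{\mathcal G_a'}(S_a) \cong \Aut(S_a/\mathbb U)$. These fibrewise pieces are the candidates for the fibres of the sought-after $S \to A$ and $\mathcal G' \to A$.

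Next, to promote these fibrewise constructions to a $0$-definable family, one would work through the recipe on page~\pageref{constructiondetails} with $a$ varying. The recipe requires a choice of distinguished object $i(a) \in I_a$; although such a choice is not $0$-definable as a function of $a$, the resulting cover is canonically determined up to isomorphism over~$\mathbb U$ by Theorem~\ref{cover-construction}. I would therefore parameterise the construction over $I$ (which maps surjectively $0$-definably to $A$), obtaining an $I$-definable family whose fibre at $i \in I_a$ carries the new sort $O_i \times \{0\}$ together with its auxiliary morphism set $M_*$ and the ternary relation $R$. For $i, j \in I_a$, any $f_m \in \Hom_{\mathcal G_a}(O_{a,i}, O_{a,j})$ induces a canonical isomorphism of the two constructions over $\mathbb U$. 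Since this identification is itself $0$-definable (encoded in $f$), the family descends $0$-definably from $I$ to $A$, yielding a single new sort $S \to A$ and the extended groupoid $\mathcal G' \to A$ inside the resulting structure $C_A(\mathcal G) = (\mathbb U, S, \dots)$.

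Finally, I would verify the three remaining properties. For \emph{independence of fibres}, observe that each instance of the ternary relation $R$ defining the cover lives above a single $a \in A$, so no new relations cut across distinct fibres; hence any family $(\sigma_a)_{a \in A}$ with $\sigma_a \in \Aut(S_a/\mathbb U)$ assembles into an element of $\Aut(C_A(\mathcal G)/\mathbb U)$, yielding the product decomposition. For \emph{finite generation} of the language over $L$, use that the fibrewise languages are finitely generated by Theorem~\ref{cover-construction-refined} and that the added symbols ($\pi_S$, $R$, and the predicates encoding the new morphisms) are finitely many and uniform in $a$. Internality of each $S_a$ is immediate from the fibrewise step, and the liason group identity $\Aut_{\mathcal G_a'}(S_a) \cong \Aut(S_a/\mathbb U)$ persists after assembly by independence of fibres. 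The main obstacle is the uniformisation step: the distinguished-object choice $i(a)$ is not $0$-definable in general, and the route of parameterising over $I$ and descending along the groupoid-morphism isomorphisms—while then checking saturation and stable-embeddedness of the assembled cover—is where the substantive technical work lies.
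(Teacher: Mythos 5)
The central gap is in your uniformisation step. For $i,j\in I_a$ there is no canonical isomorphism between the construction based at $i$ and the one based at $j$: each $f_m\in\Hom_{\mathcal G_a}(O_i,O_j)$ induces a different one, and two such identifications differ by an element of $\Aut_{\mathcal G_a}(O_i)$. So your $I$-indexed family of constructions is a diagram over the connected groupoid $\mathcal G_a$, and ``descending'' it to $A$ --- i.e.\ taking its colimit along all these identifications --- yields the quotient $O_i/\Aut_{\mathcal G_a}(O_i)$, which by elimination of imaginaries is a definable set in $\mathbb U$. The resulting $S_a$ would then have $\Aut(S_a/\mathbb U)$ trivial, so the second bullet of the theorem fails whenever $\Aut_{\mathcal G_a}(O_i)\neq 1$; and selecting a single identification for each pair $(i,j)$ coherently is exactly the non-definable choice problem you were trying to avoid. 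The resolution, used in the paper following~\cite[Lemma~3.1]{hrushovski}, is that no definable choice is needed at all: one fixes an arbitrary, not necessarily definable, section $\nu:A\to I$ and sets $S_a:=O_{\nu(a)}\times\{a\}$. The section is only used to build one concrete model; since $S$ and $M_*$ are genuinely new sorts, what must be (and is) $0$-definable is the structure placed on them --- the maps to $A$ and the ternary relation $R$ --- and the theory of the resulting cover is independent of $\nu$. Indeed, any construction that makes $S_a$ definably identified with a set in $\mathbb U$ is doomed, precisely because the theorem requires $\Aut(S_a/\mathbb U)\cong\Aut_{\mathcal G_a'}(S_a)$.

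Two further verifications are treated as nearly automatic when they are not. First, your independence-of-fibres argument inspects only the basic relation $R$; but $C_A(\mathcal G)$ is the \emph{full} structure induced on $(\mathbb U,S)$ by the two-sorted cover $\mathbb U'$ (this is where finite faithfulness enters: as in Proposition~\ref{whenff} it gives $M_*\subseteq\dcl(\mathbb U,S)$ and hence stable embeddedness of $(\mathbb U,S)$ in $\mathbb U'$), and induced relations could a priori relate distinct fibres. The paper proves surjectivity of $\sigma\mapsto(\sigma|_{S_a})_{a\in A}$ by explicitly extending $\id_{\mathbb U}\cup\bigcup_{a\in A}\sigma_a$ to the sort $M_*$, morphism by morphism, and checking $R$ is preserved. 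Second, finite generation of the language is not ``fibrewise finite generation plus finitely many uniform symbols'': one needs a single finite sublanguage generating the entire induced structure uniformly in $a$. The paper obtains it by a relativisation of Proposition~\ref{finite-language}, using compactness to get a uniform $0$-definable family $B\to A$ of internality witnesses $F_b:S_a\to X$, the $a$-definability of $\Aut(S_a/\mathbb U)=\Aut_{\mathcal G_a'}(S_a)$, Lemma~\ref{sastable}, and --- crucially --- independence of fibres, which is what allows one to pass from ``$\sigma|_{S_a}\in\Aut(S_a/\mathbb U)$ for each $a$'' to ``$\sigma$ is $L'$-elementary''. Your outline lists finite generation and independence of fibres as parallel, separate items, but the former depends on the latter.
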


\begin{proof}
As in the proof of Theorem~\ref{cover-construction-refined} this goes by first following Hrushovski~\cite{hrushovski} in constructing $\mathcal G'\to A$ in a cover $\mathbb U'$ that involves {\em two} new sorts, and then using the finite faithfulness assumption to get rid of one of them.
In addition one has to establish the finite-generatedness of the language (part of being a $1$-analysable cover) and the independence of the fibres.

The first step is a straightforward relativisation of the construction given on page~\pageref{constructiondetails}.
This is actually done in~\cite[Lemma~3.1]{hrushovski} where Hrushovski does not assume the definable groupoid is connected.
Instead of fixing a single $i\in I$, we fix a section $\nu:A\to I$ and let $S_a:=O_{\nu(a)}\times\{a\}$.
We can then construct $\mathcal G'\to A$ so that for each $a\in A$ and $i\in I_a$, $\Hom_{\mathcal G'}(S_a, O_i)$ is a copy of $\Hom_{\mathcal G}(O_{\nu(a)}, O_i)$, $\Hom_{\mathcal G'}(O_i,S_a)$ is a copy of $\Hom_{\mathcal G}(O_i,O_{\nu(a)})$, and $\Aut_{\mathcal G'}(S_a)$ is a copy of $\Aut_{\mathcal G}(O_{\nu(a)})$.
Hrushovski proves that this groupoid lives $0$-definably in a cover $\mathbb U'$ of $\mathbb U$ with  new sorts $S=\bigcup_{a\in A}S_a$ and $M_*$, the latter equipped with a map to $A$ such that $(M_*)_a$ encodes the morphisms that involve $S_a$ as either domain or codomain or both.
That
$\Aut_{\mathcal G_a'}(S_a)=\{\sigma|_{S_a}:\sigma\in\Aut(\mathbb U')\text{ and }\sigma|_{\mathbb U}=\id_{\mathbb U}\}$, is part of~\cite[Lemma~3.1]{hrushovski}.

For the second step, since $\mathcal G$ is finitely faithful, the proof of Proposition~\ref{whenff} shows that $M_*\subseteq\dcl(\mathbb U, S)$, and so $(\mathbb U,S)$ is stably embedded in $\mathbb U'$. We define $C_A(\mathcal G)$ to be the structure induced on $(\mathbb U,S)$ by $\mathbb U'$. 
Hence $\mathcal G'\to A$ is a $0$-definable connected groupoid relative to $A$ in $C_A(\mathcal G)$ and $\Aut_{\mathcal G_a'}(S_a)=\Aut(S_a/\mathbb U)$.

Next we establish the independence of the fibres.
The point is that the new structure in $C_A(\mathcal G)$ is just $\mathcal G'$ which has no morphisms between different fibres $S_a$ as the $\mathcal G'_a$ are its connected components.
But we give some details.
Since $S$ is the only new sort of $C_A(\mathcal G)$, and the fibres $S_a$ are $\mathbb U$-definable, the map $\Aut\big(C_A(\mathcal G)/\mathbb U\big)\to\prod_{a\in A}\Aut(S_a/\mathbb U)$ given by $\sigma\mapsto(\sigma|_{S_a}:a\in A)$ is an injective homomorphism.
To prove surjectivity we start with $\sigma_a\in \Aut(S_a/\mathbb U)$ for each $a\in A$, and define $\sigma:=\id_{\mathbb U}\cup \bigcup_{a\in A}\sigma_a$.
We need to show that $\sigma$ is an automorphism of $C_A(\mathcal G)$.
It suffices to show that $\sigma$ extends to an automorphism of $\mathbb U'=(\mathbb U, M_*, S,\dots)$.
Given $m\in M_*$ we define $\sigma(m)$ as follows: if $f_m\in\Hom_{\mathcal G'_a}(S_a, O_i)$ for some $i\in I_a$ then we define $\sigma(m)\in M_*$ to be such that $f_{\sigma(m)}=f_m\sigma_a^{-1}$.
Note that this makes sense as $\sigma_a\in \Aut(S_a/\mathbb U)=\Aut_{\mathcal G_a'}(S_a)$ and hence $f_m\sigma_a^{-1}\in\Hom_{\mathcal G'_a}(S_a, O_i)$.
If instead $f_m\in\Hom_{\mathcal G'_a}(O_i,S_a)$ then $\sigma(m)$ is such that $f_{\sigma(m)}=\sigma_af_m$.
Finally, if $f_m\in\Aut_{\mathcal G'_a}(S_a)$ then define $\sigma(m)$ so that $f_{\sigma(m)}=\sigma_af_m\sigma_a^{-1}$.
This extends $\sigma$ to a permutation of all the sorts of $\mathbb U'$.
The only new basic relation in $\mathbb U'$, besides $S\to A$ and $M_*\to A$ which $\sigma$ clearly preserves, is the ternary relation that exhibits elements of $M_*$ as encoding definable bijections involving the fibres of $S\to A$.
It is easy to check that we have defined $\sigma$ so that this relation is preserved.

Finally, we need to check that the language of $C_A(\mathcal G)$, say $L'$, is finitely generated over $L$.
This amounts to a relativisation of the proof Proposition~\ref{finite-language}, which we can effect using the independence of the fibres.
Since the fibres of $S\to A$ are internal to $\mathbb U$ which eliminates imaginaries, by compactness there exists a $0$-definable set $B$ in $C_A(\mathcal G)$ with a surjective $0$-definable map $B\to A$, and a $0$-definable map $F:S\times_AB\to X$ over $B$, for some sort $X$ of~$\mathbb U$, that is injective on the fibres over $B$.
So for $b\in B_a$, $F_b:S_a\to X$ is a $b$-definable embedding of $S_a$ into~$\mathbb U$.
There exists an $L'$-formula $\theta(v,w)$ such that $\models\theta(b,c)$ if and only if $b,c\in B_a$ for some $a\in A$, $F_b(S_a)=F_c(S_a)$ as subsets of $X$, and $F_c^{-1}\circ F_b:S_a\to S_a$ is in $\Aut(S_a/\mathbb U)$.
This is because $\Aut(S_a/\mathbb U)$ and its action on $S_a$ is $a$-definable; it is $\Aut_{\mathcal G_a'}(S_a)$.
By stable embeddedess (Lemma~\ref{sastable}) $\Aut(S_a/\mathbb U)$ is precisely the set of bijections $f:S_a\to S_a$ such that $f\cup\id_{\mathbb U}$ is a partial $L'$-elementary map.
It follows by compactness that there must be a finite sublanguage $\Sigma\subseteq L'$ such that whenever $b,c\in B_a$ and $F_b(S_a)=F_c(S_a)$, then $\displaystyle F_c^{-1}\circ F_b\cup\id_{\mathbb U}$ is a partial $L'$-elementary map if and only if it is a partial $\Sigma$-elementary map.
Extending $\Sigma$ we may also assume that $F$ and $B$ are defined by $\Sigma$-formulas.
Set $L_\circ':=L\cup\Sigma$.
We show that $L_\circ'$ witnesses the finite-generatedness of $L'$ over~$L$.
As in Proposition~\ref{finite-language}, this reduces to showing that $\Aut(\mathbb U'_\circ/\mathbb U)=\Aut(\mathbb U'/\mathbb U)$, where $\mathbb U'_\circ$ is the reduct of $\mathbb U'$ to~$L'_\circ$.
Suppose $\sigma\in \Aut(\mathbb U'_\circ/\mathbb U)$.
By the independence of fibres, it suffices to show that $\sigma|_{S_a}\in\Aut(S_a/\mathbb U)$ for each $a\in A$.
But fixing any $b\in B_a$, as we have seen a few times, we get $\sigma|_{S_a}=F_{\sigma(b)}^{-1}\circ F_b$.
In particular, $F_{\sigma(b)}^{-1}\circ F_b\cup\id_{\mathbb U}$ is a partial $L_{\circ}'$-elementary map, and hence a partial $L'$-elementary map by choice of $\Sigma\subseteq L'_\circ$.
So, $\sigma|_{S_a}\in\Aut(S_a/\mathbb U)$ as desired.
\end{proof}

When $A$ is a singleton $C_A(\mathcal G)$ coincides with the internal cove $C(\mathcal G)$ of~$\S$\ref{Cfunctor}.
Even when $A$ is infinite, $C_A(\mathcal G)$ may still be an internal cover.
For example, if $\mathcal G$ is such that between any two objects there is at most one morphism, then~$\mathcal G$ is essentially the $0$-definable equivalence relation $\operatorname{Iso}_{\mathcal G}$, and $C_A(\mathcal G)$ is part of $\mathbb U^{\operatorname{eq}}$.
Internality of $C_A(\mathcal G)$ corresponds to this happening for all but finitely many $a\in A$.

\begin{proposition}
The $1$-analysable cover $C_A(\mathcal G)$ is an internal cover if and only if for all but finitely many $a\in A$ the automorphism group of any (equivalently some) object of $\mathcal G_a$ is trivial.
\end{proposition}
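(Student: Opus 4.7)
The plan is to exploit the decomposition $\Aut(C_A(\mathcal G)/\mathbb U)=\prod_{a\in A}\Aut(S_a/\mathbb U)$ coming from independence of fibres, together with the identification $\Aut(S_a/\mathbb U)\cong\Aut_{\mathcal G'_a}(S_a)\cong\Aut_{\mathcal G_a}(O)$ for any object $O$ of $\mathcal G_a$ (the last isomorphism by connectedness of $\mathcal G_a$). Both directions then reduce to counting how many fibres can have nontrivial binding group.

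For the ``only if'' direction, I would start with $S$ internal to $\mathbb U$. Internality provides a small tuple $c$ from $C_A(\mathcal G)$ over which $S$ is in definable bijection with a $\mathbb U$-definable set, so $S\subseteq\dcl(\mathbb U,c)$; since $S$ is the only new sort, this forces $\Aut(C_A(\mathcal G)/\mathbb U,c)=1$. Only finitely many components of $c$ lie in $S$, so I let $a_1,\dots,a_n\in A$ be the fibres containing them. For any $a\notin\{a_1,\dots,a_n\}$ and any $\sigma_a\in\Aut(S_a/\mathbb U)$, independence of fibres realises $\sigma_a$ as the restriction of some $\sigma\in\Aut(C_A(\mathcal G)/\mathbb U)$ that is the identity on every other $S_b$. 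Such a $\sigma$ fixes $c$, hence $\sigma=1$ and $\sigma_a=1$. So $\Aut_{\mathcal G_a}(O)\cong\Aut(S_a/\mathbb U)=1$ for all but finitely many $a$.

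For the converse, let $\{a_1,\dots,a_n\}\subseteq A$ enumerate the finitely many parameters with $\Aut_{\mathcal G_a}(O)\ne1$. For $a$ outside this set, $\Hom_{\mathcal G'}(S_a,O_{\nu(a)})$ is a torsor under $\Aut_{\mathcal G_a}(O_{\nu(a)})=1$, hence a singleton; its unique element $\alpha_a:S_a\to O_{\nu(a)}$ is an $a$-definable bijection onto a subset of $\mathbb U$. Since $\mathcal G'\to A$ is $0$-definable in $C_A(\mathcal G)$, the family $(\alpha_a)$ assembles into an $\{a_1,\dots,a_n\}$-definable bijection from $\pi^{-1}(A\setminus\{a_1,\dots,a_n\})$ onto a $\mathbb U$-definable subset of $O$. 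For each exceptional $a_i$, $1$-analysability supplies a parameter $c_i$ witnessing internality of $S_{a_i}$ over $\mathbb U$. Taking $\{a_1,\dots,a_n,c_1,\dots,c_n\}$ as parameters realises all of $S$ as definably isomorphic to a $\mathbb U$-definable set, giving internality of $C_A(\mathcal G)$.

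The only subtlety I anticipate is in the ``only if'' direction, in verifying that an arbitrary $\sigma_a\in\Aut(S_a/\mathbb U)$ really does extend via identity on every other fibre to an element of $\Aut(C_A(\mathcal G)/\mathbb U)$; but this is precisely the content of independence of fibres, and it is exactly this step that explains why the conclusion is formulated fibrewise.
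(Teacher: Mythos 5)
Your proof is correct, but it takes a genuinely different route from the paper's. Both arguments turn on the same two ingredients from Theorem~\ref{relative-cover-construction} -- the decomposition $\Aut\big(C_A(\mathcal G)/\mathbb U\big)=\prod_{a\in A}\Aut(S_a/\mathbb U)$ and the identification $\Aut(S_a/\mathbb U)\cong\Aut_{\mathcal G'_a}(S_a)\cong\Aut_{\mathcal G_a}(O)$ -- but they translate internality differently. The paper uses a cardinality criterion: a cover in a finitely generated language is internal if and only if $|\Aut(\mathbb U'/\mathbb U)|\leq|\mathbb U|$ (the non-internal case producing $2^{|\mathbb U|}$ automorphisms via a back-and-forth tree of partial elementary maps), and then invokes the dichotomy that the definable set $\{a\in A:\Aut_{\mathcal G'_a}(S_a)\neq 1\}$ is either finite or of the cardinality of the universe, the product being of size at most $|\mathbb U|$ in the first case and exactly $2^{|\mathbb U|}$ in the second. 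You instead argue directly on both sides: for the forward direction you exploit the fact that the internality witness is a single definable bijection, so its parameter tuple $c$ is finite and meets only finitely many fibres, and independence of fibres then forces $\Aut(S_a/\mathbb U)=1$ for every other $a$, since the identity-elsewhere extension of any $\sigma_a$ fixes $c$ and $S\subseteq\dcl(\mathbb U,c)$; for the converse you explicitly assemble an internality witness from the singleton Hom-sets $\Hom_{\mathcal G'}(S_a,O_{\nu(a)})$ (torsors under trivial automorphism groups, with pairwise disjoint images because $\nu$ is a section) on the good fibres, together with fibrewise internality parameters on the finitely many exceptional ones, merging the pieces by elimination of imaginaries. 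Your route is more elementary and more informative -- it avoids the cardinality lemma entirely and exhibits exactly which parameters witness internality -- whereas the paper's route is shorter and isolates a characterisation of internality (the bound $|\Aut(\mathbb U'/\mathbb U)|\leq|\mathbb U|$) that is of independent interest. The only points worth making explicit in a final write-up are (i) that single-valuedness of the assembled family $(\alpha_a)$ is exactly the torsor-under-trivial-group observation, and (ii) the appeal to elimination of imaginaries when coding the finite disjoint union of images as one definable set in $\mathbb U$.
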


\begin{proof}
For any cover $\mathbb U'=(\mathbb U,S,\dots)$, in a finitely generated language, internality is equivalent to the cardinality of $\Aut(\mathbb U'/\mathbb U)$ being bounded by $|\mathbb U|$.
This characterisation seems to be well known; for example, it is suggested without proof in~\cite[Remark~1.2]{hrushovski}.
The left-to-right direction is because internality implies that $\Aut(\mathbb U'/\mathbb U)$ is interpretable in $\mathbb U'$ and hence cannot have cardinality greater than that of the universe.
For the converse one observes that if $S$ is not internal to $\mathbb U$ then for any partial elementary map $f$ with small domain $A\subseteq S$, since $S\not\subseteq\dcl(\mathbb U,A)$, $f\cup\id_{\mathbb U}$ can be extended in two distinct ways to a partial elementary map on $A\cup\{a\}$ for some $a\in S$.
We can thus build, back-and-forthly, a binary tree of partial elementary maps whose paths give rise to $2^{|\mathbb U|}$-many elements of $\Aut(\mathbb U'/\mathbb U)$.

Now, by Theorem~\ref{relative-cover-construction} we have that
$$\displaystyle \Aut\big(\mathbb C_A(\mathcal G)/\mathbb U\big)=\prod_{a\in A}\Aut(S_a/\mathbb U)=\prod_{a\in A}\Aut_{\mathcal G_a'}(S_a).$$
The set of $a\in A$ such that $\Aut_{\mathcal G_a'}(S_a)$ is not trivial is definable, and hence is either finite or of the cardinality of the universe.
In the former case $|\Aut\big(\mathbb C_A(\mathcal G)/\mathbb U\big)|\leq|\mathbb U|$ and in the latter $|\Aut\big(\mathbb C_A(\mathcal G)/\mathbb U\big)|=2^{|\mathbb U|}$.
\end{proof}

Theorems~\ref{relative-groupoid-construction} and~\ref{relative-cover-construction} relativise Hrushovki's correspondence between definable connected groupoids and internal generalised imaginary sorts.
In fact the $C_A$ and $G_A$ given by these theorems are functorial with respect to the following natural notions of morphism:
Given definable families of connected groupoids, $\mathcal G_1\to A$ and $\mathcal G_2\to A$, a {\em morphism} is simply a definable family of morphisms of definable connected groupoids $h_a:{\mathcal G_1}_a\to {\mathcal G_2}_a$ parameterised by $A$.
Given $1$-analysable covers $\mathbb U_1=(\mathbb U,S_1,\dots)$ and $\mathbb U_2=(\mathbb U,S_2,\dots)$ over $A$, a {\em morphism} between them is just a morphism of covers $g:S_1\to S_2$ such that
\begin{center}
\begin{tikzcd}
S_1\ar[rr, "g"]\ar[dr]&&S_2\ar[dl]\\
&A
\end{tikzcd}
\end{center}
commutes.
The arguments of~$\S$\ref{section-functorial} relativise so that $C_A$ and $G_A$ are functors witnessing the equivalence of the following two categories:
\begin{itemize}
\item
finitely faithful $0$-definable connected groupoids relative to~$A$, and
\item
$1$-analysable covers over $A$ with independent fibres.
\end{itemize}
We leave it to the reader to verify the details.

For a $1$-analysable cover $S\to A$ to have independent fibres is a rather restrictive condition.
One can think of it as asking that the cover $\mathbb U'=(\mathbb U, S,\dots)$ be as far from being internal to $\mathbb U$ as is consistent with the fibres of $S\to A$ being $\mathbb U$-internal.
For example, in differentially closed fields of characteristic zero, the best known non-internal $1$-analysable cover of the field of constants $C$ is the subgroup $S$ of the multiplicative group defined by the second order equation $\delta\big(\frac{\delta x}{x}\big)=0$, with $S\to C$ being given by the logarithmic derivative $x\mapsto\frac{\delta x}{x}$.
But the fibres of $S\to C$ are not independent: $S$ admits a definable group structure (field multiplication) that intertwines the fibres.
Ruizhang Jin has, however, communicated to us the following positive example in differentially closed fields, details of which will appear in his PhD thesis.

\begin{example}[Jin]
Suppose $\mathbb M=(K,0,1,+,-,\times,-,\delta)$ is a saturated differentially closed field in characteristic zero, $\mathbb U=(C,0,1,+,-,\times)$ is its field of constants, and $t\in K$ is a fixed element with $\delta t=1$ that we will use as a parameter.
Consider the $t$-definable set,
$$S:=\{x\in K:\frac{\delta x}{x}=\frac{1}{(t+c)^2}\text{ for some }c\in\mathbb U\}$$
and the $t$-definable map $\pi:S\to C$ given by $\pi(x)=\delta\big(\frac{x}{2\delta x})-t$.
Then the fibre above $c\in C$ is $S_c=\{x:\frac{\delta x}{x}=\frac{1}{(t+c)^2}\}$, which is a multiplicative translate of the multiplicative group of constants, and hence is internal to $\mathbb U$.
But, as Jin has worked out, any finite set of elements of $S$ coming from different fibres will be $\operatorname{acl}$-independent over $C(t)$.
This can be used to show that the $t$-definable structure induced on $(C,S)$ is a $1$-analysable cover over $C$ with independent fibres.
\end{example}

The question remains as to what definable object intrinsic to $\mathbb U$ captures the data of a $1$-analysable cover $\mathbb U'=(\mathbb U, S,...)$ over $A$ whose fibres may not be independent.
Probably one should make the additional assumption that $(\mathbb U,S_a)$ is stably embedded in $\mathbb U'$ for each $a\in A$.
Hrushovski~\cite{hrushovski} suggests ``definable simplicial groupoids".
It would be of interest to realise this suggestion.

\bigskip

\bibliographystyle{plain}


\end{document}